\setlist[itemize]{leftmargin=*}
\setlist[enumerate]{leftmargin=*}
\def\co{\colon\thinspace}
\newcommand{\Q}{\mathbb{Q}}
\newcommand{\C}{\mathbb{C}}
\newcommand{\Z}{\mathbb{Z}}
\newcommand{\N}{\mathbb{N}}
\newcommand{\R}{\mathbb{R}}
\renewcommand{\l}{\vartheta}
\newcommand{\bd}{\partial}
\newcommand{\M}{\mathcal{M}}
\newcommand{\Symp}{\text{\it Symp}}
\newcolumntype{C}[1]{>{\centering\arraybackslash$}p{#1}<{$}}
\renewcommand{\k}{\mathbb{K}}
\newcommand{\x}{\times}
\newcommand{\CO}{\mathcal{{C}{O}}}
\newcommand{\OC}{\mathcal{{O}{C}}}
\newcommand{\CC}{\mathcal{C}}
\newcommand{\ev}{\mathrm{ev}}
\newcommand{\RP}{\R P}
\newcommand{\CP}{\C P}
\newcommand{\Cl}{\mathit{Cl}}
\newcommand{\clow}{_{\mathit{low}}^{(2)}}
\newcommand{\low}{{\mathit{low}}}
\newcommand{\pt}{\mathrm{pt}}
\newcommand{\PxP}{\CP^1 \times \CP^1}
\newcommand{\ad}{\mathrm{ad}}
\newcommand{\Sa}{S^2_{\ad}}
\newcommand{\PO}{\mathfrak{PO}}
\newcommand{\bb}{\mathfrak{b}}
\newcommand{\T}{\hat{T}}
\newcommand{\hgamma}{\hat{\gamma}}
\newcommand{\homega}{\hat{\omega}}
\newcommand{\cH}{H}
\newcommand{\hF}{\hat F}
\newcommand{\hG}{\hat G}
\renewcommand{\c}{^{(2)}}
\newcommand{\J}{\mathcal{J}}
\newcommand{\BlI}{\mathbb{C}P^2\# \overline{\mathbb{C}P^2}}
\newcommand{\BlIII}{\mathbb{C}P^2\# 3\overline{\mathbb{C}P^2}}
\newcommand{\del}{\partial}
\newtheorem{theorem}{Theorem}[section]
\newtheorem*{theorem*}{Theorem}
\newtheorem{proposition}[theorem]{Proposition}
\newtheorem{lemma}[theorem]{Lemma}
\newtheorem{conjecture}[theorem]{Conjecture}
\theoremstyle{definition}
\newtheorem{definition}[theorem]{Definition}
\theoremstyle{remark}
\newtheorem{remark}{Remark}[section]
\numberwithin{equation}{section}
\begin{document}

\title[Low-area Floer theory]{Low-area Floer theory and non-displaceability}
\author{Dmitry Tonkonog}
\email{dt385@cam.ac.uk / dtonkonog@gmail.com }
\address{Department of Pure Mathematics and Mathematical Statistics,
University of Cambridge,
Wilberforce Road,
Cambridge
CB3 0WB, UK}
\author{Renato Vianna}
\email{r.vianna@dpmms.cam.ac.uk}
\address{Department of Pure Mathematics and Mathematical Statistics,
University of Cambridge,
Wilberforce Road,
Cambridge
CB3 0WB, UK}

\begin{abstract} We introduce a new version of Floer theory of a non-monotone
	Lagrangian submanifold which only uses least area holomorphic disks with boundary on
	it. We use this theory to prove non-displaceability theorems about continuous families of Lagrangian tori in the complex projective plane and other del Pezzo surfaces.
\end{abstract}

\maketitle



\section{Introduction}

In this paper, we introduce a new non-displaceability criterion for Lagrangian
submanifolds of a given compact symplectic (four-)manifold. The criterion is
based on a Hamiltonian isotopy invariant of a Lagrangian
submanifold constructed using $J$-holomorphic disks \emph{of lowest area}
with boundary on the Lagrangian. We apply our criterion to prove
non-displaceability results for Lagrangian tori in $\CP^2$ and in other del
Pezzo surfaces, for which there is no clear alternative proof using
standard results in Floer theory.
\label{sec:intro}
We shall focus on dimension~four in line with our intended applications, although we do have a clear vision of a higher-dimensional setup to which our methods generalise---it is mentioned briefly in Subsection~\ref{subsec:higher_dim}.

\subsection{Challenges in Lagrangian rigidity} A classical question in symplectic
topology, originating from Arnold's conjectures and still inspiring numerous
advances in the field, is to understand whether two given Lagrangian
submanifolds $L_1$, $L_2$ are (Hamiltonian) non-displaceable, meaning that there
exists no Hamiltonian diffeomorphism that would map $L_1$ to a Lagrangian
disjoint from $L_2$. It is sometimes referred to as the {\it Lagrangian rigidity} 
problem, and the main tool to approach it is Floer theory. 
Historically, most applications of Floer theory were focused on monotone
(or exact) Lagrangians, as in those cases it is foundationally easier to set up,
and usually easier to compute. 

More recent developments have given access to non-displaceability results
concerning non-monotone Lagrangians. One of such developments is called Floer
cohomology with bulk deformations, introduced by Fukaya, Oh, Ohta and Ono
\cite{FO3Book,FO311a}. Using bulk deformations, the same authors \cite{FO312}
found a {\it continuous} family of non-displaceable
Lagrangian tori $\T_a$ in $\CP^1\times \CP^1$, indexed by $a\in(0,1/2]$.
(When we say that a Lagrangian is
non-displaceable, we mean that it is non-displaceable from itself.) For some
other recent methods, see \cite{AM13, Bor13,Wo11}.

\begin{remark}
	
 To be able to observe such ``rigidity for families'' phenomena, it is essential
 to consider non-monotone Lagrangian submanifolds, as spaces of monotone ones up
 to Hamiltonian isotopy are discrete, on compact symplectic manifolds.
 \end{remark}

It is easy to produce challenging instances of the displaceability problem which
known tools fail to answer. For example, consider the 2:1
cover $\CP^1\times \CP^1\to \CP^2$ branched along a conic curve. Taking the
images of the above mentioned tori under this cover, we get a
family of Lagrangian tori in the complex projective plane denoted by $T_a\subset
\CP^2$ and indexed by $a\in(0,1/2]$ --- see Section \ref{sec:T_a} and \cite[Section~3]{Wu15} for details.
It turns out that the tori $T_a \subset \CP^2$ have
trivial bulk-deformed Floer cohomology for any bulk class $\bb \in H^2(\CP^2,
\Lambda_0)$, as we check in Proposition~\ref{prop: Bulk CP^2}. While one can
show that the tori $T_a$ are displaceable when $a>1/3$, the following remains to
be a conjecture.

\begin{conjecture} 
	\label{con:T_a}	
	For each $a\in (0,1/3]$, the Lagrangian torus $T_a\subset
	\CP^2$ is Hamiltonian non-displaceable. 
\end{conjecture}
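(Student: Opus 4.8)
The plan is to split the parameter range at the monotone value $a=1/3$. The torus $T_{1/3}\subset\CP^2$ is the monotone member of the family --- which is why the conjectured range closes there --- and it is non-displaceable already by classical monotone Floer theory, being Hamiltonian isotopic to the Chekanov torus, whose self-Floer cohomology is nonzero for a suitable rank-one local system (see Section~\ref{sec:T_a} and \cite{Wu15}). For $a\in(0,1/3)$ the torus is genuinely non-monotone, and Proposition~\ref{prop: Bulk CP^2} shows its bulk-deformed Floer cohomology vanishes for every bulk class --- in particular its ordinary self-Floer cohomology vanishes for every rank-one local system --- so the \cite{FO312}-type approach is unavailable. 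This is precisely the regime in which the low-area invariant $HF\clow$ is meant to be applied.

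The next step is to describe the $J$-holomorphic disks of least area bounded by $T_a$ using the branched double cover $p\co\CP^1\times\CP^1\to\CP^2$ and the identity $T_a=p(\T_a)$. By positivity of intersection with the branch conic, a Maslov-$2$ disk on $T_a$ either meets the branch conic, and then has area bounded below by a positive constant independent of $a$ (on the order of the area of a line in $\CP^2$), or avoids it, and is then the $p$-image of one of the small disks bounded by $\T_a$, whose area is an affine function of $a$ tending to $0$ as $a\to 0$. For small $a$ this opens a wide gap in the area spectrum. Truncating the disk potential of $T_a$ at a threshold $\lambda$ inside this gap retains only the small disks and produces a low-area potential $W\c_{T_a}$ over the quotient Novikov ring $\Lambda_0/T^{\lambda}\Lambda_0$; I expect this truncation to admit a nondegenerate critical point, equivalently $HF\clow(T_a)\ne 0$ after twisting by a suitable local system. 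It is essential here that having a critical point modulo $T^{\lambda}$ is strictly weaker than having one over $\Lambda_0$, which by Proposition~\ref{prop: Bulk CP^2} does not exist even after bulk deformation. Given this input, the low-area non-displaceability criterion yields the conclusion for those $a$ to which it applies.

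The main obstacle is making this last step yield \emph{unconditional} non-displaceability, and it has two sources. Analytically, one must show the truncated theory is well defined and invariant under small Hamiltonian isotopies: $d^2\equiv 0 \pmod{T^{\lambda}}$ is automatic once the untruncated differential is defined, but the well-definedness of the count and the existence of truncation-compatible continuation maps require Gromov compactness together with control excluding small-area Maslov-$0$ disks and sphere bubbles for this specific family. More seriously, $HF\clow(T_a)$ as above only obstructs displacement by Hamiltonian isotopies whose Hofer energy is below the gap threshold $\lambda$; to conclude non-displaceability outright one needs $\lambda$ larger than the displacement energy of $T_a$, so the crux is an a priori upper bound on $e(T_a)$ that stays below the gap across the \emph{entire} interval $(0,1/3)$. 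I would expect a clean argument for $a$ in some sub-range where this quantitative balance is transparent; covering the remaining parameters --- where the comparison with $\CP^1\times\CP^1$ is lossy and the energy-versus-gap competition is delicate --- is what keeps the full statement a conjecture.
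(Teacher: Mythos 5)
There is nothing in the paper to compare your argument against: the statement you are proving is Conjecture~\ref{con:T_a}, which the paper explicitly leaves open. What the paper actually proves is the strictly weaker Theorem~\ref{th:T_a} (non-displaceability of $T_a$ from the monotone Clifford torus, and only for $a\le 1/9$), via the pairing criterion of Theorem~\ref{th:CO} applied to $\OC\clow([p_{T_a}])=4\cH$ and $\OC\c([p_{T_\Cl}])=\cH$ over $\Z/8$; and Remark~\ref{rem:not_works_for_conj} records precisely why the authors' own machinery cannot reach the conjecture: the self-pairing $\OC\clow([p_{T_a}])\cdot\OC\clow([p_{T_a}])=16\equiv 0 \bmod 8$. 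So a correct review of your proposal must be measured against the standard of a complete proof of an open problem, not against a proof in the paper.

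Judged that way, your proposal is a program, not a proof, and you have correctly located its fatal gap yourself. The monotone endpoint $a=1/3$ is fine (the Chekanov-type potential \eqref{eq:Pot_CP2} has critical points at $w=1$, $z^3=8$ over a characteristic-zero field, so classical monotone theory applies), and the area-gap picture for small $a$ is consistent with Table~\ref{tab: Disks} and Lemma~\ref{lem:disks_in_nbhood}(i). But the central mechanism --- a potential or Floer theory truncated at an area/energy threshold $\lambda$, i.e.\ defined over $\Lambda_0/T^{\lambda}\Lambda_0$ --- can only obstruct displacement by Hamiltonian isotopies of Hofer energy below (roughly) $\lambda$, because invariance of the truncated theory is exactly what fails once the continuation maps cost more energy than the truncation retains. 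Since there is no a priori upper bound on the displacement energy of a non-monotone $T_a$ (bounding it is essentially equivalent to the rigidity one is trying to prove), this route cannot yield \emph{unconditional} non-displaceability for any $a<1/3$, independently of how nice the critical points of the truncated potential \eqref{eq:trunc_po} are; note also that the paper's Proposition~\ref{prop: Bulk CP^2} and Remark~\ref{rem:not_works_over_Z} show the full (bulk-deformed or local-system-twisted) invariants genuinely vanish, so no limit of the truncated theory can rescue the argument. This is exactly why the paper's low-area invariant is packaged as an intersection-number criterion for a \emph{pair} of Lagrangians (where the energy bookkeeping is replaced by the hypothesis $a+b<\min(A,B)$ on a compact moduli space of annuli), and why the self-non-displaceability of $T_a$ remains a conjecture. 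Your attempt is honest about this, but as written it does not prove the statement, and the sub-range you hope to cover ``where the quantitative balance is transparent'' does not exist for the reason above.
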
	

Motivated by this and similar problems, we introduce a new approach, called
low-area Floer theory, to solve rigidity problems concerning some non-monotone
Lagrangians. 
Let us list some application of this technique.

\begin{theorem}
	\label{th:T_a}
	For each $a\in (0,1/9]$, the torus $T_a\subset \CP^2$ is Hamiltonian
	non-displaceable from the monotone Clifford torus $T_\Cl\subset \CP^2$.
\end{theorem}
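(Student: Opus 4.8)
\emph{Proof plan.} My plan is to attach to the pair $(T_a,T_\Cl)$ a low-area Floer complex $CF_\low(T_a,T_\Cl)$ — generated, after a $C^0$-small Hamiltonian perturbation, by the intersection points, with a differential counting only holomorphic strips of symplectic area below a threshold $\hbar=\hbar(a)$ and deformed by a bounding cochain built out of the distinguished least-area disks bounded by $T_a$ — and to show that its cohomology $HF_\low(T_a,T_\Cl)$ is (i) computable and nonzero for $a\le 1/9$, and (ii) forced to vanish as soon as $T_a$ is displaced off $T_\Cl$. The tension the argument must resolve is that such a truncated theory is not a Hamiltonian isotopy invariant in general: the threshold $\hbar$ has to sit strictly above the (small) area of the distinguished disks of $T_a$ and strictly below the areas of every ``genuine'' disk in sight — those of $T_\Cl$ and all the non-small disk classes of $T_a$ — which in $\CP^2$, normalising the line class to area $1$, are all $\ge 1/3$. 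Only when $\hbar$ can be inserted in this window is $CF_\low$ well defined (its square picks up only genuine disk bubbles, which are above threshold) and does the comparison below work.

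For the nonvanishing in (i) I would use the description of $T_a$ from Section~\ref{sec:T_a} as the image of the Fukaya--Oh--Ohta--Ono torus $\T_a\subset\CP^1\times\CP^1$ under the double cover $\CP^1\times\CP^1\to\CP^2$ branched over a conic. Under this cover each ruling $\CP^1$ maps isomorphically onto a line, so areas transport directly, and one reads off from the known disks on $\T_a$ that $T_a$ bounds a distinguished family of Maslov-$2$ disks of area $a$ (with $a\in(0,1/2]$ the FOOO parameter, in line units), while every other disk class on $T_a$, and every disk class on $T_\Cl$, has area bounded below by a constant independent of $a$ — in the Clifford case exactly $1/3$. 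Below $\hbar$ the only disks that enter are these small ones, so the low-area $A_\infty$ package of $T_a$ is a low-order deformation of the monotone Floer theory of a Clifford-type torus in $\CP^2$; pairing it against the genuinely monotone $T_\Cl$ and choosing the rank-one local system on $T_a$ to match one of the three nondegenerate critical points of the Clifford superpotential $z_1+z_2+(z_1z_2)^{-1}$ over the Novikov field yields $HF_\low(T_a,T_\Cl)\ne 0$. The hypothesis $a\le 1/9$ is precisely what leaves room for $\hbar$: the well-definedness of $CF_\low$ and the area control on the continuation maps below involve configurations with up to three of the small disks — reflecting the relation $z_1z_2z_3=T^{1}$ at the heart of the Clifford-type structure of $\CP^2$ — so one needs $3a\le 1/3$.

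For (ii), suppose $\phi\in\Ham(\CP^2)$ satisfies $\phi(T_a)\cap T_\Cl=\emptyset$. Running the low-area continuation argument along a path from $\id$ to $\phi$, a continuation strip can fail to contribute to the chain map only by bubbling off a holomorphic disk on $T_a$ or on $T_\Cl$; the small disks of $T_a$ are already absorbed into the bounding cochain, so such a bubble is a genuine one, of area $\ge 1/3 > \hbar$, and is excluded. Hence the continuation maps are chain homotopy equivalences below the threshold and $HF_\low(T_a,T_\Cl)\cong HF_\low(\phi(T_a),T_\Cl)$; but the latter complex has no generators, contradicting (i).

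The main obstacle is making (ii) honest. Because low-area Floer cohomology is genuinely not invariant under arbitrary Hamiltonian isotopies, one must show that the displacing isotopy — whose Hofer length one does not control a priori — can be organised so that every moduli space of continuation and homotopy strips that matters stays below the disk-area threshold $1/3$ while still interacting correctly with the small disks of $T_a$; it is this quantitative bookkeeping, in particular handling iterated continuation configurations and ruling out low-area sphere bubbles and non-generic disk degenerations pushed forward from the branched cover, that consumes the gap between the naive $a\le 1/3$ and the stated $a\le 1/9$. A secondary point to be careful about: by Proposition~\ref{prop: Bulk CP^2} every bulk-deformed Floer cohomology of $T_a$ vanishes, so one must be sure the truncated complex is not itself acyclic — keeping $\hbar$ below $1/3$, so that only the small $T_a$-disks contribute, is exactly what prevents this collapse.
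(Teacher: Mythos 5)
There are two genuine gaps, and they sit exactly at the points your plan flags as ``to be made honest.'' First, the nonvanishing step (i) is based on a misidentification of the low-area disk geometry of $T_a$. The area-$a$ Maslov index~2 disks on $T_a$ are the Chekanov-type classes $\cH-2\beta\pm\alpha$, $\cH-2\beta$ (Table~\ref{tab: Disks}), so the truncated potential is $t^a(1+w)^2/(z^2w)$, not a small deformation of the Clifford potential $z_1+z_2+(z_1z_2)^{-1}$; there is no way to ``match one of the three nondegenerate Clifford critical points'' with a rank-one local system on $T_a$ over the Novikov field. Over a characteristic-zero field the only way to cancel the boundary sum $-8\,\bd\beta$ of the low-area disks is the local system $w=-1$, and then the associated invariant vanishes because the $\cH$-classes cancel (Remark~\ref{rem:not_works_over_Z}); Proposition~\ref{prop: Bulk CP^2} shows that even bulk deformations do not rescue a field-coefficient theory. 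The paper's proof only works because it uses the \emph{trivial} local system and $Q=\Z/8$ coefficients, where $\sum_i\bd[D_i]=-8\,\bd\beta=0$ and $\OC\clow([p_{T_a}])\cdot\OC([p_{T_\Cl}])=4\cH\cdot\cH=4\neq 0 \bmod 8$ — a phenomenon with no analogue in a Novikov-field bounding-cochain framework, so your complex $CF_\low(T_a,T_\Cl)$ as described would be acyclic (or not even well defined, since the obstruction terms of the two tori live at different valuations and cannot agree).

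Second, step (ii) is not just ``quantitative bookkeeping'' left to do: a threshold-truncated Lagrangian intersection theory is simply not invariant under an arbitrary displacing Hamiltonian, and the continuation strips for a displacing isotopy of unknown Hofer energy can cross the area threshold, so the claimed chain homotopy equivalence has no justification and no mechanism is offered to ``organise'' the isotopy. The paper avoids strips and continuation maps altogether: the only Hamiltonian-isotopy invariant it uses is the least-area disk count $\OC\clow$ (Lemma~\ref{lem:OC_invt}, proved by a bubbling analysis that needs no energy control), and non-displaceability comes from a one-parameter moduli space of annuli with boundary on $T_a$ and $T_\Cl$, whose $R=0$ end is empty when the tori are disjoint and whose $R=+\infty$ end computes $\OC\clow([p_{T_a}])\cdot\OC([p_{T_\Cl}])$; the constraint $a+b<A$, i.e. $a+\tfrac13<\tfrac{1-a}2$, is what rules out bubbling of the area-$\tfrac{1-a}2$ disk off these annuli and yields $a<1/9$ (your ``three small disks'' count lands on the same number only coincidentally). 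To repair your approach you would essentially have to replace both pillars — the field-coefficient critical-point matching and the continuation-map invariance — by the paper's $\Z/8$ string invariant and annulus cobordism.
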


\begin{remark}
	An interesting detail of the proof, originating from Lemma~\ref{lem:cotangent_bundles}(ii), is that we use $\Z/8$ coefficients for our Floer-theoretic invariants,
	and it is impossible to use a field, or the group $\Z$, instead. To place this into context, recall
	that conventional Floer cohomology over finite fields can detect non-displaceable
	monotone Lagrangians unseen by characteristic zero fields: the simplest example
	is $\RP^n\subset \CP^n$, see e.g.~\cite{FO314}; a more sophisticated example,
	where the characteristic of the field to take is not so obvious, is the Chiang 
	Lagrangian studied by Evans and~Lekili \cite{EL15b}, see also J.~Smith~\cite{Smi15}. 
However, there are no examples in conventional Floer
	theory that would require working over a torsion group which is not a field.  

\end{remark}

We can also show analogous results for some other del Pezzo surfaces. They are summarised below, and the precise formulations are contained in Theorems~\ref{th: Ta in PxP},~\ref{th: Ta in BlIII}. 

\begin{theorem}
In $\C P^1\times \C P^1$ and in $\BlIII$ with a monotone symplectic form, there exists a one-parametric family of Lagrangian tori which are non-displaceable from the standard monotone toric fibre.
\end{theorem}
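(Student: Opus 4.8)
The plan is to establish the two precise statements --- Theorems~\ref{th: Ta in PxP} and~\ref{th: Ta in BlIII} --- by the same mechanism used for Theorem~\ref{th:T_a}. First one pins down the families and their monotone reference fibres. In $\CP^1\times\CP^1$ the tori are the Fukaya--Oh--Ohta--Ono tori $\T_a$, $a\in(0,1/2]$, of \cite{FO312}: the member $\T_{1/2}$ is the standard monotone toric fibre (the product of the two equators), while for $a\in(0,1/2)$ the $\T_a$ are non-monotone and pairwise non-Hamiltonian-isotopic, so they genuinely form a one-parameter family. In $\BlIII$ one exhibits the analogous picture starting from the monotone toric model: from the monotone toric fibre $L_0$ one produces a one-parameter family $L_a$ of non-monotone Lagrangian tori by a construction parallel to the one underlying the $T_a$ of Section~\ref{sec:T_a}, so that, as in the $\CP^2$ case, each $L_a$ bounds a controlled collection of $J$-holomorphic disks whose least-area members can be listed from the moment-polytope data together with their symplectic areas.

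Given the families, the argument for a fixed (small) $a$ runs as follows. Form the low-area Floer complex of the pair $(L_a,L_0)$ over the same coefficient ring as in Theorem~\ref{th:T_a} --- a torsion ring such as $\Z/8$, forced again by the local contributions near the Lagrangians described in Lemma~\ref{lem:cotangent_bundles}(ii), which are inconsistent over any field or over $\Z$. Then enumerate the lowest-area holomorphic strips joining $L_a$ to $L_0$, assemble the corresponding low-area differential, and check that the area hierarchy separates these strips from the disk bubbling and higher-energy configurations that the truncated theory discards --- so that $HF_{\low}(L_a,L_0)$ is well defined and is a Hamiltonian-isotopy invariant on the relevant window of areas. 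Finally show that $HF_{\low}(L_a,L_0)\neq 0$; this forces $L_a\cap\psi(L_0)\neq\emptyset$ for every Hamiltonian diffeomorphism $\psi$, and running the argument for all $a$ in the appropriate subinterval yields the one-parameter family claimed in the statement.

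The step I expect to be the main obstacle is the enumeration of low-area strips and disks in $\BlIII$. For $\CP^1\times\CP^1$ one can lean on the explicit description of the $\T_a$ in \cite{FO312} and, where convenient, exploit the $\Z/2$-branched cover $\CP^1\times\CP^1\to\CP^2$ of Section~\ref{sec:T_a} to import part of the disk count from the already-settled $\CP^2$ case. The surface $\BlIII$, however, has a hexagonal moment polytope with more boundary divisors than the square of $\CP^1\times\CP^1$, hence more candidate low-area disk classes; eliminating the unwanted ones and getting the signs and multiplicities right over $\Z/8$ is where the real work lies. A secondary, ever-present difficulty --- shared with the $\CP^2$ argument, where it surfaced as the restriction $a\in(0,1/9]$ --- is the combinatorial bookkeeping certifying that the low-area window is non-empty and stable under Hamiltonian isotopy throughout the claimed range of the parameter $a$.
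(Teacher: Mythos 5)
The core mechanism you propose---a truncated ``low-area'' Floer complex of the pair $(L_a,L_0)$ built from lowest-area holomorphic strips, whose nonvanishing forces intersection---is not the paper's mechanism, and it is itself a gap: the whole point of the paper is to avoid defining any Floer homology of a non-monotone Lagrangian (or pair), since well-definedness and Hamiltonian-isotopy invariance of such a truncated strip complex is precisely what one cannot get for free without virtual techniques or additional arguments. Theorems~\ref{th:CO}, \ref{th:CO_groups} and \ref{thm:alg_cancel} instead work with the low-area string invariants $\OC\clow$ of each torus separately (counts of Maslov index~2 disks through a point) and prove non-displaceability by a Biran--Cornea-style cobordism of annuli with boundary on both Lagrangians; you give no argument for why your $HF_{\low}(L_a,L_0)$ would be invariant, and the paper supplies none. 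There is also a factual slip: $\T_{1/2}$ is the monotone Chekanov torus, not the product of equators; the ``standard monotone toric fibre'' in the statement is the Clifford torus $T_\Cl$, a separate Lagrangian from every member of the family $\T_a$, and the theorem is about non-displaceability of $\T_a$ from $T_\Cl$.

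Even granting the paper's machinery, a direct transplant of the $\CP^2$ argument fails, and the actual proof needs two refinements your proposal does not contain. First, the coefficient ring is not $\Z/8$ (that is Lemma~\ref{lem:cotangent_bundles}(ii), specific to $T^*\RP^2$ and $\CP^2$): for $\T_a\subset\PxP$ one has $\OC\clow([p_{\T_a}])=2(\cH_1+\cH_2)$ over $\Z/4$ and $\OC\c([p_{T_\Cl}])=\cH_1+\cH_2$, so the pairing is $4\equiv 0 \bmod 4$ and Theorem~\ref{th:CO} gives nothing; the paper instead applies Theorem~\ref{th:CO_groups} over $\k=Q=\Z/2$, restricting to disks whose boundary classes lie in the affine subspaces $S_{\T_a}=\langle\partial\beta\rangle$ and $S_{T_\Cl}=\langle\partial\beta_2\rangle$, which yields $(\cH_1+\cH_2)\cdot\cH_2=1\neq 0 \bmod 2$ and the range $a+1/2<1-a$, i.e.\ $a\le 1/4$. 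Second, in $\BlIII$ the torus $\bar T_a$ bounds two extra Maslov index~2 disks of area $1/2$ (with boundaries $\pm\partial\alpha$, summing to $\cH_1+\cH_2-E_1-E_2$), so the next-to-least area is $A=1/2=b$ and the hypothesis $a+b<\min(A,B)$ fails for every $a$; the paper circumvents this with Theorem~\ref{thm:alg_cancel}, using that the two extra boundaries coincide mod~2 and hence their count cancels over $\Z/2$, which allows taking $A=1-a$ and again gives $a\le 1/4$. You correctly flag the $\BlIII$ disk enumeration as the hard point, but the resolution is this algebraic cancellation of the intermediate-area disks, not a finer count over $\Z/8$.
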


The next result exhibits a {\it two-parametric} family of non-displaceable
Lagrangian tori in symplectic del Pezzo surfaces. By a symplectic del~Pezzo
surfaces we mean \emph{monotone} symplectic 4-manifolds, whose classification follows
from a series of works \cite{MD90,MD96,Ta95,Ta96,Ta00Book,OhtaOno96,OhtaOno97}.
Recall that their list consists of blowups of $\CP^2$ at $0\le k\le 8$ points,
and of $\CP^1\times\CP^1$. By the correspondence between monotone symplectic 
4-manifolds and complex Fano surfaces we will omit the term ``symplectic'', and 
call them del Pezzo surfaces from now on.

\begin{theorem}
	\label{th:non_displ_spheres}
	Let $X$ be a del Pezzo surface and $S,S'\subset X$ be a pair of Lagrangian spheres with homological intersection $[S]\cdot[S']=1$. Then, for some $0<a_0,b_0<1/2$, there exist two families of Lagrangian tori indexed by $a,b$: 
	$$T_a,\ T'_b\subset X,\quad a\in(0,a_0),\ b\in(0,b_0),$$
	lying in a neighbourhood of the sphere $S$ resp.~$S'$, and such that $T_a$ is non-displaceable from $T_b'$ for all $a,b$ as above.  
\end{theorem}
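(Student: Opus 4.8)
The plan is to exploit the fact that a neighbourhood of a Lagrangian sphere $S\subset X$ is symplectomorphic to a disk cotangent bundle $D^*S^2$, and that $D^*S^2$ admits a standard toric picture (it is an $A_1$-Milnor fibre, or equivalently a neighbourhood of the antidiagonal sphere in $\CP^1\times\CP^1$). Inside this neighbourhood one has a family of monotone-type Lagrangian tori degenerating onto $S$, and these are exactly the tori $T_a$ with $a\to 0$; the same applies to $S'$ and $T_b'$. The essential input from low-area Floer theory is that the low-area invariant of $T_a$ depends only on the germ of the ambient symplectic manifold near $S$ together with the least-area holomorphic disks, which for small $a$ are forced to be the ``small'' disks living near $S$ inside the cotangent-bundle neighbourhood. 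Thus Lemma~\ref{lem:cotangent_bundles} (presumably computing the low-area Floer package for the tori near the zero section of $D^*S^2$, including the need for $\Z/8$ coefficients) will be the local building block.

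The key steps, in order, would be: first, fix Weinstein neighbourhoods $U\cong D^*S$ and $U'\cong D^*S'$ with $U\cap U'$ a neighbourhood of the single intersection point (using $[S]\cdot[S']=1$ and, after a Hamiltonian isotopy, arranging $S\pitchfork S'$ at one point). Second, place $T_a$ inside $U$ and $T_b'$ inside $U'$ as the standard vanishing tori, choosing $a_0,b_0$ small enough that all least-area holomorphic disks with boundary on $T_a$ (resp.\ $T_b'$) for $a<a_0$ (resp.\ $b<b_0$) are of bounded area and hence, by a Gromov-compactness/monotonicity argument, contained in $U$ (resp.\ $U'$)---this is where the ``low-area'' restriction does its work, since full Floer theory would see disks through all of $X$. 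Third, compute the low-area Floer invariants of $T_a$ and $T_b'$ from the local model via Lemma~\ref{lem:cotangent_bundles}, obtaining nontrivial invariants over $\Z/8$. Fourth, set up the low-area Floer complex $CF\low(T_a,T_b')$: since $T_a\subset U$ and $T_b'\subset U'$, their intersection points all lie in $U\cap U'$, a neighbourhood of the intersection point $S\cap S'$; the relevant low-area holomorphic strips are again confined to this region by the same area bound, so the differential and the pair-of-pants products are computed entirely in the local model. Fifth, invoke the general non-displaceability criterion of the paper (the analogue of: nontriviality of the low-area self-invariants of $T_a$ and $T_b'$, together with nonvanishing of the low-area open-closed or $CO$ map into $HF\low(T_a,T_b')$, forces non-displaceability) to conclude $T_a$ is non-displaceable from $T_b'$ for all $a<a_0$, $b<b_0$.

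The main obstacle I expect is the third-to-fourth step transition: one has to verify that the low-area Floer complex between two \emph{different} tori sitting in overlapping cotangent-bundle charts is genuinely computable from local data, i.e.\ that no ``long'' holomorphic strip connecting $T_a$ to $T_b'$ can undercut the a priori area bound. This requires a careful energy estimate: the minimal symplectic area of a strip from $T_a$ to $T_b'$ must be bounded below by a constant independent of $a,b$ (for $a,b$ small), so that for sufficiently small $a_0,b_0$ the only contributions come from the explicit small strips in $U\cap U'$. Establishing this uniform lower bound---and checking that the resulting local computation yields a nonzero class detected by the criterion over $\Z/8$ but not over a field---is the technical heart. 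A secondary subtlety is keeping the two families ``compatible'': the Hamiltonian isotopy used to make $S$ and $S'$ intersect transversally in one point must be performed before introducing the tori, and one should check the constants $a_0,b_0$ can be chosen uniformly over this isotopy, which should follow from compactness since the del Pezzo surface $X$ is closed.
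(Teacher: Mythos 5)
Your steps 1--3 track the paper's construction reasonably well (Weinstein neighbourhoods of $S$ and $S'$, the tori of Lemma~\ref{lem:cotangent_bundles} pushed in via the embedding, and confinement of the least-area disks to the neighbourhood, which the paper establishes by neck-stretching in Lemma~\ref{lem:disks_in_nbhood}(ii)). But the concluding steps 4--5 rest on an object that does not exist in this framework: a ``low-area Floer complex $CF(T_a,T_b')$'' between the two non-monotone tori, with a differential counting strips localized in $U\cap U'$. The paper's criterion (Theorem~\ref{th:CO_groups}) never forms a Floer complex for the pair; it takes the homological intersection number in $H_2(X)$ of the two low-area string invariants $\OC\clow([p_{T_a}],S_{T_a})$ and $\OC\clow([p_{T'_b}],S_{T'_b})$, and its proof is a cobordism argument on a moduli space of annuli with one boundary on each torus, run \emph{after} assuming a displacing Hamiltonian exists. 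This matters because your plan is circular in exactly the place where non-monotone Floer theory breaks down: for small $a,b$ the tori $T_a$ and $T'_b$ can simply be disjoint, so a strip complex has no generators, and to extract non-displaceability from any local computation you would need an invariance-under-Hamiltonian-isotopy statement for this complex --- which is precisely what is unavailable for non-monotone Lagrangians and what the low-area annulus argument is designed to circumvent. Your ``technical heart'' (a uniform lower bound on the area of long strips) is therefore aimed at the wrong quantity; the hypothesis that actually has to be verified is the area inequality $a+b<\min(A,B)$ of Theorems~\ref{th:CO}/\ref{th:CO_groups}, which the paper checks from Lemma~\ref{lem:disks_in_nbhood}(i) with $N=1$ and $k\in\{1,2,3\}$, giving the constraint $a,b<1/(k+1)$. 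Also, the geometric arrangement $S\pitchfork S'$ in one point and the overlap $U\cap U'$ are irrelevant: only the homological number $[S]\cdot[S']=1$ enters.

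There is also a concrete error in the invariants themselves. For the sphere case the local model is $T^*S^2$ (Lemma~\ref{lem:cotangent_bundles}(i)), whose plain invariant is $2[S^2]$ over $\Z/4$, not an invariant over $\Z/8$ (the $\Z/8$ computation is part (ii), for $T^*\RP^2$, used for the $T_a\subset\CP^2$ versus Clifford torus theorem). If you use the plain invariant you get $\OC\clow([p_{T_a}])\cdot\OC\clow([p_{T'_b}])=2[S]\cdot 2[S']=4\equiv 0 \bmod 4$, so the unrefined criterion fails; the paper's proof needs the refined invariant attached to the affine subspace $S_{\hat L_a}\subset H_1(\hat L_a;\Z/2)$, which yields $\OC\clow([p_{T_a}],S_{T_a})=[S]$ and $\OC\clow([p_{T'_b}],S_{T'_b})=[S']$ over $\k=Q=\Z/2$ and hence the nonzero product $[S]\cdot[S']=1$. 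Your proposal neither selects this subspace nor notices that the unrefined count dies, so even granting a correct criterion the computation as described would not close.
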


In our construction, any two different tori in the same family $\{T_a\}$ will be disjoint, and the same will hold for the $\{T_b'\}$.

 Recall that pairs of once-intersecting Lagrangian spheres exist inside
 $k$-blowups of $\CP^2$ when $k\ge 3$. For example, one can take Lagrangian
 spheres with homology classes $[E_i]-[E_j]$ and $[E_j]-[E_k]$ where
 $\{E_i,E_j,E_k\}$ are three distinct exceptional divisors, as explained in \cite{SeiL4D,Ev10}.
 These spheres
 can also be seen from the almost toric perspective \cite{Vi16}: on an
 almost toric base diagram for the corresponding symplectic del Pezzo surface,
 these Lagrangian spheres projects onto the segment connecting two nodes on the same monodromy line. For homology reasons, blowups of $\CP^2$ at 2 or less points do not contain such pairs of spheres.

\subsection{Lagrangian rigidity from low-area Floer theory} \label{subsec: DefOC_low}
We call a symplectic manifold $X$ monotone if $\omega$ and $c_1(X)$ give positively proportional classes in $H^2(X;\R)$.
Similarly, a Lagrangian is called monotone if the symplectic area class and the Maslov class of $L$ are positively proportional in $H^2(X,L;\R)$. It is quite common to use a definition where the proportionality is only required over $\pi_2(X)$ or $\pi_2(X,L)$; we stick to the homological version for convenience.

Floer theory for monotone Lagrangians has abundant algebraic structure, a
particular example of which are the open-closed and closed-open {\it string
maps}. There is a non-displace\-ability criterion for a pair of monotone
Lagrangians formulated in terms of these string maps; it is due to Biran and
Cornea and will be recalled later. Our main finding can be summarised as
follows: it is possible define a low-area version of the string maps for {\it
non-monotone} Lagrangians, and prove a version of Biran-Cornea's theorem under
an additional assumption on the areas of the disks involved. This method can
prove non-displaceability in examples having no clear alternative proof by means
of conventional Floer theory for non-monotone Lagrangians. We shall focus on
dimension 4, and proceed to a precise statement of our theorem.

Fix a ring $Q$ of coefficients; it will be used for all (co)homologies when the
coefficients are omitted. (The coefficient ring does not have to include a
Novikov parameter in the way it is done in classical Floer theory for
non-monotone manifolds; rings like $\Z/k\Z$ are good enough for our purpose.)
Let $L,K\subset X$ be two orientable, not necessarily monotone, Lagrangian
surfaces in a compact symplectic four-manifold $X$. 

Denote
\begin{equation}
	\label{eq:defn_a_dim4}
	a=\min\{\omega(u)>0\ | \ u\in H_2(X,L;\Z),\ \mu(u)=2\},
\end{equation}
assuming this minimum exists. This is the least positive area of topological Maslov index~2 cycles with boundary on $L$. (For example, we currently do not allow the above set of areas to have infimum equal to $0$.)
Also, denote by $A$ the next-to-the-least such area:
\begin{equation} \label{eq: def A}
	A = \min \{ \omega(u) > a \ |\  u\in H_2(X,L;\Z),\ \mu(u)=2\}.
\end{equation}
We assume that the minimum exists, including the possibility $A = +\infty = \min \emptyset$; the latter is the case when $L$ is monotone.  

Fix a tame almost complex structure $J$ and a point $p_L\in L$. Let
$\{D_i^L\}_i\subset (X,L)$ be the images of all $J$-holomorphic Maslov index~2
disks of area $a$ such that $p_L\in \bd D_i^L$ and \emph{whose boundary is
non-zero in $H_1(L;\Z)$} (their number is finite, by Gromov compactness \cite{Gr85}). 
Assume that 
\begin{equation}
\label{eq:disk_low_area_cancel_bdies} \sum_i \bd [D_i^L]=0\in H_1(L)
\end{equation} 
and the disks are regular with respect to $J$. Recall that by convention, the above
equality needs to hold over the chosen ring $Q$. Then let 
$$ 
\OC\clow([p_L])\in H_2(X) 
$$ 
be any element whose image under the map $ H_2(X)\to H_2(X,L) $ equals
$\sum_i [D^L_i]$. We call this class the {\it low-area string invariant of $L$.} 
Observe that it is defined only up to the image
$$
H_2(L)\to H_2(X),
$$
i.e.~up to $[L]\in H_2(X)$,  compare Remark \ref{rem:OC_def_upto}. In the cases of interest, we will have $[L]=0$; but regardless of this, we prove in Lemma~\ref{lem:OC_invt} that 
$\OC\clow([p_L])$ is independent of the choices we made, up to $[L]$.
Finally, consider $K$ instead of $L$ and define the numbers $b$ and $B$ analogously
to $a$ and $A$, respectively. Let $p_K$ be a point on $K$.

\begin{theorem}
	\label{th:CO}
	Assume that Condition~(\ref{eq:disk_low_area_cancel_bdies}) holds for $L$ and $K$, 
	the minima $a$, $A$, $b$, $B$ exist, and $[L]=[K]=0$. Suppose that $a+b<\min(A,B)$
	and the homological intersection number below is non-zero over $Q$: $$\OC\clow([p_L])\cdot \OC\clow([p_K])\neq 0.$$ Then $L$ and $K$ are Hamiltonian non-displaceable from each other.
\end{theorem}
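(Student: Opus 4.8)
The plan is to argue by contradiction, reduce to the case $L\cap K=\emptyset$, and then show the hypothesised intersection number must vanish by realising it as the full signed boundary of a compact $1$-dimensional moduli space of holomorphic annuli with one boundary circle on $L$ and one on $K$.

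First, suppose a Hamiltonian diffeomorphism $\psi$ satisfies $\psi(L)\cap K=\emptyset$. Since $\psi$ preserves $\omega$-areas of relative classes and acts trivially on $H_2(X)$, the numbers $a,A$ of $\psi(L)$ equal those of $L$, Condition~\eqref{eq:disk_low_area_cancel_bdies} holds for $\psi(L)$, $[\psi(L)]=0$, and by Lemma~\ref{lem:OC_invt} we have $\OC\clow([p_{\psi(L)}])=\OC\clow([p_L])$ in $H_2(X)$; hence replacing $(L,p_L)$ by $(\psi(L),\psi(p_L))$ changes none of the relevant data, and we may assume $L\cap K=\emptyset$. It now suffices to prove $\OC\clow([p_L])\cdot\OC\clow([p_K])=0$. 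Fix a tame $J$ for which the least-area disks $\{D^L_i\}$ and $\{D^K_j\}$ are regular and isolated. By \eqref{eq:disk_low_area_cancel_bdies} and $[L]=[K]=0$, choose $2$-chains $C_L\subset L$, $C_K\subset K$ with $\partial C_L=\sum_i\partial D^L_i$ and $\partial C_K=\sum_j\partial D^K_j$; then $Z_L:=\sum_iD^L_i-C_L$ and $Z_K:=\sum_jD^K_j-C_K$ are closed $2$-cycles representing $\OC\clow([p_L])$ and $\OC\clow([p_K])$. Because $L\cap K=\emptyset$, a generic perturbation of $C_L,C_K$ makes $Z_L,Z_K$ transverse with all intersections in the interiors of the disks, so that $\OC\clow([p_L])\cdot\OC\clow([p_K])$ is the signed count of interior intersection points of the $D^L_i$ with the $D^K_j$.

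Next I would exhibit this count as the entire boundary of a compact $1$-manifold. Consider the moduli space $\mathcal M$ of $J$-holomorphic annuli $w\colon A\to X$ with one boundary circle on $L$ passing through $p_L$ and the other on $K$ passing through $p_K$, of total area $a+b$ and total Maslov index $4$, taken modulo reparametrisation but with the conformal modulus of the domain surviving as a parameter; the index is arranged so that $\mathcal M$ is $1$-dimensional. Its ends are of three kinds: (i) the modulus degenerates, pinching the annulus into a disk on $L$ through $p_L$ glued at an interior point to a disk on $K$ through $p_K$ — by the area constraint these have areas exactly $a$ and $b$, and this end contributes precisely $\pm\,\OC\clow([p_L])\cdot\OC\clow([p_K])$ by the previous paragraph; (ii) the two boundary circles collide into a configuration containing a Floer strip for the pair $(L,K)$; and (iii) disk or sphere bubbling. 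Ends of type (ii) cannot occur, since $L\cap K=\emptyset$ supplies no intersection points and hence no such strips. Ends of type (iii) are excluded by the standing hypothesis $a+b<\min(A,B)$: any disk bubble has area $\le a+b<A,B$, hence area exactly $a$ or $b$ with boundary on $L$ or $K$, and an index-and-regularity count — together with the usual genericity argument discarding negative-Chern spheres and making positive-Chern ones of codimension $\ge 2$ in a $4$-manifold — shows that the resulting strata are either already among the type-(i) ends or of codimension $\ge 2$ in $\overline{\mathcal M}$. Thus $\partial\mathcal M$ consists only of type-(i) ends, whose signed total is zero; that is $\OC\clow([p_L])\cdot\OC\clow([p_K])=0$, the desired contradiction, proving that $L$ and $K$ are Hamiltonian non-displaceable.

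The hard part will be the compactness and transversality analysis behind step (iii): one must verify that $a+b<\min(A,B)$, together with $\sum_i\partial[D^L_i]=0$ and regularity, genuinely kills \emph{every} codimension-$1$ degeneration of the annulus moduli space other than the interior-node ends — including bubbling of disks whose boundary is non-trivial in $H_1$, sphere bubbling, and multiply-covered or ghost components — and that gluing at a type-(i) node reproduces, with correct signs and multiplicities, the topological intersection number of $Z_L$ with $Z_K$. This is exactly where the structural results of low-area Floer theory developed earlier in the paper must be used, rather than a soft argument; the remaining steps are formal.
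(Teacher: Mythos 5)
Your overall architecture (reduce to $L\cap K=\emptyset$, build a $1$-dimensional family of annuli with boundaries on $L$ and $K$ through $p_L,p_K$, and equate its ends with the intersection number) is the same cobordism strategy the paper uses, but two of your key identifications fail, and they are exactly the points where the paper has to add extra structure. First, your claim that a generic perturbation of the capping chains makes $\OC\clow([p_L])\cdot\OC\clow([p_K])$ equal to the signed count of interior intersections of the $D^L_i$ with the $D^K_j$ is false. With $Z_L=\sum_i D^L_i-C_L$, $Z_K=\sum_j D^K_j-C_K$ and $C_L\subset L$, $C_K\subset K$, one has
$Z_L\cdot Z_K=\bigl(\sum_i D^L_i\bigr)\cdot\bigl(\sum_j D^K_j\bigr)-\bigl(\sum_i D^L_i\bigr)\cdot C_K-C_L\cdot\bigl(\sum_j D^K_j\bigr)+C_L\cdot C_K$; only the last term dies from $L\cap K=\emptyset$. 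The cross terms cannot be perturbed away: $C_L$ is a $2$-chain inside the surface $L$ with prescribed boundary, so its local multiplicities are forced (up to adding $[L]$), and it must meet the finitely many points of $D^K_j\cap L$ with those multiplicities. Correspondingly, a pure-annulus moduli space (no flowlines) has $R=+\infty$ ends only of disk--disk type, so even if everything else worked you would prove vanishing of the disk--disk count, not of the homological pairing. This is precisely why the paper's moduli space $\M$ (Figure~\ref{fig:def_Moduli}) consists of \emph{pearly} configurations with Morse flowlines attached: the extra $R=+\infty$ ends in Figure~\ref{fig:config_R_infty} (one disk constant, flowline hitting the interior of the other Lagrangian's disk) supply exactly the cross terms, and Lemma~\ref{lem:configs_CO} is what legitimately identifies the total $R=+\infty$ count with $\OC\clow([p_L])\cdot\OC\clow([p_K])$. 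Of course, once flowlines are present one must also cancel the broken-flowline ends, and that is where Condition~(\ref{eq:disk_low_area_cancel_bdies}) and perfectness of the Morse functions are actually used in the proof --- in your version that hypothesis only enters to define the cycle, which is a sign the argument is not yet using it where it is needed.

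Second, your step (iii) does not exclude all codimension-one degenerations. The hypothesis $a+b<\min(A,B)$ only controls areas of \emph{Maslov index $2$} classes, so at $R=+\infty$ the annulus can also break into a Maslov index $4$ disk of area $a+b$ on one Lagrangian (carrying one point constraint on its boundary and the other at an interior point) together with a constant (ghost) disk on the other Lagrangian; an index count shows this is a rigid, hence codimension-one, end of your $1$-dimensional family, not a codimension-$\ge 2$ stratum and not of type (i). The paper kills these ends (Figure~\ref{fig:config_R_infty_unwanted}) by imposing, as part of the definition of $\M$, that the total boundary class on each Lagrangian is non-zero in $H_1(\cdot;\Z)$ --- the same restriction to homologically non-trivial boundaries that is built into the definition of $\OC\clow$ --- since the ghost component would force the limiting boundary class on one side to vanish. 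Your moduli space carries no such condition, so these ends contribute unknown terms and the conclusion $\partial\mathcal M=\{\text{type (i)}\}$ breaks down. Both repairs (flowline configurations plus the non-trivial-boundary constraint), together with the domain-dependent perturbation scheme for regularity of the annuli, are the substantive content of the paper's proof rather than routine verifications.
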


Above, the dot denotes the intersection pairing $H_2(X)\otimes H_2(X)\to Q$. 
We refer to Subsection~\ref{subsec:non_displ_dim4} for a comparison with Biran-Cornea's theorem in the monotone setup, 
and for a connection of $\OC\clow$ with the classical open-closed string map.
Note that the above intersection number 
is well-defined due to the hypothesis $[L]=[K]=0$.

\begin{remark} \label{rmk:L.K=0} The condition $[L]=[K]=0$ is in fact totally
non-restrictive, due to the following two ``lower index'' non-displaceability
criteria. First, if $[L] \cdot [K] \neq 0$, then $L$ and $K$ are
non-displaceable for topological reasons. Second, if $$[L]\cdot
\OC\clow([p_K])\neq 0\quad \text{ or }\quad [K]\cdot \OC\clow([p_L])\neq 0,$$
one can show that $K$ and $L$ are non-displaceable by a variation on
Theorem~\ref{th:CO} whose proof can be carried analogously. Finally, if the
conditions of the previous two criteria fail, the intersection number
$\OC\clow([p_L])\cdot \OC\clow([p_K])$ is well-defined, and the reader can check
that the proof of Theorem~\ref{th:CO} still applies.

\end{remark}

Our proof of Theorem~\ref{th:CO} uses the idea of gluing holomorphic disks into
annuli and running a cobordism argument by changing the conformal parameter of
these annuli. This argument has been prevously used in Abouzaid's
split-generation criterion \cite{Ab10} and in Biran-Cornea's theorem
\cite[Section 8.2.1]{BC09B}. We follow the latter outline with several important
modifications involved. The condition $a+b<\min(A,B)$, which does not arise when
both Lagrangians are monotone ($A=B= +\infty$), is used in the proof when the
disks $D_i^L$ and $D_j^K$ are glued to an annulus of area $a+b$; the condition
makes sure higher-area Maslov index~2 disks on $L$ cannot bubble off this
annulus. This condition, for example, translates to $a<1/9$ in
Theorem~\ref{th:T_a}. Additionally, we explain how to achieve transversality for
the annuli by domain-dependent perturbations -- although arguments of
this type appeared before in the context of Floer theory \cite{SeiBook08,Ab10,She11,CM07,CW13},
we decided to explain this carefully.

\begin{remark}
	Our proof only uses
	classical transversality theory for holomorphic curves, as opposed to virtual
	perturbations required to set up conventional Floer theory for non-monotone
	Lagrangians; compare \cite{Cha15A}.
\end{remark}

\begin{remark}
There is another (weaker) widely used notion of monotonicity of $X$ or $L$,
where $\omega$ is required to be proportional to $c_1(X)$ resp.~Maslov class of
$L$ only when restricted to the image of $\pi_2(X)$ resp.~$\pi_2(X,L)$ under
the Hurewicz map. It is possible to use this definition throughout the paper; moreover, the numbers $a$ and $A$, see \eqref{eq:defn_a_dim4},
\eqref{eq: def A}, can be defined considering $u \in \pi_2(X,L)$. All theorems still hold as stated, except for small
adaptations in the statement of Lemma \ref{lem:cotangent_bundles}, e.g.
requiring $c_1(X)_{|\pi_2(X)} = k \omega_{|\pi_2(X)}$. 
\end{remark}

Next, we shall need a  technical improvement of our theorem. Fix a field $\k$, and choose an affine subspace
$$S_L\subset H_1(L;\k).$$

\begin{remark}
The field $\k$ and the ring $Q$ appearing earlier play independent roles in the proof, and need not be the same.
\end{remark}

Consider all affine subspaces parallel to $S_L$; they have the form $S_L+l$
where $l\in H_1(L;\k)$. For each such affine subspace, select all holomorphic
disks among the $\{D^L_i\}$ whose boundary homology class over $\k$ belongs to
that subspace and are non-zero. Also, assume that
the boundaries of the selected disks cancel over $Q$. This cancellation has to
happen in groups for each affine subspace of the form $S_L+l$. The stated
condition can be rewritten as follows:

\begin{equation}
	\label{eq:disk_groups_cancel_bdies}
	\sum_{\mathclap{D_i^L\, :\, [\bd D_i^L]\in S_L+l}}\ \ [\bd D_i^L]=0\in H_1(L;Q)\text{ for each }l\in H_1(L;\k).
\end{equation}
This condition is in general finer than the total cancellation of boundaries (\ref{eq:disk_low_area_cancel_bdies}), and coincides with (\ref{eq:disk_low_area_cancel_bdies}) when we choose $S_L=H_1(L;\k)$.
Under Condition (\ref{eq:disk_groups_cancel_bdies}), we can define $$\OC\clow([p_L],S_L)\subset H_2(X)$$ to be any element whose image under $H_2(X)\to H_2(X,L)$ equals
$$
\sum_{D_i^L\, :\, [\bd D_i^L]\in S_L}[ D_i^L] \in H_2(X,L). 
$$
Note that here we only use the disks whose boundary classes belong to the subspace $S_L\subset H_1(L;\k)$ and ignore the rest. Again, $\OC\clow([p_L],S_L)\subset H_2(X)$ is well-defined up to $[L]$.
The same definitions can be repeated for another Lagrangian submanifold $K$.

\begin{theorem} \label{th:CO_groups} Let $L$ and $K$ be orientable Lagrangian
surfaces and $S_L \subset H_1(L,\k)$, $S_K \subset H_1(L,\k)$ affine subspaces. Assume
that Condition~(\ref{eq:disk_groups_cancel_bdies}) holds for $L, S_L$ and $K,
S_K$, the minima $a$, $A$, $b$, $B$ exist, and $[L]=[K]=0$. Suppose that $a+b<\min(A,B)$ and
the homological intersection number below is non-zero over $Q$:
$$
\OC\clow([p_L],S_L)\cdot \OC\clow([p_K],S_K)\neq 0.
$$
Then $L$ and $K$ are
Hamiltonian non-displaceable.
\end{theorem}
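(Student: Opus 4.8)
The plan is to prove Theorem~\ref{th:CO_groups} directly; Theorem~\ref{th:CO} is recovered as the special case $S_L=H_1(L;\k)$, $S_K=H_1(K;\k)$, where \eqref{eq:disk_groups_cancel_bdies} becomes \eqref{eq:disk_low_area_cancel_bdies} and $\OC\clow([p_L],S_L)=\OC\clow([p_L])$. Argue by contradiction: suppose $\phi$ is a Hamiltonian diffeomorphism with $\phi(L)\cap K=\emptyset$. Replacing $L$ by $\phi(L)$, $p_L$ by $\phi(p_L)$ and $S_L$ by its image under $\phi_*\colon H_1(L;\k)\to H_1(\phi(L);\k)$, and invoking the Hamiltonian-isotopy invariance of $\OC\clow(\cdot\,,S_\bullet)$ (the refinement of Lemma~\ref{lem:OC_invt}), we may assume from now on that $L\cap K=\emptyset$ while still $\OC\clow([p_L],S_L)\cdot\OC\clow([p_K],S_K)\neq 0$ over $Q$. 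It then suffices to derive a contradiction from the existence of such a disjoint pair.

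I would run the disk-into-annulus cobordism of Abouzaid \cite{Ab10} and Biran--Cornea \cite{BC09B}. For $R\in(1,\infty)$ let $\mathcal{M}(R)$ be the space of $J$-holomorphic maps from the annulus $A_R=\{1\le |z|\le R\}$ to $X$ carrying the inner boundary circle into $L$ and the outer one into $K$, representing a relative class of total symplectic area $a+b$, and sending two fixed boundary marked points to $p_L\in L$ and $p_K\in K$. After a domain-dependent perturbation of $J$ achieving transversality for these annuli (as flagged in the introduction), $\mathcal{M}:=\bigsqcup_{R}\mathcal{M}(R)$ is a $1$-manifold, with free parameter $R$, and its ends are of three kinds. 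As $R\to\infty$ the annulus neck-stretches; since the area $a+b$ is already exhausted by two disks there is no room for sphere bubbles, and a gluing analysis identifies these ends with pairs $(D^L_i,D^K_j)$ of Maslov~$2$ disks of areas $a$ and $b$, through $p_L$ and $p_K$, meeting at a single interior node. As $R\to 1$ a configuration would be forced to collapse near a point of $L\cap K$, which is empty, so $\mathcal{M}(R)=\emptyset$ for $R$ near $1$. Finally, at finitely many interior $R$ a disk may bubble off a boundary circle; here the bound $a+b<\min(A,B)$ is decisive --- a bubbling Maslov~$2$ disk on the $L$-side has area in $\{a\}\cup[A,\infty)$ and area $\ge A$ is impossible, so the bubble has area exactly $a$ (hence is one of the $D^L_i$), and symmetrically on the $K$-side.

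For the refined statement I would track along $\mathcal{M}$ the homology classes in $H_1(L;\k)$ and $H_1(K;\k)$ of the two boundary loops of the annulus; these are locally constant and change only at a disk-bubbling stratum, where the $L$-class (resp.\ $K$-class) jumps by $[\partial D^L_i]$ (resp.\ $[\partial D^K_j]$) of the bubbled disk. Writing $V_L\subset H_1(L;\k)$, $V_K\subset H_1(K;\k)$ for the linear subspaces underlying $S_L$, $S_K$, the locus $\mathcal{M}_{S_L,S_K}\subset\mathcal{M}$ where the $L$-class lies in $S_L$ and the $K$-class in $S_K$ is a sub-$1$-manifold whose boundary consists of: the $R\to\infty$ ends built from disks with $[\partial D^L_i]\in S_L$ and $[\partial D^K_j]\in S_K$, whose signed count over $Q$ is exactly $\OC\clow([p_L],S_L)\cdot\OC\clow([p_K],S_K)$ (using $[L]=[K]=0$, $L\cap K=\emptyset$, and the $l=0$ case of \eqref{eq:disk_groups_cancel_bdies} to make the lift to $H_2(X)$); the empty $R\to 1$ end; and the disk-bubbling strata at which the class leaves $S_L$ (so $[\partial D^L_i]\notin V_L$) or leaves $S_K$. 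Grouping the latter strata according to which coset $S_L+l$ (resp.\ $S_K+l$) the bubbled boundary lies in, an intersection-number computation shows their total contribution over $Q$ is governed by the sums $\sum_{i\,:\,[\partial D^L_i]\in S_L+l}[\partial D^L_i]\in H_1(L;Q)$ and their $K$-counterparts, which vanish for every $l$ precisely by Condition~\eqref{eq:disk_groups_cancel_bdies}. Hence $0=\#\partial\mathcal{M}_{S_L,S_K}=\OC\clow([p_L],S_L)\cdot\OC\clow([p_K],S_K)$ over $Q$, contradicting the hypothesis; so no such $\phi$ exists. (In the unrefined case $S_L=H_1(L;\k)$ the class never leaves $S_L$, the only extra boundary strata are the $D^L_i$-bubbles, and \eqref{eq:disk_low_area_cancel_bdies} makes their total contribution vanish --- this is how $\OC\clow([p_L])$ enters Theorem~\ref{th:CO}.)

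The main obstacle is the moduli-theoretic input: constructing regular moduli spaces of these annuli via domain-dependent almost complex structures in a way compatible with the gluing at both the neck-stretching and the disk-bubbling ends; carrying out a complete Gromov-compactness and gluing analysis \cite{Gr85} that enumerates \emph{every} boundary stratum of the $1$-dimensional spaces --- in particular excluding or accounting for Maslov~$\le 0$ disk bubbles, sphere bubbles, and strata in which a marked point runs off down a developing neck --- and, for Theorem~\ref{th:CO_groups}, doing the bookkeeping of the two boundary homology classes across each stratum precisely enough that Condition~\eqref{eq:disk_groups_cancel_bdies} can be seen to cancel exactly the right combinations. One also has to establish the Hamiltonian-isotopy invariance of $\OC\clow(\cdot\,,S_\bullet)$ used at the outset --- itself a parametrized cobordism argument that will again need the coset-wise cancellation --- and to handle signs and orientations consistently over the coefficient ring $Q$, which, as the earlier remark warns, may be forced to be a torsion ring that is not a field.
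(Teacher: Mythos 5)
Your overall frame (argue by contradiction, run a one\-parameter family of annuli with boundary on $L$ and $K$ through $p_L,p_K$, use $a+b<\min(A,B)$ to control bubbling, note the $R\to 0$ end needs $L\cap K\neq\emptyset$) agrees with the paper, but the proposal breaks precisely at the two central counts, because your moduli space consists of honest annuli only. In a $1$-manifold of annuli, a nodal configuration --- an area-$a$ Maslov $2$ disk through $p_L$ attached at a boundary point to an area-$b$ Maslov $2$ annulus through $p_K$ --- is a genuine end: there is no second side to glue to, so it is not a ``wall'' at which the boundary class jumps, and in your locus $\M_{S_L,S_K}$ \emph{every} bubbling stratum is boundary, not only those ``leaving $S_L$''. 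Moreover the asserted vanishing of these boundary contributions does not follow from \eqref{eq:disk_low_area_cancel_bdies} or \eqref{eq:disk_groups_cancel_bdies}: the count of such nodal ends is the intersection in $L$ of $\sum_i\bd D_i^L$ with the $1$-chain swept by the $L$-side marked point of the one-parameter family of area-$b$ annuli through $p_K$, and that chain is \emph{not closed} (the family of small annuli has its own ends), so nullhomology of $\sum_i[\bd D_i^L]$ gives nothing. Symmetrically, your identification of the $R\to\infty$ two-disk count with $\OC\clow([p_L],S_L)\cdot\OC\clow([p_K],S_K)$ fails at chain level: a cycle representing $\OC\clow([p_L],S_L)$ is $(\cup_i D_i^L)\cup C^L$ for a capping $2$-chain $C^L\subset L$, and the homological product contains the cross terms $(\cup_i D_i^L)\cdot C^K$ and $C^L\cdot(\cup_j D_j^K)$, which have no reason to vanish since the disks may intersect the other Lagrangian.

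The paper's proof supplies exactly the device you are missing: $\M$ is a \emph{pearly} moduli space (Figure~\ref{fig:def_Moduli}) in which the area-$a$ (resp.\ $b$) disk is joined to an area-$b$ (resp.\ $a$) annulus by a gradient flowline of a perfect Morse function, with length $l\in[0,\infty)$. The nodal ($l=0$) configurations then become \emph{interior} points (gluing corresponds to $l<0$), the cancellation is pushed to the flowline-breaking ends, where it really does follow from \eqref{eq:disk_low_area_cancel_bdies}, resp.\ coset by coset from \eqref{eq:disk_groups_cancel_bdies} for Theorem~\ref{th:CO_groups}, combined with perfectness of the Morse functions and orientability (the $(-\vartheta)\cdot\vartheta=0$ argument); and the flowline-swept chains $C^L,C^K$ are precisely the caps that make the \emph{total} $R=+\infty$ count equal to the homological intersection number (Lemma~\ref{lem:configs_CO}: the three $R=+\infty$ configuration types realize the three surviving terms of the four-term expansion). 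You would also need the paper's requirement that the total boundary classes of the configurations be non-zero in $H_1(L;\Z)$ and $H_1(K;\Z)$: without it, the $R\to\infty$ degeneration into a Maslov index $4$ disk plus a constant disk (Figure~\ref{fig:config_R_infty_unwanted}) occurs generically and contaminates the count with Maslov index $4$ disks about which the hypotheses say nothing. So the missing idea is the Morse-flowline enrichment (plus the non-zero-boundary constraint); without it the vanishing at interior-$R$ bubbling and the identification at $R=+\infty$ are individually false, and only their combination --- visible exactly in the pearly model --- yields the contradiction.
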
 

We point out that, as in Remark~\ref{rmk:L.K=0}, the condition $[L]=[K]=0$ is in fact non-restrictive; however we will not use this observation.

When $L$ or $K$ is monotone, we shall drop the subscript {\it low} from our notation for $\OC\clow(\cdot)$.

\subsection{Computing low-area string invariants}
\label{subsec:compute_low_area}
There is a natural setup for producing Lagrangian submanifolds whose least-area
holomorphic disks will be known. Let us start from an orientable
{\it monotone} Lagrangian $L\subset T^*M$ disjoint from the zero section, and
for which we know the holomorphic Maslov index~2 disks and therefore can compute
our string invariant. We are still restricting to the
4-dimensional setup, so that $\dim M=2$. Next, let us apply fibrewise scaling to
$L$ in order to get a family of monotone Lagrangians $L_a\subset T^*M$ indexed
by the parameter $a\in (0,+\infty)$; we choose the parameter $a$ to be equal to
the areas of Maslov index 2 disks with boundaries on $L_a$. (The scaling changes
the area but not the enumerative geometry of the holomorphic disks.) The next
lemma, explained in Section~\ref{sec:T_a}, follows from an explicit knowledge of
holomorphic disks; recall that we drop the {\it low} subscript from the string
invariants as we are in the monotone setup.

\begin{lemma} 
	\label{lem:cotangent_bundles}
	\begin{enumerate}
		\item[(i)]
		There are monotone Lagrangian tori $\hat L_a\subset T^*S^2$, indexed by $a\in(0,+\infty)$ and called Chekanov-type tori, which bound Maslov index 2 disks of area $a$ and satisfy Condition~(\ref{eq:disk_low_area_cancel_bdies}) over $Q=\Z/4$, such that:
		\begin{equation}
			\label{eq:S2_OC}
			\OC\c([p_{\hat L_a}])=2[S^2]\in H_2(T^*S^2;\Z/4).
		\end{equation}
		Moreover, there is a 1-dimensional affine subspace $S_{\hat L_a}=\langle \beta\rangle\subset H_1(\hat L_a;\Z/2)$
		satisfying Condition~(\ref{eq:disk_groups_cancel_bdies}) over $\k=\Z/2$ and $Q=\Z/2$, such that:
		\begin{equation}
			\label{eq:S2_OC_groups}
			\OC\c([p_{\hat L_a}],S_{\hat L_a})=[S^2]\in H_2(T^*S^2;\Z/2).
		\end{equation}
		
		\item[(ii)]
		Similarly, there are monotone Lagrangian tori $L_a\subset T^*\RP^2$, indexed by $a\in(0,+\infty)$, which bound Maslov index 2 disks of area $a$ and satisfy Condition~(\ref{eq:disk_low_area_cancel_bdies}) over $Q=\Z/8$, such that:
		\begin{equation}
			\label{eq:RP2_OC}
			\OC\c([p_{L_a}])=[4\RP^2]\in H_2(T^*\RP^2;\Z/8).
		\end{equation}
	\end{enumerate}
	In both cases, the tori are pairwise disjoint; they are contained inside any given neighbourhood of the zero-section for small enough $a$.
\end{lemma}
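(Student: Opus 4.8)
The plan is to construct these tori explicitly as Chekanov-type monotone tori inside disk cotangent bundles, building them from the known local models and then reading off the holomorphic disk count. For part (i), I would start from the fact that $T^*S^2$ is symplectomorphic (away from the zero section) to a neighbourhood of a conic in $\CP^1\times\CP^1$ or to the smoothing of the $A_1$ singularity, and that the Chekanov torus in $\C^2$ (or in the $A_1$-Milnor fibre) has a well-understood family of Maslov index 2 holomorphic disks: there is one disk class of some area $a$ appearing with multiplicity, together with a sphere class, all coming from the standard Lefschetz-fibration picture. Concretely, the Chekanov torus lies in the fibre of a Lefschetz fibration $T^*S^2\to\C$ with one singular fibre; it bounds a family of Maslov 2 disks whose boundaries, in $H_1(\hat L_a;\Z)\cong\Z^2$, are $\pm\beta$ (the vanishing-cycle direction) together with two more classes $\gamma$, $\gamma+\beta$ say, obtained by going around the critical value. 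I would then compute $\sum_i[\bd D_i^L]$ and observe that over $\Z/4$ (but not over $\Z$ or a field) the boundary classes of the disks of minimal area $a$ cancel, which is Condition~(\ref{eq:disk_low_area_cancel_bdies}); the precise combinatorics of which disks have equal area is what forces the modulus $4$ (resp.\ $8$ in part (ii)). The fibrewise scaling $L\mapsto L_a$ changes $\omega$ on disk classes linearly in $a$ but not the $J$-holomorphic disk count, so the whole family $\hat L_a$ inherits the same enumerative picture with areas rescaled, and for $a$ small the tori sit in an arbitrarily small neighbourhood of the zero section; disjointness for different $a$ is immediate from the fibrewise-scaling construction.

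For the actual value $\OC\c([p_{\hat L_a}])=2[S^2]$, I would identify $H_2(T^*S^2;\Z/4)$ with $\Z/4\cdot[S^2]$, note that $H_2(T^*S^2,\hat L_a)\to H_2(T^*S^2)$ sends the lift of $\sum_i[D_i^L]$ to whatever homology class the total disk cycle carries, and compute this by a relative intersection/capping argument: cap off the boundary $1$-cycles $\sum\bd[D_i^L]$ (which vanish in $H_1(\hat L_a;\Z/4)$ by the cancellation condition) by chains in $\hat L_a$, and evaluate the resulting absolute class against a generator. The arithmetic — how many disks of minimal area there are and what their classes are — then yields the coefficient $2$. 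The finer statement over $\k=\Z/2$ singles out the one-dimensional subspace $S_{\hat L_a}=\langle\beta\rangle$ corresponding to the vanishing-cycle direction: only the subfamily of disks whose boundary lies in this line is retained, their boundaries cancel over $\Z/2$ by Condition~(\ref{eq:disk_groups_cancel_bdies}), and the corresponding capped class is $[S^2]$ rather than $2[S^2]$, reflecting that one has kept ``half'' of the disks. Part (ii) is the same strategy with $T^*\RP^2$ in place of $T^*S^2$: here $H_2(T^*\RP^2;\Z)\cong\Z$ generated by $[\RP^2]$, the relevant local model is the $A_1$ quotient picture or the monotone torus in $T^*\RP^2$ studied in the almost-toric / Lagrangian-torus literature, the boundary classes of minimal-area Maslov 2 disks now cancel only modulo $8$, and the capped total class is $4[\RP^2]$.

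The main obstacle I expect is the precise enumeration of the lowest-area Maslov index 2 disks — their count, their boundary homology classes, and the verification that these are \emph{regular} — together with pinning down exactly why the boundary cancellation requires the modulus $4$ (resp.\ $8$) and not a coarser one; this is where one must be careful about orientations and signs of the disk contributions, since the whole point of the lemma (flagged in the Remark after Theorem~\ref{th:T_a}) is that the cancellation genuinely fails over $\Z$ and over any field. I would handle this by appealing to an explicit Lefschetz-fibration model where all Maslov 2 disks are sections over half-disks in the base and can be listed by hand (as in the standard Chekanov-torus computations), checking regularity via the standard automatic-transversality or explicit-linearisation arguments available in these models, and tracking orientations through the pin-structure conventions. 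A secondary, more bookkeeping-level obstacle is confirming that the rescaled tori $\hat L_a$, $L_a$ remain monotone with the disk areas being exactly $a$ for the chosen normalisation of the scaling parameter, but this is a direct consequence of the fibrewise-scaling construction and poses no conceptual difficulty.
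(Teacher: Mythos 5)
Your route is genuinely different from the paper's, and in principle viable, but as sketched it contains two concrete errors that would derail it. On the comparison: the paper does not enumerate disks directly in the cotangent bundles via a Lefschetz-fibration model. Instead it identifies the complement $U$ of the divisor $D$ (the diagonal in $\CP^1\times\CP^1$, a conic in $\CP^2$) with an open co-disk bundle in $T^*S^2$ resp.\ $T^*\RP^2$, observes that among the Maslov index~2 classes of Table~\ref{tab: Disks} only $\beta$ meets $D$, so all the area-$a$ disks already known for $\T_a$ and $T_a$ (from \cite{FO312,Wu15,ChSch10,Au07}, with positive signs and regularity) lie in $U$, uses the maximum principle to rule out any further holomorphic disks in the cotangent bundle, and then transports the computation of $\OC\c$ back and forth through the injective map $i_*$ on $H_2$ with the relevant coefficients. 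What the paper buys by this is that no new disk enumeration, regularity check or sign analysis is needed; your plan to redo all of that by hand in a Lefschetz model is exactly the substantive content being imported from the literature, so you would need to carry it out in full (including the quotient/$T^*\RP^2$ case, which is the least standard part).

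The two errors: first, your proposed boundary classes for the minimal-area disks on $\hat L_a$ ($\pm\beta$ together with $\gamma$, $\gamma+\beta$) are incompatible with the statement you are proving, since their sum $2\gamma+\beta$ does not vanish modulo $4$. The actual picture is that all four area-$a$ Maslov index~2 disks through a point have boundary $-\bd\beta-\bd\alpha$, $-\bd\beta$ (with multiplicity two) and $-\bd\beta+\bd\alpha$, so the sum is $-4\,\bd\beta$; this is what forces $Q=\Z/4$, and selecting the two disks with boundary in $S_{\hat L_a}=\langle\beta\rangle$ gives cancellation over $\Z/2$ and total class $[S^2]$. For $L_a\subset T^*\RP^2$ the corresponding boundaries are $-2\bd\beta-\bd\alpha$, $-2\bd\beta$ (twice), $-2\bd\beta+\bd\alpha$, summing to $-8\,\bd\beta$, whence $Q=\Z/8$. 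Second, $H_2(T^*\RP^2;\Z)$ is not $\Z$ generated by $[\RP^2]$: since $\RP^2$ is non-orientable this group is zero, and $H_2(T^*\RP^2;\Z/8)\cong\Z/2$, whose generator is the class written $[4\RP^2]$ (this is precisely the subtlety flagged in the remark following the lemma). Part~(ii) only makes sense, and is proved in the paper, through this identification: the total relative class of the four disks maps to $4\cH\in H_2(\CP^2;\Z/8)$, which equals $i_*[4\RP^2]$, and one concludes by injectivity of $i_*$ on $H_2(\,\cdot\,;\Z/8)$. Your ``cap off the boundaries'' description of the lift to absolute homology is consistent with the paper's definition of $\OC\clow$, but the target group and the coefficient bookkeeping in (ii) need to be corrected before the computation can be completed.
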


\begin{remark}
	Note that $\RP^2$ is non-orientable so it only has fundamental class over $\Z/2$, 
	however the class $[4\RP^2]$ modulo $8$ also exists.
\end{remark}

Now suppose $M$ itself admits a  Lagrangian embedding 
$M\to X$ into some \emph{monotone} symplectic manifold $X$
(We do not require that $M\subset X$ be monotone.)
By the Weinstein neighbourhood theorem, this embedding extends to a symplectic embedding $i\co U\to X$ for a neighbourhood $U\subset T^*M$ of the zero-section. Possibly by passing to a smaller neighbourhood, we can assume that  $U$ is convex. By construction, the  Lagrangians $L_a$ will belong to $U$ for small enough $a$: 
$$L_a\subset U,\ a\in (0,a_0).$$
(The precise value of $a_0$ depends on the size of the available Weinstein neighbourhood.) We define the Lagrangians

\begin{equation}
	\label{eq:induced_Lags}
	K_a=i(L_a)\subset X, \ a\in (0,a_0)
\end{equation}
which are generally {\it non-monotone} in $X$ (even if $M\subset X$ were monotone).

Consider the induced map 
$
i_*\co H_2(T^*M)\to H_2(X).
$
The next lemma explains that, for sufficiently small $a$, the low-area string
invariants for the $K_a\subset X$ are the $i_*$-images of the ones for the
$L_a\subset T^*M$. We also quantify how small $a$ needs to be.

\begin{lemma}\label{lem:disks_in_nbhood} 
In the above setup, suppose
that the image of the inclusion-induced map $H_1(L;\Z)\to H_1(T^*M;\Z)$ is
$N$-torsion, $N\in\Z$. Let $M\subset (X,\omega)$ be a Lagrangian
embedding into a monotone symplectic manifold $(X,\omega)$. 
 Assume that $\omega$ is scaled in such a way that the area class in
$H^2(X,M;\R)$ is integral, and $c_1(X)=k\omega\in H^2(X;\Z)$ for some $k\in \N$. 
Assume that 
$$
a<1/(k+N).
$$
	
\begin{enumerate}
		\item[(i)] The number $a$ indexing the torus $K_a$ equals the number $a$ 
		defined by Equation~(\ref{eq:defn_a_dim4}). The number $A$ defined by 
		Equation~(\ref{eq: def A}) exists (could be $A = +\infty$) and satisfies: 
		$$A\ge \frac {1-(k-N)a}{N}.$$
		\item[(ii)] There is a tame almost complex structure on $X$ such that all 
		area-$a$ holomorphic Maslov index 2 disks in $X$ with boundary on $K_a$ 
		belong to $i(U)$, and $i$ establishes a 1-1 correspondence between them and 
		 the holomorphic disks in $T^*M$ with boundary on $L_a$. 
	\end{enumerate}
	In particular, when (i) and (ii) apply and $L_a\subset T^*M$ satisfy Condition~(\ref{eq:disk_low_area_cancel_bdies}), the following identity holds in $H_2(X)$:
	$$
	i_*(\OC\c([p_{L_a})])=\OC\clow([p_{K_a}]).
	$$
		Similarly, if
	$L_a\subset T^*M$ satisfy Condition~(\ref{eq:disk_groups_cancel_bdies}) then:
	$$
	i_*(\OC\c([p_{L_a}],S_{L_a}))=\OC\clow([p_{K_a}],S_{K_a}),
	$$
	where $S_{K_a}=i_*(S_{L_a})\subset H_1(K_a;\k)$.
\end{lemma}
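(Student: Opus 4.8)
The plan is to first establish the area bounds in part~(i) by a purely topological computation, then use a monotonicity/neck-stretching argument for part~(ii), and finally deduce the identities for the string invariants by naturality of the map $H_2(X)\to H_2(X,K_a)$.

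For part~(i), I would work with the long exact sequence of the pair and the two Lagrangian embeddings. A class $u\in H_2(X,K_a;\Z)$ with $\mu(u)=2$ can be compared with classes coming from $H_2(U,L_a)\cong H_2(T^*M,L_a)$. Writing the area class as integral and using $c_1(X)=k\omega$, the Maslov index of $u$ is $2\langle c_1,\cdot\rangle=2k\,\omega(\text{closed part})$ plus the contribution from the boundary in $H_1(L_a)$; the hypothesis that the image of $H_1(L;\Z)\to H_1(T^*M;\Z)$ is $N$-torsion forces the ``boundary'' of any such $u$ to be controlled modulo $N$, so that the area of $u$ is congruent, modulo the lattice generated by $1/N$-multiples, to the area $a$ of the standard disks. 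Concretely: the disks on $L_a\subset T^*M$ have area exactly $a$ and Maslov index $2$, and these persist in $X$; any other Maslov~$2$ class must differ from one of these by a class in $H_2(X;\Z)$ (area in $\Z$, after scaling) plus $N$-torsion corrections to the boundary. Running this bookkeeping gives that the smallest area strictly above $a$ is at least $\frac{1-(k-N)a}{N}$, where the $k$ comes from closing up with a class of minimal positive area $1/k$ (or rather positive area, using $c_1=k\omega$) and the $-Na$ and division by $N$ come from the torsion adjustments of the boundary. This is routine but must be done carefully with signs; I expect this to be the step most prone to off-by-a-sign errors, not conceptual difficulty.

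Part~(ii) is the geometric heart. The assertion is that for a suitable $J$ on $X$, every area-$a$ Maslov~$2$ $J$-holomorphic disk with boundary on $K_a$ is contained in $i(U)$. I would choose $J$ on $X$ so that it agrees with (the push-forward of) the chosen cylindrical-type almost complex structure on $U\subset T^*M$ near the zero section, and so that $i(U)$ is $J$-convex; then apply the standard maximum-principle / neck-stretching argument: if a disk exited $i(U)$, it would either hit $\partial(i(U))$ transversally (impossible by $J$-convexity and the maximum principle for the radial coordinate) or, in the limit of stretching the neck along $\partial(i(U))$, break into a disk of positive area inside $T^*M\setminus U$ together with other components; but the component staying near $K_a$ would then have area strictly less than $a$, contradicting~(i) (which identifies $a$ as the minimal positive area). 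Alternatively, and perhaps more cleanly given the area bound $a<1/(k+N)$: any disk with boundary on $K_a$ of area exactly $a$ that is not one of the standard ones must, by the computation in~(i), either have nontrivial intersection with a compactifying divisor or carry area $\ge A>a$, both impossible. Once we know the disks live in $i(U)$, the fact that $i$ is a symplectomorphism onto its image gives the $1$--$1$ correspondence with $J$-holomorphic disks on $L_a\subset T^*M$, and regularity transfers as well. The main obstacle is setting up the almost complex structure and the convexity/neck-stretching carefully enough that the maximum principle genuinely applies; this is where I would invoke the perturbation and transversality machinery referenced in the introduction (\cite{SeiBook08,Ab10,CM07}).

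Finally, for the two displayed identities, I would argue as follows. By~(ii), the collection $\{D_i^{K_a}\}$ of area-$a$ Maslov~$2$ disks through $p_{K_a}=i(p_{L_a})$ with boundary nonzero in $H_1(K_a)$ is exactly $\{i(D_j^{L_a})\}$ with the corresponding index set, and $i_*[\partial D_j^{L_a}]=[\partial D_i^{K_a}]$ in $H_1(K_a)$. Hence Condition~(\ref{eq:disk_low_area_cancel_bdies}) (resp.~(\ref{eq:disk_groups_cancel_bdies})) for $L_a$ transports to the same condition for $K_a$, with $S_{K_a}:=i_*(S_{L_a})$, so the invariants $\OC\clow([p_{K_a}])$ and $\OC\clow([p_{K_a}],S_{K_a})$ are defined. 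By definition these are lifts along $H_2(X)\to H_2(X,K_a)$ of $\sum_i[D_i^{K_a}]=i_*\big(\sum_j[D_j^{L_a}]\big)\in H_2(X,K_a)$; since $i_*$ is compatible with the maps to the relative groups (the square $H_2(U)\to H_2(X)$, $H_2(U,L_a)\to H_2(X,K_a)$ commutes), the class $i_*(\OC\c([p_{L_a}]))\in H_2(X)$ is such a lift. Lemma~\ref{lem:OC_invt} (independence of choices up to $[K_a]$) then gives equality in $H_2(X)$, and likewise for the $S$-refined version. \qed
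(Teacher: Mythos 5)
Your part~(i) and the concluding naturality argument follow essentially the paper's route: one multiplies a Maslov index~2 class $D\in H_2(X,K_a;\Z)$ by $N$ to write $ND=i_*(D')+D''$ with $D'\in H_2(T^*M,L_a;\Z)$ and $D''\in H_2(X;\Z)$, and the arithmetic with $c_1=k\omega$ gives $\omega(D)\in a+\tfrac1N(1-ka)\Z$, which for $a<1/(k+N)$ yields both claims of (i); you leave this as ``bookkeeping'' but the shape is right. The gap is in part~(ii), which is where the geometric content of the lemma lies.

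First, your dichotomy is not correct as stated: for a fixed $J$ that is merely cylindrical/convex near $\partial i(U)$, nothing prevents a holomorphic disk in $X$ with boundary on $K_a$ from crossing $\partial i(U)$ and coming back --- the maximum principle controls curves contained in the Liouville part (this is exactly how the paper handles disks on $L_a\subset T^*M$), not arbitrary curves in $X$ whose boundary happens to lie in $i(U)$; that is why the conclusion is only claimed after sufficient neck-stretching. More seriously, the contradiction you extract from the SFT limit does not follow: the component $C$ of the limit building lying in $i(U)$ with boundary on $K_a$ is a \emph{punctured} curve asymptotic to Reeb orbits of $\partial i(U)$, not a disk, so its area is not the area of a Maslov index~2 class in $H_2(X,K_a;\Z)$, and ``area less than $a$'' contradicts nothing in (i). The missing idea is the action/Stokes argument: because an area-$a$, Maslov~2 class has $\omega(D'')=c_1(D'')=0$, it equals $i_*(D')/N$ over $\Q$ and is therefore represented by a chain $D_0$ of area exactly $a$ inside $i(U)$ with the same boundary on $K_a$ as $C$; applying Stokes' theorem to $C\cup(-D_0)$ with $\omega=d\theta$ inside $i(U)$ gives $\omega(C)-a=\sum_j\int_{\gamma_j}\theta>0$, while $\omega(C)<a$ since $C$ is part of a building of total area $a$ --- that is the contradiction. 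Your ``cleaner alternative'' does not repair this: in the general setting of the lemma there is no compactifying divisor, and the homological conclusion of (i) (membership, rationally, in the image of $H_2(T^*M,L_a)$) does not imply the disk is \emph{geometrically} contained in $i(U)$, which is precisely what (ii) must establish. The transfer of Conditions~(\ref{eq:disk_low_area_cancel_bdies})--(\ref{eq:disk_groups_cancel_bdies}) and the identities for the string invariants are fine once (ii) is in place.
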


A proof is found in Section~\ref{sec:proof_OC}. To give a preview, part~(i) is
purely topological and part~(ii) follows from a neck-stretching argument.

\begin{remark}  The above constructions and proofs
work for any Liouville domain taken instead of $T^*M$. For example, there is
another class of Liouville domains containing interesting monotone Lagrangian
tori: these domains are rational homology balls whose skeleta are the so-called Lagrangian
pinwheels. The embeddings of Lagrangian pinwheels in $\CP^2$ have beed studied
in \cite{ES16}, and using such embeddings we can employ the above construction
and produce non-monotone tori in $\CP^2$ which are possibly
non-displaceable. In the language of almost toric fibrations on $\CP^2$ constructed in
\cite{Vi13}, these tori live above the line segments connecting the baricentre
of a moment triangle to one of the three nodes. See also Subsection~\ref{subsec:higher_dim} for a short discussion of higher dimensions.
\end{remark}

\subsection{Applications to non-displaceability}
Now that we have explicit calculations of the low-area string invariants available, we can start applying our main non-displaceability result. Our first application is to prove Theorem~\ref{th:non_displ_spheres}.

\begin{proof}[Proof of Theorem~\ref{th:non_displ_spheres}] Let $S\subset X$ be a
Lagrangian sphere in a del~Pezzo surface $X$ with an integral symplectic form. 
For concreteness, we normalise the symplectic form to make it primitive integral 
(it integrates by $1$ over some integral homology class). 
Let us define the Lagrangian tori
$T_a = i(\hat L_a) \subset X$ as in (\ref{eq:induced_Lags}), using the monotone tori
$\hat L_a\subset T^*S^2$ which appeared in Lemma~\ref{lem:cotangent_bundles}(i),
and the Lagrangian embedding $S\subset X$. The tori $T_a$ are indexed by
$a\in(0,a_0)$ for some $a_0>0$. Define the tori $T_b'$ indexed by $b\in(0,b_0)$
analogously, using $S'$ instead of $S$. 

After decreasing $a_0$ and $b_0$ if
required, we see that the conditions from Lemma~\ref{lem:disks_in_nbhood}(i,ii)
are satisfied. Here $N = 1$ and $k\in\{1,2,3\}$ depending on the del Pezzo surface, 
by the normalization of our symplectic form. Therefore by Lemma~\ref{lem:disks_in_nbhood} and
Lemma~\ref{lem:cotangent_bundles}(i) we have over $\k=Q=\Z/2$:
	$$
	\OC\clow(T_a,S_{T_a})=[S]\in H_2(X;\Z/2),\quad 
	\OC\clow(T_a',S_{T_a'})=[S']\in H_2(X;\Z/2)
	$$
	for the choices of $S_{T_a}\subset H_1(T_a;\Z/2)$ and $S_{T_a'}$ coming from the
	 one in Lemma~\ref{lem:cotangent_bundles}. Now let us apply Theorem~\ref{th:CO_groups}. We can check the condition $a+b<\min(A,B)$ using
     Lemma~\ref{lem:disks_in_nbhood}(i):
	$$
	a + b < \tfrac{2}{k+N} \le \min(\tfrac{1 - (k-N)a}{N},\tfrac{1 - (k-N)b}{N}) \le \min(A,B)
	$$
	whenever $a,b < \tfrac 1{k+N}=\tfrac1{k+1}$.
	Finally,
	$$
	\OC\clow(T_a,S_{T_a})\cdot 
	\OC\clow(T_a',S_{T_a'})=[S]\cdot[S']=1\in \Z/2.
	$$
So Theorem~\ref{th:CO_groups} implies that $T_a$ is non-displaceable from $T_b'$, for small $a,b$.	
\end{proof}	

 We will prove Theorem~\ref{th:T_a} in Section~\ref{sec:T_a}. In fact, we shall see that the tori $T_a\subset \CP^2$ appearing in Theorem~\ref{th:T_a} can be
 obtained via Formula (\ref{eq:induced_Lags}) using the monotone tori
 $L_a\subset T^*\RP^2$ from Lemma~\ref{lem:cotangent_bundles}(ii), and the
 Lagrangian embedding $\RP^2\subset \CP^2$ described in Section~\ref{sec: dfn
 Tori}.  We could have taken this as a definition, but our actual
exposition in Section~\ref{sec:T_a} is different: we introduce the tori
$T_a\subset \CP^2$ in a more direct and conventional way, and subsequently use
the existing knowledge of holomorphic disks for them to prove
Lemma~\ref{lem:cotangent_bundles}(ii).

\subsection{Higher-dimensional versions}
\label{subsec:higher_dim}
One can state generalisations of Theorems~\ref{th:CO} and~\ref{th:CO_groups} to higher dimensions. We shall omit the precise statements and instead mention a major class of potential examples where the low-area string invariants are easy to define, and which hopefully makes the details clear. The setup is as in Subsection~\ref{subsec:compute_low_area}: one starts with  a monotone Lagrangian submanifold $L$ in a Liouville domain $M$ rescaled so as to stay close to the skeleton of $M$. Then one takes a symplectic embedding $M\subset X$ into some symplectic manifold $X$. 

Consider the composite Lagrangian embedding $L\subset X$ which is not necessarily monotone. Provided that the symplectic form on $X$ is rational and the image of $H_1(L)\to H_1(M)$ is torsion, the classes in $H_2(X,L)$ whose symplectic area is below some treshold (depending on how close we scale $L$ to the skeleton of $M$) lie in the image of $H_2(M,L)$ and therefore \emph{behave as if $L$ were monotone} (meaning that their area is proportional to the Maslov index). This makes it possible to define low-area string invariants for $L\subset X$. An analogue of Lemma~\ref{lem:cotangent_bundles} can easily be established as well; the outcome is that the low-area string invariants of $L$ in $X$ are obtained as the composition
$$
\OC_\low\co 
HF_*(L)\xrightarrow{\OC}H_*(M)\to H_*(X), 
$$
where $HF_*(L)$ is the Floer homology of $L$ in $M$, and $\OC$ is the classical monotone closed-open string map (see Section~\ref{sec:proof_OC} for references). In this setup, it is most convenient to \emph{define} $\OC_\low$ as the above composition. Observe that this setup does not restrict to the use of Maslov index~2 disks, and also allows higher-index disks. 

Let us provide a statement which is similar to Theorem~\ref{th:non_displ_spheres}. 

\begin{theorem}[sketch]
Let $L\subset M$ and $K\subset N$ be  monotone Lagrangian submanifolds in Liouville domains, and these inclusions are $H_1$-torsion. Let $X$ be a monotone symplectic manifold, and $N,M\subset X$ be symplectic embeddings.
Suppose there are classes $\alpha\in HF_*(L)$, $\beta\in HF_*(K)$ such that  the following pairing in $X$ is non-zero:
$$\OC_\low(\alpha)\cdot \OC_\low(\beta)\neq 0.$$
By the above pairing, we mean the composition of the intersection product with the projection: $H_*(X)\otimes H_*(X)\to H_*(X)\to H_0(X)$.

Let $L_a,K_b\subset X$ be the Lagrangians obtained by scaling $L,K$ towards the skeleton within their Liouville domains, and then embedding them into $X$ via $N,M\subset X$.
Then
 $L_a,K_b\subset X$ are Hamiltonian non-displaceable from each other for sufficiently small $a,b$.\qed
\end{theorem}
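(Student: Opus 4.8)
The plan is to reduce the sketched theorem to Theorem~\ref{th:CO_groups} (or Theorem~\ref{th:CO} in the simplest case) by establishing a higher-dimensional analogue of Lemma~\ref{lem:disks_in_nbhood}, so that the low-area string invariants of $L_a,K_b\subset X$ are identified with the $H_*$-pushforwards of the classical monotone open--closed invariants of $L\subset M$ and $K\subset N$. First I would fix rational symplectic forms, normalise so that the area class on $H^2(X;\R)$ is integral and $c_1(X)=k\omega$, and use the Weinstein neighbourhood theorem to get symplectic embeddings $i_M\co U_M\to X$, $i_N\co U_N\to X$ of convex neighbourhoods of the skeleta. Scaling $L$ towards the skeleton of $M$ produces monotone $L_a\subset U_M$ with Maslov index~$2$ disks (and, if one works with higher-index disks, index-$\mu$ disks of area $\tfrac\mu2 a$) whose enumerative geometry is unchanged by scaling; likewise for $K_b\subset U_N$. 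The key topological input, exactly as in Lemma~\ref{lem:disks_in_nbhood}(i), is that since $H_1(L)\to H_1(M)$ is $N$-torsion, every class in $H_2(X,L_a)$ of area below roughly $1/(k+N)$ comes from $H_2(M,L_a)$, so for small $a$ such low-area classes have Maslov index proportional to area --- i.e.\ $L_a$ \emph{behaves as if monotone} in the relevant area window --- and the numbers $a,A,b,B$ are controlled with $A,B\gtrsim 1/N - O(a)$, giving $a+b<\min(A,B)$ for $a,b$ small.

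Next I would prove the geometric half: there is a tame $J$ on $X$, agreeing with the product/cylindrical models near the hypersurfaces $\partial(i_M U_M)$ and $\partial(i_N U_N)$, for which all area-$a$ Maslov~$2$ holomorphic disks on $L_a$ are contained in $i_M(U_M)$, and $i_M$ sets up a bijection between them and the holomorphic disks on $L\subset M$ counted by the classical $\OC$. This is a standard neck-stretching/SFT-compactness argument: a disk escaping $i_M(U_M)$ would, after stretching the neck along the contact-type boundary, break into a configuration containing a component in $X\setminus i_M(U_M)$ of positive area and nonnegative Maslov index, forcing total area $\ge 1/(k+N) > a$, a contradiction. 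The only real subtlety in higher dimensions versus the surface case of Lemma~\ref{lem:disks_in_nbhood}(ii) is transversality for these disks and for the broken configurations; but since $L\subset M$ is monotone and the disks in question have minimal area, one can choose $J$ generically in the cylindrical class so that no sphere or disk bubbling of the bad type occurs --- this is where I would lean on the classical transversality package already invoked in the paper (\cite{SeiBook08,Ab10,CM07,CW13}). With the bijection in hand, one concludes $\OC_\low(\alpha)=(i_M)_*\OC(\alpha)$ and $\OC_\low(\beta)=(i_N)_*\OC(\beta)$ in $H_*(X)$, and the same for the $S_L$-refined invariants when one wants the finer cancellation condition.

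Finally I would invoke the higher-dimensional analogue of Theorem~\ref{th:CO_groups}: having arranged Condition~(\ref{eq:disk_groups_cancel_bdies}) (which follows from the corresponding cancellation for $L\subset M$, i.e.\ from $\alpha,\beta$ being genuine Floer classes so that the disk boundaries assemble into a cycle), $a+b<\min(A,B)$, and
$$\OC_\low(\alpha)\cdot\OC_\low(\beta)=(i_M)_*\OC(\alpha)\cdot(i_N)_*\OC(\beta)\neq 0,$$
the gluing-into-annuli cobordism argument of Theorem~\ref{th:CO} goes through verbatim in higher dimensions: one glues a low-area disk on $L_a$ to one on $K_b$ along the point constraints into an annulus of area $a+b$, varies the modulus, and reads off a contradiction with displaceability from the count of boundary points of the resulting $1$-manifold, the condition $a+b<\min(A,B)$ guaranteeing no extra Maslov~$2$ disk bubbles off during the cobordism. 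The main obstacle I anticipate is precisely the transversality for the annuli (and their degenerations) in dimension $>4$ via domain-dependent perturbations --- in the $4$-dimensional body of the paper this is done by hand, and in higher dimensions one must either appeal to the monotonicity of $L\subset M$, $K\subset N$ to keep all relevant moduli spaces of the expected dimension, or restrict to situations (such as the cotangent-bundle examples) where the disks are explicitly known; everything else is a routine upgrade of the four-dimensional arguments.
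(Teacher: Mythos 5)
Your outline follows the same two-step strategy as the paper's (admittedly brief) proof sketch: first identify the low-area invariants of the scaled Lagrangians in $X$ with the pushforwards of the monotone open--closed invariants from the Liouville domains, then run the Biran--Cornea annulus/cobordism argument entirely in the low-area regime so that all bubbling happens "as if monotone". The differences are in framing and generality. The paper \emph{defines} $\OC_\low$ as the composition $HF_*(L)\xrightarrow{\OC}H_*(M)\to H_*(X)$, so the content of the identification lemma is that this composition coincides with actual low-area disk counts on $L_a\subset X$; and it explicitly allows arbitrary classes $\alpha\in HF_*(L)$, $\beta\in HF_*(K)$ and higher-index disks. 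Your write-up, by contrast, routes everything through the Maslov-index-$2$/point-class apparatus of Theorems~\ref{th:CO} and~\ref{th:CO_groups} (the numbers $a,A,b,B$, the condition $a+b<\min(A,B)$, annuli obtained by gluing two area-minimal index-$2$ disks through $p_L$ and $p_K$), which literally covers only $\alpha=[p_L]$, $\beta=[p_K]$ and the index-$2$ part of $\OC$. For general $\alpha,\beta$ the relevant pearly configurations contain disks of various Maslov indices, and the needed control is the paper's key observation: for any bounded window $I=(r,s)$ of Maslov indices one can scale close enough to the skeleton that every class in $H_2(X,L)$ with index in $I$ and sufficiently low area comes from $H_2(M,L)$, which upgrades to $(-\infty,s)$ for holomorphic disks because disks of Maslov index $\mu<-n$ do not exist generically. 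You gesture at this (the parenthetical about index-$\mu$ disks of area $\mu a/2$), but your explicit threshold $a+b<\min(A,B)$ and the neck-stretching claim are only formulated for the index-$2$, area-$a$ disks; to reach the stated generality you would need to state and use the interval observation so that the full pearl-complex version of the Biran--Cornea argument, not just the point-class annulus count, stays in the monotone regime. Also note the paper deliberately does \emph{not} incorporate the homologically-non-trivial-boundary refinement or the $S_L$-grouping in the higher-dimensional statement, so your appeal to the analogue of Theorem~\ref{th:CO_groups} is an extra (unneeded, though harmless) layer. Your closing caveat about transversality for annuli in dimension $>4$ is the same caveat the paper leaves to the reader, so it is not a point of divergence.
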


We remark that the above theorem does not incorporate the modification we made in Theorem~\ref{th:CO}, namely to only consider disks with homologically non-trivial boundary; we also have not explored the possibility to re-organise disks in groups like in Theorem~\ref{th:OCCO_monot}.

The idea of proof is to establish  a version of Lemma~\ref{lem:cotangent_bundles} saying that $\OC_\low$ defined as the above composition coincides with low-area disk counts on the actual Lagrangian $L\subset X$. 
The key observation is that 
for any interval $I=(r,s)\subset  \R$, one can scale $L$ sufficiently close to the skeleton so that all elements in $H_2(X,L)$ of Maslov index within $I$ \emph{and} sufficiently low area, come from $H_2(M,L)$. This can be improved to $(-\infty,s)$ for holomorphic disks by taking $r=-n$, because holomorphic disks of Maslov index $\mu<-n$ generically do not exist.
Given this, one shows that the proof Biran-Cornea's theorem can be carried by only using low-area curves; this forces all the possible bubbling to happen in the monotone regime. The details are left to the reader.

Currently, there seems to be a lack of interesting computations of open-closed maps for monotone Lagrangians in higher-dimensional Liouville domains---this prevents us of from providing some concrete applications of the higher-dimensional story. We believe such applications will become available in the future.

\subsection*{Structure of the article}
In Section~\ref{sec:proof_OC} we prove Theorems~\ref{th:CO}
and~\ref{th:CO_groups}, discuss their connection with the monotone case and
some generalisations. We also prove Lemma~\ref{lem:disks_in_nbhood}.

In Section~\ref{sec:T_a} we prove Lemma~\ref{lem:cotangent_bundles},
Theorem~\ref{th:T_a} and the related theorem for $\CP^1\times\CP^1$ and $\BlIII$. Then we
explain why Floer theory with bulk deformation does not readily apply to
the~$T_a\subset \CP^2$.

\subsection*{Acknowledgements} We are grateful to Paul Biran, Georgios Dimitroglou Rizell, Kenji Fukaya, Yong-Geun~Oh, Kaoru~Ono,
and~Ivan~Smith for useful discussions and their interest in this work; we are also thankful to Denis Auroux, Dusa McDuff and the referees for helping us  spot some  mistakes in the previous version of the article, and suggesting exposition improvements.

Dmitry Tonkonog~is funded by the Cambridge Commonwealth, European and
International Trust, and acknowledges travel funds from King's College,
Cambridge. Part of this paper was written during a term spent at the 
Mittag-Leffler Institute, Sweden, which provided an excellent research environment and financial support for the stay.

Renato Vianna is funded by the Herchel Smith Postdoctoral Fellowship from the 
University of Cambridge.

\section{The non-displaceability theorem and its discussion}
\label{sec:proof_OC}
In this section we prove Theorems~\ref{th:CO} and~\ref{th:CO_groups}, and further discuss them.
We conclude by proving Lemma~\ref{lem:disks_in_nbhood}, which is somewhat unrelated to the rest of the section.

\subsection{The context from usual Floer theory} 
\label{subsec:non_displ_dim4}


We start by explaining Biran-Cornea's non-displaceability criterion for monotone Lagrangians and its relationship with Theorems~\ref{th:CO} and~\ref{th:CO_groups}.
We assume that the reader is familiar with the language of {\it pearly trajectories} to be used here, and shall skip the proofs of some facts we mention if we do not use them later.

Recall that one way of defining the Floer cohomology $HF^*(L)$ of a monotone
Lagrangian $L\subset X$ uses the pearl complex of Biran and Cornea
\cite{BC07,BC09B, BC09A}; its differential counts pearly trajectories consisting of
certain configurations of Morse flowlines on $L$ interrupted by holomorphic
disks with boundary on $L$. A remark about conventions: Biran and Cornea write $QH^*(L)$ instead of $HF^*(L)$; we do not use the Novikov parameter, therefore the gradings are generally defined modulo 2.

Also recall that the basic fact\textemdash if $HF^*(L)\neq 0$, then $L$ is non-displaceable,\textemdash has no intrinsic proof  within the language of pearly
trajectories. Instead, the proof uses the isomorphism relating $HF^*(L)$
to the (historically, more classical) version of Floer cohomology that uses Hamiltonian perturbations. Nevertheless, there is
a different non-displaceability statement whose proof is carried out completely
in the language of holomorphic disks. That statement employs an additional structure, namely the maps $$ \OC\co
HF^*(L)\to QH^*(X),\quad \CO\co QH^*(X)\to HF^*(L) $$ defined by counting
suitable pearly trajectories in the ambient symplectic manifold $X$. These maps
are frequently called the open-closed and the closed-open (string) map,
respectively; note that Biran and Cornea denote them by $i_{L}$, $j_{L}$. The statement we referred to above is the following one.

\begin{theorem}[{\cite[Theorem~2.4.1]{BC09A}}]
	\label{th:OCCO_monot}
	For two monotone Lagrangian submanifolds $L,K$ of a closed monotone symplectic
	manifold $X$, suppose that the composition  
	\begin{equation}
		\label{eq:OCCO}
		HF^*(L)\xrightarrow{\OC} QH^*(X)\xrightarrow{\CO} HF^*(K)
	\end{equation}
	does not vanish. Then  $L$ and $K$ are Hamiltonian non-displaceable. \qed
\end{theorem}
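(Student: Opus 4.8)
\textbf{Proof proposal for Theorem~\ref{th:OCCO_monot}.}

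The plan is to run the standard ``gluing disks into annuli'' cobordism argument of Biran--Cornea \cite[Section 8.2.1]{BC09B}, which the authors have already flagged as the model for their Theorem~\ref{th:CO}. First I would suppose, for contradiction, that there is a Hamiltonian diffeomorphism $\psi$ with $\psi(K)\cap L=\emptyset$. The composition $\CO\circ\OC$ in \eqref{eq:OCCO} can be represented, at the chain level in the pearly model, by counting configurations that consist of a pearly trajectory on $L$ feeding into the open-closed map $\OC$ (a disk with an interior marked point landing in $X$), then the closed-open map $\CO$ (a disk on $K$ receiving that interior constraint), then a pearly trajectory on $K$. The key geometric input is that the ``waist'' of such a configuration --- the point in $X$ where $\OC$ outputs and $\CO$ inputs --- can be resolved: instead of passing through a fixed cycle in $X$, one glues the outgoing disk on $L$ directly to the incoming disk on $K$ along their interior marked points, producing a \emph{holomorphic annulus} with one boundary on $L$ and one boundary on $K$, together with a conformal (modulus) parameter $\rho\in(0,\infty)$ that records how much gluing was done.

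Second, I would study the one-dimensional moduli space obtained by letting $\rho$ range over $(0,\infty)$, with the pearly tails on $L$ and $K$ attached, cut out at the expected dimension zero on the ends after fixing the relevant Morse-theoretic inputs/outputs. Counting its boundary points gives the cobordism relation. At the end $\rho\to 0$ (long neck, annulus degenerates to two disks joined at a point) one recovers exactly the count defining $\CO\circ\OC$; by hypothesis this count is nonzero in $HF^*(K)$, hence nonzero modulo the differential, so the corresponding moduli space is genuinely nonempty. At the other end $\rho\to\infty$ the annulus has a short closed geodesic and \emph{breaks}: but since $L$ and $K$ are disjoint (after applying $\psi$, which we may do since all the structures are Hamiltonian-isotopy invariant and the Floer/pearl package is invariant under moving $K$ by a Hamiltonian isotopy), an annulus with boundary on $L\sqcup\psi(K)$ cannot degenerate by shrinking a core circle --- there is no interpolating object --- so this end is empty. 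Comparing the two ends yields $0\neq 0$, a contradiction.

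Third, I would need to check that the moduli space of annuli is regular and that its compactification has no other boundary or codimension-zero degenerations: the only configurations to worry about are (a) disk bubbling off the $L$- or $K$-boundary, which is accounted for by the pearly differentials on the two sides and hence contributes to the $\rho\to 0$ end via the algebraic identity $\CO\circ\OC$ is a chain map, and (b) breaking of the attached Morse flowlines, likewise absorbed. Monotonicity of $L$, $K$ and $X$ is what makes all these bubbling phenomena occur in the expected (top) codimension, so classical transversality suffices; this is the same mechanism that in the non-monotone setting forces the authors to impose $a+b<\min(A,B)$.

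The main obstacle I expect is the transversality and gluing analysis for the annuli: one must set up a coherent family of (possibly domain-dependent) perturbations making the annular moduli spaces and their pearly decorations regular, and prove the gluing theorem that identifies the $\rho\to 0$ stratum with broken configurations counted by $\CO\circ\OC$. Since we only invoke Theorem~\ref{th:OCCO_monot} as a black box from \cite{BC09A}, I would cite Biran--Cornea for these analytic details rather than reproduce them; the contribution of the present paper is precisely to carry out the analogous, more delicate analysis in the low-area non-monotone setting, where the monotonicity crutch is replaced by the area inequality.
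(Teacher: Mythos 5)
Your overall architecture is the right one: the paper itself does not reprove this statement (it quotes \cite[Theorem~2.4.1]{BC09A}, equivalently \cite[Theorem~8.1]{BC09B}), and its own proof of the low-area analogue, Theorem~\ref{th:CO}, follows exactly the annulus-cobordism scheme you describe — a one-parameter family of annuli with one boundary on $L$ and one on $\phi(K)$, compactified at the two extremes of the conformal parameter, with disk/flowline bubbling absorbed by the pearly differentials. So the plan is sound and matches both Biran--Cornea and the paper.

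However, there is a genuine error at the crux of the argument: your identification of the two ends of the conformal parameter. You describe the end giving $\CO\circ\OC$ as "the annulus degenerates to two disks joined at a point", and then claim the \emph{other} end is empty because "an annulus with boundary on $L\sqcup\psi(K)$ cannot degenerate by shrinking a core circle --- there is no interpolating object". But degenerating by shrinking the core circle \emph{is} the degeneration into two disks joined at an interior node (one disk on each Lagrangian), and it is perfectly possible when $L\cap K=\emptyset$ --- indeed that node passing through $X$ is precisely where the factorization through $QH^*(X)$, i.e.\ $\CO\circ\OC$, appears. So you have described the same degeneration at both ends and given it two contradictory interpretations; as written, disjointness never actually enters, and the same argument would "prove" $\CO\circ\OC=0$ for arbitrary (possibly intersecting) monotone $L,K$, which is false. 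The end that disjointness kills is the opposite one: as the modulus goes to the other extreme the domain degenerates to an annulus with a contracted \emph{arc joining the two boundary components}, and the resulting singular point must be mapped to a point of $L\cap K$ (this is item (ii), the $R=0$ stratum, in the paper's proof of Theorem~\ref{th:CO}, following \cite[Section~8.2.1]{BC09B}). Correcting this dichotomy --- core-circle breaking gives the $\CO\circ\OC$ count, boundary-arc contraction forces a point of $L\cap K$ and is excluded by the displacement hypothesis --- restores the contradiction. A secondary, more minor point: nonvanishing of the map $\CO\circ\OC$ must still be converted into a nonzero \emph{count} of rigid configurations at the corresponding end (by evaluating on suitable generators/cycles, as in the paper's Lemma~\ref{lem:configs_CO}, where in dimension four it becomes the intersection number $\OC([p_L])\cdot\OC([p_K])$); "nonzero modulo the differential" needs this pairing spelled out for the boundary-count argument to produce a number rather than a class.
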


In this paper we restrict ourselves to dimension four, so let us first discuss
the monotone setting of Theorem~\ref{th:OCCO_monot} in this dimension. Recalling
that a del~Pezzo surface has $H_1(X)=0$, we see that there are three
possible ways for (\ref{eq:OCCO}) not to vanish. 

To explain this, it is convenient to pass to chain level: let
$CF^*(L)$ be the Floer chain complex of $L$ (either the pearl complex or the Hamiltonian Floer complex) and $CF^*(X)$ be the Morse (or Hamiltonian Floer) chain complex of $X$, both equipped with Morse $\Z$-gradings. For the definition of the closed-open maps, see \cite{BC07,BC09A} in the pearl setup, and \cite{She13,Ritter2016} among others in the Hamiltonian Floer setup. Below we will stick to the setup with pearls.

First, we can consider the
topological part of (\ref{eq:OCCO}):

\begin{equation*}
 CF^0(L)\xrightarrow[\mu=0]{\OC} CF^2(X)\xrightarrow[\mu=0]{\CO} CF^2(K).
 \end{equation*} 
 In this case, as indicated by the $\mu=0$ labels, the relevant
 string maps necessarily factor through $CF^2(X)$ and are topological,
 i.e.~involve pearly trajectories containing only constant Maslov index 0 disks.
 The composition above computes the homological intersection $[L]\cdot [K]$
 inside $X$, where $[L],[K]\in H_2(X)$. If $[L]\cdot[K] \neq  0$, then $L$ and $K$ are topologically non-displaceable;
 otherwise we proceed to the next possibility;
compare Remark \ref{rmk:L.K=0}.
 Observe that we are using the cohomological grading convention: pearly trajectories of total
 Maslov index $\mu$ contribute to the degree $-\mu$ part of $\CO$, and to the
 degree $(\dim L-\mu)$ part of $\OC$ on cochain level.

The second possibility for $\CO\circ \OC$ not to vanish is via the contribution
of pearly trajectories whose total Maslov index sums to two; the relevant parts
of the string maps factor as shown below. In the examples we are aiming at, we are going to have $[L]=[K]=0$, so
the $\mu=0$ parts below will vanish on homology level.

\begin{equation*}
	\begin{array}{l}
		CF^0(L)\xrightarrow[\mu=0]{\OC}
		CF^2(X)\xrightarrow[\mu=2]{\CO}
		CF^0(K),
		\vspace{0.1cm}
		\\
		CF^2(L)\xrightarrow[\mu=2]{\OC}
		CF^2(X)\xrightarrow[\mu=0]{\CO}
		CF^2(K),
	\end{array}
\end{equation*}

The remaining part of $\CO\circ\OC$ breaks as a sum of three compositions factoring as follows:
\begin{equation}
	\label{eq:OCCO_diamond}
	\xymatrix{
		&
		CF^0(X)\ar_-\cong^{\mu=0}[dr]
		&
		\\
		CF^2(L)\ar^{\mu=2}[r]\ar^{\mu=4}[ur]\ar^-\cong_{\mu=0}[dr]
		&
		CF^2(X)\ar^{\mu=2}[r]
		&
		CF^0(K)
		\\
		&
		CF^4(X)\ar_{\mu=4}[ur]
		&
	}
\end{equation}

The labels here indicate the total Maslov index of holomorphic disks present in
the corresponding pearly trajectories; this time the $\mu=0$ parts are
isomorphisms on homology. Therefore, to compute $\CO\circ\OC|_{HF^2(L)}$ we need to know the
Maslov index~4 disks. We wish to avoid this, keeping in mind that in our
examples we will know only the Maslov index 2 disks.

To this end, we perform the following trick to ``single out'' the Maslov index~2 disk contribution in the diagram above. Let us modify the
definition of $\CO$, $\OC$ by only considering $J$-holomorphic
disks whose boundary is non-zero in $H_1(L;\Z)$ or $H_1(K;\Z)$.
Denote this modified map by $\OC\c$ and consider the composition:
\begin{equation}
	\label{eq:CO_comp_bdy_condition}
	CF^2(L)\xrightarrow[\mu=2]{\OC\c}
	CF^2(X)\xrightarrow[\mu=2]{\CO\c}
	CF^0(K)
\end{equation}
Here the modified maps $\OC\c$, $\CO\c$ by definition count pearly trajectories
contributing to the middle row of (\ref{eq:OCCO_diamond}), i.e.~containing a
single disk, of Maslov index~2, with the additional condition that the boundary
of that disk is homologically non-trivial. The superscript $(2)$ reflects that
we are only considering Maslov index~2 trajectories, ignoring the Maslov index~0
and~4 ones; the condition about non-zero boundaries is not reflected by our
notation. We claim that if
composition~(\ref{eq:CO_comp_bdy_condition}) does not vanish, then $K,L$ are
non-displaceable.

This modified non-displaceability criterion we have just formulated is the specialisation of Theorem~\ref{th:CO} to the case when both Lagrangians are monotone. Indeed, 
if both $L,K$ are monotone and $[L]\cdot [K]=0$, then 
$$\OC\c([p_L])\cdot \OC\c([p_K])\neq 0$$ if and only if the composition (\ref{eq:CO_comp_bdy_condition}) is non-zero; compare Lemma~\ref{lem:configs_CO}.
A proof can also be traced using the original approach \cite[Theorem~8.1]{BC09B}---see the proof of Theorem~\ref{th:CO} in Subsection~\ref{subsec:Proof_ThCO} for further details.

Speaking of our second result, Theorem~\ref{th:CO_groups}, in the monotone case it corresponds to another  refinement of Biran-Cornea's theorem which does not seem to have appeared in the literature. Note that this refinement is not achieved by deforming the Floer theories of $L$ and $K$ by local systems.

\begin{remark} Recall that, for a two-dimensional monotone Lagrangian
	$L$ equipped with the trivial local system, we have 
	$$\sum_j \bd[D_j^L]=0$$ if and
	only if $HF^*(L)\neq 0$, and in the latter case $HF^*(L)\cong H^*(L)$. 
	Indeed, $\sum_j \bd[D_j^L]$
	computes the Poincar\'e dual of the Floer differential $d([p_L])$
	where $[p_L]$ is the generator of $H^2(L)$;
	if we pick a perfect Morse function on $L$, then $p_L$ is geometrically realised by its maximum. 
	If $d([p_L])=0$, then by duality the unit is not hit by the differential, hence $HF^*(L)\neq 0$. For a non-monotone $L$, the condition $\sum_i
	\bd[D_i^L]=0$ from Equation~(\ref{eq:disk_low_area_cancel_bdies}) above  is a natural low-area version of the non-vanishing of Floer cohomology. 
\end{remark}

\begin{remark}
	\label{rem:OC_def_upto}
Recall that $\OC\clow([p_L])\in H_2(X)$ is well-defined up to $[L]$ as explained in Section~\ref{sec:intro}. 
This is analogous to a well known phenomenon in monotone Floer theory: 
 recall that there
 is no canonical identification between $HF^*(L)$ and $H^*(L)$, even when they
 are abstractly isomorphic \cite[Section~4.5]{BC09A}. In particular, $HF^*(L)$
 is only $\Z/2$-graded and the element $[p_L]\in HF^*(L)$ corresponding to the
 degree 2 generator of $H^2(L)$ is defined up to adding a multiple of the unit
 $1_L\in HF^*(L)$. Recall that $\OC(1_L)$ is dual to $[L]\in H_2(X)$, and this
 matches with the fact that $\OC([p_L])$, as well as $\OC\c([p_L])$, is defined
 up to $[L]$. \end{remark}

\begin{remark}
	Charette~\cite{Cha15B} defined quantum Reidemeister torsion for monotone
	Lagrangians whose Floer cohomology vanishes. While it is possible that his
	definition generalises to the non-monotone setting, making our tori
	$T_a\subset \CP^2$ valid candidates as far as classical Floer theory is concerned, 
	it is shown in \cite[Corollary 4.1.2]{Cha15B} that quantum
	Reidemeister torsion is always trivial for tori. 
\end{remark}

\subsection{Proof of Theorem~\ref{th:CO}} \label{subsec:Proof_ThCO}
Our proof essentially follows~\cite[Theorem~8.1]{BC09B}  with the following
differences: we check that certain unwanted bubbling, impossible in the monotone
case, does not occur in our setting given that $a+b< \min(A,B)$; we include an
argument which ``singles out'' the contribution of Maslov index~2 disks with
non-trivial boundary from that of Maslov index~4 disks; and relate the string
invariants $\OC\clow([p_L])$, $\OC\clow([p_K])$ defined in
Section~\ref{sec:intro} to the ones appearing more naturally in pearly
trajectory setup. We assume that the reader is familiar with the setup moduli spaces of pearly trajectories \cite{BC09B}.

\begin{remark}
 We point out that  \cite[Theorem 8.1]{BC09B} also appears as \cite[Theorem~2.4.1]{BC09A}, and in the latter reference the authors take a different approach to a proof based on superheaviness.
\end{remark}

\begin{lemma}
	\label{lem:OC_invt}
Under the assumptions of Subsection \ref{subsec: DefOC_low}, the string invariants 
\linebreak[4]
$\OC\clow([p_L])$ and $\OC\clow([p_L],S_L)$ are independent of 
the choice of $J$ and the marked point $p_L$.  
\end{lemma}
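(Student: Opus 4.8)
The plan is to prove invariance by a standard cobordism argument in the space of choices, exploiting the fact that $\OC\clow([p_L])$ is by definition only a count of least-area Maslov index~$2$ disks with non-trivial boundary through a point, which is governed by a $0$-dimensional moduli space. First I would fix two generic choices $(J_0,p_L^0)$ and $(J_1,p_L^1)$, and connect them by a generic path $(J_t,p_L^t)$, $t\in[0,1]$. The relevant parametrised moduli space $\M$ consists of pairs $(t,D)$ where $D$ is a $J_t$-holomorphic Maslov index~$2$ disk of area $a$ through $p_L^t$ with $[\bd D]\neq 0\in H_1(L)$; for a generic path this is a $1$-dimensional manifold with boundary, the boundary being exactly the fibres over $t=0,1$, which compute $\sum_i[D_i^L]$ for the two respective choices. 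Summing the homology classes $[D]\in H_2(X,L)$ over the boundary components and using $\partial\M$ as a cobordism, one gets that $\sum_i[D_i^L]\in H_2(X,L)$ is independent of the choice \emph{provided $\M$ is compact}. Since $\OC\clow([p_L])$ is defined only up to $[L]$, and the two possible lifts to $H_2(X)$ differ by an element of the image of $H_2(L)$, the equality of the images in $H_2(X,L)$ is exactly what is needed.

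The compactness of $\M$ is where the hypotheses of Subsection~\ref{subsec: DefOC_low} enter, and this is the main obstacle. A sequence in $\M$ could in principle degenerate by (a) a disk bubbling off another disk, (b) a sphere bubbling, or (c) the marked point escaping to the boundary of the disk — but (c) is harmless as $p_L^t$ stays on $L$. For (a): any disk bubble carries positive area and positive Maslov index (Maslov indices of holomorphic disks on an orientable Lagrangian are even and, since there are no nonconstant disks of index~$\le 0$ generically, at least $2$), so a two-component degeneration would have total area $\ge a+a' > a$ unless one component is constant; constant disk components are excluded because the total area is exactly $a$ and each nonconstant piece has area $\ge a$ (here one uses that $a$ is the \emph{minimal} positive area of index~$2$ classes, and additivity of area and Maslov index). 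Hence no disk bubbling. For (b): a sphere bubble has $c_1\ge 1$ by a genericity/positivity argument — actually one uses that in dimension~$4$, for generic $J$, simple holomorphic spheres have $c_1\ge 1$ and the disk through $p_L^t$ together with a sphere bubble would have Maslov index $\ge 2+2c_1 \ge 4 > 2$; alternatively the area would jump. So sphere bubbling is also excluded. Therefore $\M$ is compact, and the cobordism argument goes through.

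Two further points need care. First, one must know the moduli spaces are cut out transversely for a generic path; this is the usual Sard–Smale argument for somewhere-injective disks, and in dimension~$4$ with Maslov index~$2$ one checks multiply-covered configurations do not occur in this dimension (a multiple cover of a disk of area $a/m$ and index $2/m$ is impossible since $a$ is minimal and the index must be a positive even integer). Second, the sum $\sum_i \partial[D_i^L]=0\in H_1(L;Q)$, i.e.~Condition~(\ref{eq:disk_low_area_cancel_bdies}), is assumed; but actually it is \emph{also} invariant for the same cobordism reason — the $1$-manifold $\M$ provides a cobordism between the boundary-cycle configurations — so if it holds for one choice it holds for all, and the class $\OC\clow([p_L])\in H_2(X)/[L]$ is well-defined throughout.

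For the refined invariant $\OC\clow([p_L],S_L)$, the identical argument applies after noting that the boundary homology class over $\k$ of a $J_t$-holomorphic disk varies continuously in $t$, hence is locally constant along connected components of $\M$; so each connected component of $\M$ lies entirely over a single affine subspace $S_L+l$. Thus $\M$ decomposes as a disjoint union over the values of $l\in H_1(L;\k)$, and the cobordism argument can be run separately on the sub-union of components lying over $S_L$ itself. This shows $\sum_{D_i^L:[\bd D_i^L]\in S_L}[D_i^L]\in H_2(X,L)$ is choice-independent, which is precisely the claim, and simultaneously shows Condition~(\ref{eq:disk_groups_cancel_bdies}) is preserved. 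I expect the only genuinely delicate step to be the exclusion of bubbling, i.e.~verifying that $a$ being the least positive area of index~$2$ classes — together with positivity of area and evenness/positivity of the Maslov index of holomorphic disks in the orientable $4$-dimensional setting — really does rule out every codimension-one degeneration; everything else is routine parametrised transversality and cobordism bookkeeping.
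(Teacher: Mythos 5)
Your overall strategy---connect the two choices by a generic path $(J_t,p_L^t)$ and show that the area-$a$, Maslov index~2 disks with nontrivial boundary cannot degenerate along the path---is the same as the paper's argument, just spelled out as an explicit parametrized cobordism (the paper also handles $p_L$ by moving the point along a generic path). The refinement for $\OC\clow([p_L],S_L)$ via local constancy of the boundary class on components of the parametrized moduli space is likewise consistent with the paper. However, the justification you give at the crucial step, the exclusion of disk bubbling, does not work as written. You assert that ``there are no nonconstant disks of index $\le 0$ generically,'' so every bubble has Maslov index at least $2$. For index $<0$ this is correct for simple disks (and for non-simple ones only after invoking the decomposition theorems of Kwon--Oh and Lazzarini, which you omit and which the paper cites), but for index exactly $0$ it is \emph{false} in the one-parameter setting: simple Maslov-0 disks have expected dimension $-1$, so they do appear at isolated parameter values of a generic path---this is precisely the codimension-one wall the lemma must rule out. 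Consequently the configuration ``nonconstant $\mu=0$ bubble attached to a $\mu=2$ disk'' cannot be dismissed by genericity, and your fallback bookkeeping ``each nonconstant piece has area $\ge a$'' is unjustified for the $\mu=0$ piece, since $a$ in \eqref{eq:defn_a_dim4} only bounds areas of $\mu=2$ classes. (Your exclusion of constant components by this inequality is also off the mark; ghost bubbles are ruled out by stability of the limit, not by area.)

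The correct way to kill this configuration---and the one the paper uses---is to apply the minimality of $a$ to the \emph{other} component: in a splitting of total Maslov index $2$ and total area $a$ containing a nonconstant $\mu=0$ bubble, the $\mu=2$ component would have positive area strictly less than $a$, contradicting \eqref{eq:defn_a_dim4} (equivalently: the $\mu=2$ piece has area $\ge a$, forcing the bubble to have area $\le 0$). This is in effect what your inequality ``total area $\ge a+a'>a$'' amounts to, but the reasons you give for it are the wrong ones, and they are exactly the point where the non-monotone situation differs from a routine genericity argument. With that step repaired, and with the Kwon--Oh/Lazzarini input recorded to exclude non-simple negative-index disks (note also that non-simple disks decompose into simple pieces rather than being literal multiple covers, so your ``area $a/m$, index $2/m$'' remark should be phrased through that decomposition), your cobordism argument coincides with the paper's proof.
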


\begin{proof}
First, we claim that for a generic 1-parametric family of almost complex
structures, $L$ will not bound
holomorphic disks of Maslov index $\mu<0$. Indeed, for simple disks this follows
for index reasons (recall that $\dim X=4$); next, non-simple disks with $\mu<0$
must have an underlying simple disk with $\mu<0$ by the decomposition theorem of
Kwon and Oh \cite{KO00} and Lazzarini \cite{La00}, so the non-simple ones do not
occur as well.

Therefore, the only way disks with $\mu=2$ and area $a$ can bubble is into a
stable disk consisting of $\mu=0$ and $\mu=2$ disks; the latter $\mu=2$ disk
must have positive area less than $a$. However, such $\mu=2$ disks do not exist
by Condition~(\ref{eq:defn_a_dim4}). We conclude that Maslov index~2, area $a$
disks cannot bubble as we change $J$, and because the string invariants are
defined in terms of these disks, they indeed do not change. 
A similar argument (by moving the point along a generic path) shows the independence of $p_L$.
\end{proof}	

\begin{remark} \label{rmk:LemSimilarto}
 In the monotone case, the fact that the counts of Maslov index~2 holomorphic disks 
 was first pointed out in \cite{ElPo93}. A rigorous proof uses the
 works \cite{KO00,La00} as above, and is well known. Similar results for
 (possibly) non-monotone Lagrangians in dimension~4 appear in \cite[Lemmas~2.2,
 2.3]{Cha15A}. 
 \end{remark}

Suppose there exists a Hamiltonian diffeomorphism $\phi$ such that $\phi(L)\cap
K=\emptyset$, and redenote $\phi(L)$ by $L$, so that $L\cap K=\emptyset$.


Pick generic metrics and Morse functions $f_1,f_2$ on $L,K$. We assume that the
functions $f_1,f_2$ are perfect (it simplifies the proof, but is not essential);
such exist because $L,K$ are two-dimensional and orientable.  Consider the
moduli space $\M$ of configurations (``pearly trajectories'') of the three types
shown in Figure~\ref{fig:def_Moduli}, with the additional condition that the
total boundary homology classes of these configurations are non-zero both in
$H_1(L;\Z)$ and $H_1(K;\Z)$.  (By writing ``total'' we mean that if the
configuration's boundary on a single Lagrangian has two components, their sum
must be non-zero.)

\begin{figure}[h]
	\includegraphics{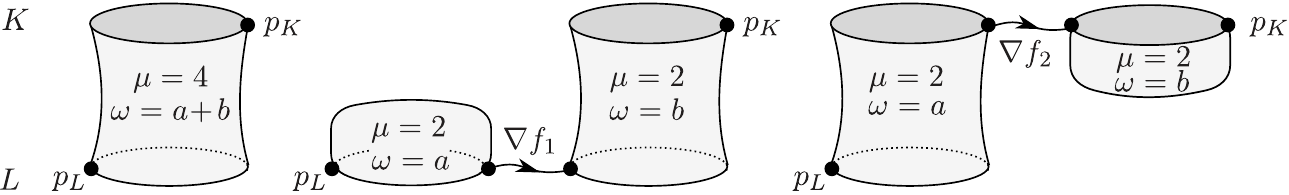}
	\caption{The moduli space $\M$ consists of pearly trajectories of these types.}
	\label{fig:def_Moduli}
\end{figure}

Here are the details on the pearly trajectories from Figure~1 that we use to define $\M$:
\begin{itemize}
 \item The Maslov index and the
area of each curve is prescribed in the figure;
\item  The conformal parameter of each annulus is allowed to take
any value $R\in(0,+\infty)$. Recall that the domain of an annulus with conformal parameter $R$ can be realised as
$\{z\in \C:1\le |z|\le e^R\}$;
\item Every flowline has a time-length parameter $l$
that can take any value $l\in[0,+\infty)$. 
The configurations with a contracted flowline (i.e.~one with $l=0$) correspond to interior points of $\M$, because gluing the disk to
the annulus is identified with $l$ becoming negative;
\item The annulus has two marked points, one on each boundary component, that are \emph{fixed in the domain}. This means that if we identify an annulus with conformal parameter $R$ with $\{z\in \C:1\le |z|\le e^R\}$, then the two marked points can be e.g.~$1$ and $e^R$;
\item The disks also have marked points as shown in the figure. Because the disks are considered up to reparametrisation, the marked points can also be assumed to be fixed in the domain;
\item  The curves evaluate to the fixed points
fixed points $p_K\in K$, $p_L\in L$ at the marked points as shown in Figure~\ref{fig:def_Moduli}. They satisfy the Lagrangian boundary conditions as shown  in Figure~\ref{fig:def_Moduli};
\item  The non-vanishing of total boundary homology classes (stated above) must be satisfied.
\end{itemize}

Recall that the Fredholm index of unparametrised holomorphic annuli without
marked points and with free conformal parameter equals the Maslov index.
Computing the rest of the indices, one shows that $\M$ is a smooth 1-dimensional
oriented manifold \cite[Section~8.2]{BC09B}, assuming $\M$ is regular. The
regularity of the annuli can be achieved by a small domain-dependent
perturbation of the $J$-holomorphic equation;  we give a detailed discussion of it in the
next subsection. Now, we continue with the proof assuming the regularity of the annuli (and hence of $\M$, because the transversality for disks is classical).


The space $\M$ can be compactified by adding configurations with broken
flowlines as well as configurations corresponding to the conformal parameter $R$ of the annulus becoming $0$ or $+\infty$.
We describe each of the three types of configurations separately and determine
their signed count.

{\it (i)} The configurations with broken flowlines are shown in Figure~\ref{Fig:M_Morse_break}. As before, they are subject to the condition that the total boundary homology classes of the configuration are non-zero both in $H_1(L;\Z)$ and $H_1(K;\Z)$. The annuli
have a certain conformal parameter $R_0$ and the breaking is an index 1 critical
point of $f_i$ \cite[Section~8.2.1, Item (a)]{BC09B}.

\begin{figure}[h]
	\includegraphics{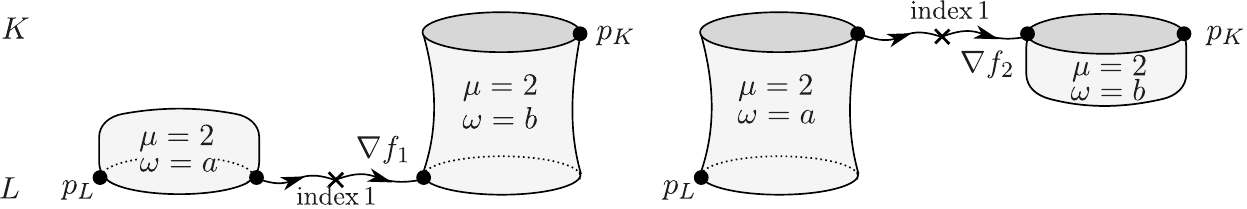}
	\caption{Configurations with broken flowlines, called type (i).}
	\label{Fig:M_Morse_break} 
\end{figure}

\noindent The count of the sub-configurations consisting of the disk and the
attached flowline vanishes: this is a Morse-theoretic restatement
Condition~(\ref{eq:disk_low_area_cancel_bdies}) saying that $\sum_i \bd
[D_i^L]=\sum_j \bd[D_j^K]=0$. Hence (by the perfectness of the $f_i$) the count of
the whole configurations in Figure~\ref{Fig:M_Morse_break} also vanishes, at
least if we ignore the condition of non-zero total boundary. Separately, the
count of configurations in Figure~\ref{Fig:M_Morse_break} whose total boundary
homology class is zero either in $L$ or $K$, also vanishes. Indeed, suppose for
example that the $\omega=a$ disk in Figure~\ref{Fig:M_Morse_break} (left) has
boundary homology class  $\l \in H_1(L;\Z)$ and the lower boundary of the annulus
has class $-\l$; then the count of the configurations in the figure with that
disk and that annulus equals the homological intersection $(-\l)\cdot \l=0$,
since $L$ is an oriented surface.  We
conclude that the count of configurations in the above figure whose total
boundary homology classes are {\it non-zero}, also vanishes.

{\it (ii)} The configurations with $R=0$ contain a curve whose domain is an
annulus with a contracted path connecting the two boundary components. The
singular point of this domain must be mapped to an intersection point $K\cap L$,
so these configurations do not exist if $K\cap L=\emptyset$ \cite[Section~8.2.1, Item (c)]{BC09B}.  

{\it (iii)} The configurations with $R=+\infty$ correspond to an annulus
breaking into two disks, one with boundary on $K$  and the other with boundary on
$L$ \cite[Section~8.2.1, Item (d)]{BC09B}. One of the disks can be constant, and the possible configurations are shown
in Figure~\ref{fig:config_R_infty}.

\begin{figure}[h]
	\includegraphics{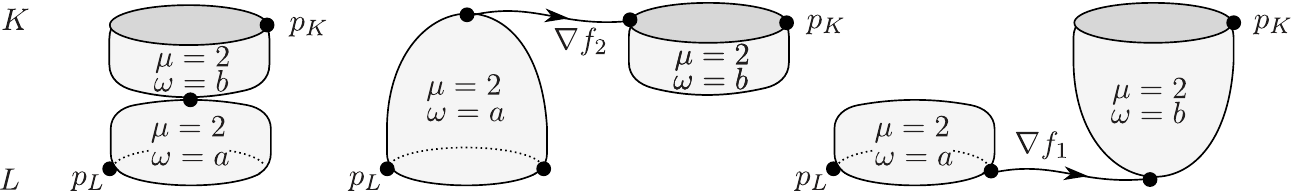}
	\caption{The limiting configurations when $R=+\infty$, called type (iii).}
	\label{fig:config_R_infty}
\end{figure}

In fact, there is another potential annulus breaking at $R=+\infty$ that we have
ignored: the one into a Maslov index 4 disk on one Lagrangian and a (necessarily
constant) Maslov index 0 disk on the other Lagrangian, see
Figure~\ref{fig:config_R_infty_unwanted}. This broken configurations cannot
arise from the configurations in $\M$ by the non-zero boundary condition imposed
on the elements of this moduli space. The fact that a Maslov index 0 disk has to
be constant is due to the generic choice of $J$.

\begin{figure}
	\includegraphics{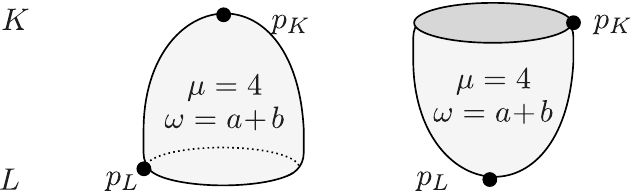}
	\caption{The limiting configurations for $R=+\infty$ which are impossible by the non-zero boundary condition.}
	\label{fig:config_R_infty_unwanted}
\end{figure}

\begin{lemma}
	\label{lem:configs_CO}
	The count of configurations in Figure~\ref{fig:config_R_infty} equals $\OC\clow([p_L])\cdot \OC\c([p_K])$ as defined in Section~\ref{sec:intro}. 
\end{lemma}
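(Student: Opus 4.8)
The plan is to unwind the definitions on both sides and match them term by term. Recall that the configurations in Figure~\ref{fig:config_R_infty} arise as the $R=+\infty$ limit: the annulus of area $a+b$ degenerates into a holomorphic disk $D^L$ with boundary on $L$ and a holomorphic disk $D^K$ with boundary on $K$, glued at a single boundary point, with one of the two disks possibly constant (contributing a Maslov~0 piece), and with the two remaining marked points of the original annulus still mapping to $p_L$ and $p_K$ respectively, possibly through a Morse flowline of $f_1$ or $f_2$. The non-zero total boundary condition forces the non-constant disk on each side to have homologically non-trivial boundary, hence exactly Maslov index~2 by the area and positivity considerations already established; this is precisely the class of disks counted by $\OC\clow([p_L])$ and $\OC\c([p_K])$.

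First I would set up the count. Enumerate the broken configurations: on the $L$-side one has a Maslov index~2 disk $D_i^L$ of area $a$ passing through $p_L$ (after a Morse flowline from the maximum of $f_1$, which by perfectness just contributes the fundamental class and imposes no constraint beyond $p_L\in\partial D_i^L$), and similarly on the $K$-side a Maslov index~2 disk $D_j^K$ of area $b$ through $p_K$; the two disks are attached at a point $q$ which must lie on $\partial D_i^L\cap\partial D_j^K$ as a point of $L\cap K$ — wait, $L$ and $K$ are disjoint, so actually the gluing point records an intersection of the \emph{images} of the two boundary arcs, i.e.\ a point of $D_i^L\cap D_j^K$ in $X$. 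So the count over all $i,j$ is $\sum_{i,j} \#(D_i^L\cap D_j^K)$, with appropriate signs, which is exactly the homological intersection number $\big(\sum_i[D_i^L]\big)\cdot\big(\sum_j[D_j^K]\big)$ computed in $X$.

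Next I would justify that this homological intersection in $X$ is well-defined and equals $\OC\clow([p_L])\cdot\OC\c([p_K])$. By definition $\OC\clow([p_L])$ is a lift to $H_2(X)$ of $\sum_i[D_i^L]\in H_2(X,L)$, and likewise for $K$; under the hypothesis $[L]=[K]=0$ the intersection pairing descends to a well-defined pairing between such lifts (two lifts differ by $[L]$ resp.\ $[K]$, which pair to zero with everything by $[L]=[K]=0$ and by $[L]\cdot[K]=0$). The geometric count $\sum_{i,j}\#(D_i^L\cap D_j^K)$ computes this pairing: one can perturb the cycles representing $\sum_i[D_i^L]$ and $\sum_j[D_j^K]$ to be transverse and count intersections, and the holomorphic disks (being honest pseudoholomorphic, hence with intersections counted positively where they are transverse, and perturbable elsewhere by genericity of $J$) realise exactly this, up to the boundary contributions which cancel by Condition~(\ref{eq:disk_low_area_cancel_bdies}). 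I would invoke here the standard fact that for disks meeting the Lagrangians only along their boundaries, the mod-boundary intersection number in $H_2(X,L)\times H_2(X,K)$ agrees with the count of interior intersection points once the boundaries are arranged to be disjoint, which is possible because $L\cap K=\emptyset$ and each disk's boundary lies on its own Lagrangian.

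The main obstacle I expect is the sign and orientation bookkeeping: one must check that the orientation $\M$ inherits as a moduli space of pearly trajectories (from the chosen orientations of the moduli of disks, the evaluation maps, and the gluing of the annulus) induces on the $R=+\infty$ boundary stratum exactly the sign convention that makes $\sum_{i,j}\#(D_i^L\cap D_j^K)$ equal to the homological intersection product rather than its negative or some twisted version. This is the kind of computation that is routine in principle but delicate in practice; I would handle it by comparing with the orientation conventions already fixed in \cite[Section~8.2]{BC09B} for the analogous monotone configuration, noting that singling out the Maslov index~2 disks with non-trivial boundary does not affect the orientation analysis, since that is a purely combinatorial restriction on which components of $\M$ we retain. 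A secondary point to be careful about is the role of the Morse flowlines attached at $p_L$ and $p_K$: because $f_1,f_2$ are perfect and $p_L,p_K$ are realised as maxima, these flowlines contribute trivially (the unstable manifold of the maximum is all of $L$ resp.\ $K$), so the evaluation constraint at $p_L,p_K$ becomes simply "$p_L\in\partial D_i^L$" and "$p_K\in\partial D_j^K$", consistent with the definition of $\OC\clow$ in Section~\ref{sec:intro}; I would state this explicitly to close the identification.
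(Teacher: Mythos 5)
Your identification of the $R=+\infty$ boundary count with the disk--disk intersection number $\sum_{i,j}\#(D_i^L\cap D_j^K)$ misses two of the three configuration types in Figure~\ref{fig:config_R_infty}, and the topological claim you invoke to compensate is false. Besides the breaking of the Maslov~4 annulus into two non-constant disks (whose node, incidentally, is an \emph{interior} point of each disk, so no appeal to $L\cap K=\emptyset$ is needed there), the boundary of $\M$ at $R=+\infty$ also contains the degenerations of the second and third pearly types of Figure~\ref{fig:def_Moduli}: the Maslov~2 annulus breaks into a non-constant disk on one Lagrangian and a \emph{constant} disk on the other, yielding, say, a disk $D_j^K$ through $p_K$ whose interior meets $L$ at a point joined by a finite-length $\nabla f_1$-flowline to $\bd D_i^L$, with $p_L\in\bd D_i^L$. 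These flowlines are not attached to critical points of $f_1$, so perfectness does not trivialise them; they impose a genuine incidence condition and contribute to the count. Correspondingly, your ``standard fact'' that the pairing of the lifts is computed by interior intersections of the disks alone does not hold: the naive intersection pairing between relative classes in $H_2(X,L)$ and $H_2(X,K)$ is not well defined even when $L\cap K=\emptyset$ (already in $S^4$ one may change a relative cycle rel $L$ by a null-homologous $2$-sphere linking the boundary curve on $K$ of the other relative cycle, which changes the count), and the lift $\OC\clow([p_L])\in H_2(X)$ involves a choice of $2$-chain in $L$ capping $\sum_i\bd D_i^L$, whose intersections with the $K$-disks cannot be discarded.

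The paper's proof supplies exactly this missing ingredient: it constructs the capping chain $C^L\subset L$ as the chain swept by the $\nabla f_1$-flow applied to the boundaries $\bd D_i^L$, using Condition~(\ref{eq:disk_low_area_cancel_bdies}) and perfectness of $f_1$ to show $\bd C^L=\sum_i\bd D_i^L$, so that $\OC\clow([p_L])$ is represented by $(\cup_i D_i^L)\cup C^L$, and similarly for $K$. Expanding the intersection number of these explicit cycles gives four chain-level terms; $C^L\cdot C^K=0$ because $L\cap K=\emptyset$ and the chains lie on the Lagrangians, while the remaining three terms match precisely the three configurations of Figure~\ref{fig:config_R_infty} --- the cross terms $(\cup_i D_i^L)\cdot C^K$ and $C^L\cdot(\cup_j D_j^K)$ being exactly the flowline configurations your count drops. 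As written, your argument obtains the correct final equality only by omitting these contributions from both sides at once, so it does not establish the lemma; to repair it you would need to introduce the swept chains (or an equivalent capping construction) and match them to the constant-disk-plus-flowline strata.
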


\begin{proof} In the right-most configuration in
Figure~\ref{fig:config_R_infty}, forget the $\omega=b$ disk so that one endpoint
of the $\nabla f_1$-flowline becomes free; let $C^L$ be the singular 2-chain on
$L$ swept by these endpoints.  In other words, for each disk $D_i^L$, consider the
closure of 
$$\CC_i^L = \{ \phi_l(x) \in L\ :\  x \in \bd D_i^L,\ \phi_l \ \text{is the time-}l \ \text{flow of}
\ \nabla f_1,\ l \ge 0 \}$$ oriented so that the component of $\bd \CC_i^L$ corresponding
to $\bd D_i^L$ has the same orientation as $\bd D_i^L$. Then $C^L = \bigcup_i 
\overline{\CC_i^L}$.

We claim that $\bd C^L=\sum_{i}\bd
D_i^L$ on chain level. Indeed, the boundary $\bd C^L$ corresponds to zero-length
flowlines that sweep $\sum_{i}\bd D_i^L$, and to flowlines broken at an index
1 critical point of $f_1$, shown below:
	\vspace{0.1cm}
	
	\noindent
	\hspace*{\fill}\includegraphics{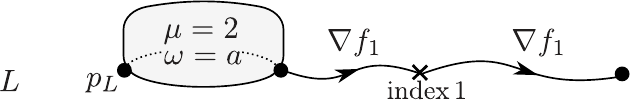}
	\hspace*{\fill}
	\vspace{0.1cm}
	
	\noindent
	The endpoints of these configurations sweep the zero 1-chain. Indeed, we are given that
	$\sum_{i}\bd[D_i^L]=0$ so the algebraic count of the appearing index~1 critical points represents a null-cohomologous Morse cocycle, therefore this count equals zero by perfectness of $f_1$. It follows that $\bd C^L=\sum_{i}\bd
	D_i^L$.
 Similarly, define the 2-chain $C^K$ on $K$, $\bd C^K=\sum_j\bd D_j^K$, by
 forgetting the $\omega=a$ disk in the second configuration of type (iii) above,
 and repeating the construction. It follows that the homology class
 $\OC\clow([p_L])$ from Subsection~\ref{subsec: DefOC_low} can be represented by the cycle 
 $$(\cup_i D_i^L)\cup C^L,$$ and
 similarly $\OC\c([p_K])$ can be represented by $(\cup_j D_j^K)\cup C^K$. 
 This intersection number can be expanded into four chain-level
 intersections: 
 $$ 
 \OC\clow([p_L])\cdot \OC\clow([p_K])=(\cup_i D_i^L)\cdot (\cup_j
 D_j^K)+(\cup_i D_i^L)\cdot C^K+ C^L\cdot(\cup_j D_j^K)+C^L\cdot C^K. 
 $$ 
 The last summand vanishes because $L\cap K=\emptyset$, and the other summands
 correspond to the three configurations of type (iii) pictured earlier. 
\end{proof} 

\begin{remark}	
 Note that the equality between the intersection number $\OC\clow([p_L])\cdot
 \OC\clow([p_K])$ and the count of the $R=+\infty$ boundary points of $\M$ holds
 integrally, i.e.~with signs. This follows from the general set-up of
 orientations of moduli spaces in Floer theory, which are consistent with taking
 fibre products and subsequent gluings, see e.g.~\cite[Appendix C]{Ab10} for the case most relevant to us. For example, in our case the signed
 intersection points between a pair of holomorphic disks can be seen as the
 result of taking fibre product along evaluations at interior marked points;
 therefore these intersection signs agree with the orientations on the moduli
 space of the glued annuli. 
 \end{remark}

If the moduli space $\M$ is completed by the above configurations (i)---(iii),
it becomes compact. Indeed, by the condition $a+b< \min(A,B)$, Maslov index 2
disks on $L$ with area higher than $a$ cannot bubble. Disks of Maslov index
$\mu\ge 4$ cannot bubble (for finite $R$) on either Lagrangian because the rest
of the configuration would contain an annulus of Maslov index $\mu \le 0$
passing through a fixed point on the Lagrangian, and such configurations have
too low index to exist generically (the annuli can be equipped with a generic
domain-dependent perturbation of $J$, hence are regular). Similarly, holomorphic
disks of Maslov index $\mu\le 0$ cannot bubble as they do not exist for generic
perturbations of the initial almost complex structure $J$. (This is true for
simple disks by the index formula, and follows for non-simple ones from the
decomposition theorems \cite{La11,KO00}, as such disks must have an underlying
simple disk with $\mu\le 0$.) Side bubbles of Maslov index 2 disks (not
carrying a marked point with a $p_K$ or a $p_L$ constraint) cannot occur because
the remaining Maslov index 2 annulus, with both the $p_K$ and $p_L$ constraints,
would not exist generically. Finally, as usual, sphere bubbles cannot happen in a
1-dimensional moduli space because such bubbling is a codimension 2 phenomenon in the monotone case.

By the compactness of $\M$, the signed count of its boundary points (i)---(iii)
equals zero. We therefore conclude from Lemma~\ref{lem:configs_CO} and the
preceding discussion that $\OC\clow([p_L])\cdot \OC\c([p_K])=0$, which contradicts the
hypothesis of Theorem~\ref{th:CO}.\qed

\subsection{Transversality for the  annuli}
\label{subsec:trans}
In the proof of
Theorem~\ref{th:CO}, we mentioned that in order to make the annuli appearing in the moduli space $\M$
regular, we need to use a domain-dependent perturbation of the $J$-holomorphic
equation on those annuli. We wish to make this detail explicit. 


First, let us recall the moduli space of domains used to define $\M$, see Figure~\ref{fig:def_Moduli}, and its compactification. Recall that the annuli in $\M$ were allowed to have free conformal parameter 
$$R\in(0,+\infty),$$
and the limiting cases $R=0$, $R=+\infty$ were included in the compactifcation of $\M$  as explained in the proof above.

\begin{figure}[h]
	\includegraphics[]{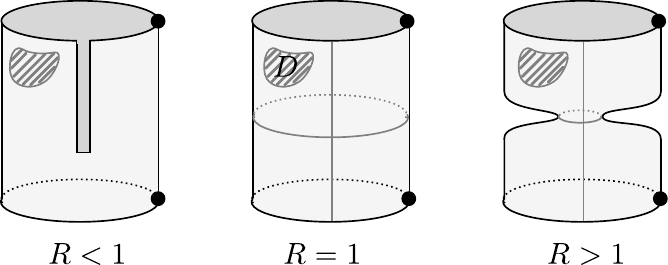}
	\caption{The annuli $A_R$ with various conformal parameters, and a compact disk $D=D_R$ supporting the domain-dependent almost complex structure $J_D$.}
	\label{fig:perturb_annuli}
\end{figure}
Now, for each conformal parameter $R\in(0,+\infty)$, we pick some closed disk inside the corresponding annulus:
$$
D_R\subset A_R=\{1\le |z|\le e^R\}
$$
depending smoothly on $R$, and disjoint from the regions where the domains $A_R$ degenerate as $R\to+\infty$ or $R\to 0$. To see what the last condition means, recall that the annulus $A_R$ used for defining $\M$ has two fixed boundary marked points; we are assuming they are the points $1$ and $e^R$. With these marked points, the annulus has no holomorphic automorphims. We can then uniformise the family of annuli $\{A_R\}_{R\in(0,\infty)}$ as shown in Figure~\ref{fig:perturb_annuli}. To do so, we start with an annulus of a fixed conformal parameter, say $R=1$. The annuli with $R<1$ are obtained from the $R=1$ annulus by performing a slit along a fixed line segment $C\subset A_{1}$ connecting the two boundary components. The annuli for $R>1$ are obtained by stretching the conformal structure of the $R=1$ annulus in a fixed neighbourhood of some core circle $S\subset A_{1}$.  In this presentation, all annuli $A_{R}$ have a common piece of domain away from a neighbourhood of $C\cup S$, and it is notationally convenient to choose $D_R\subset A_R$ to be a fixed closed disk $D$ inside that common domain, for each $R$. See Figure~\ref{fig:perturb_annuli}.

Next, let $\J$ denote the space of compatible almost complex structures on $X$. Let $J\in \J$ be the almost complex structure  we have been using in the proof of Theorem~\ref{th:CO}---namely, we are given that the relevant $J$-holomorphic disks are regular.

 Now pick some domain-dependent almost complex structure
$$
J_{A_R}\in C^{\infty} (A_R,\J),\quad J_{A_R}\equiv J \text{ away from }D_R.
$$
Using the above presentation where all the $D_R$ are identified with a fixed disk $D$, it is convenient to take $J_{A_R}|_{D_R}$ to be the same domain-dependent almost complex structure 
$$J_D\in C^\infty(D,\J)$$ for all $R$, 
such that $J_D\equiv J$ near $\bd D$.

In our (modified) definition of $\M$, we use the following domain-dependent perturbation of the $J$-holomorphic equation for each of the appearing annuli:
\begin{equation}
\label{eq:perturbed}
du+J_{A_R}(u)\circ du\circ j=0,\quad u\co A_R\to X
\end{equation} 
Here $j$ is the complex structure on $A_R$.
Observe that
(\ref{eq:perturbed}) restricts to the usual $J$-holomorphic equation, with the
constant $J$, away from $D_R$. For this reason, the equation is $J$-holomorphic
in the neighbourhoods of the nodal points formed by: domain degenerations as
$R\to +\infty$ and $R\to 0$, and the side bubbling of holomorphic disks. The
standard gluing and compactness arguments work in this setting, compare
e.g.~Sheridan~\cite[Proof of Proposition~4.3]{She11}; therefore, the $R=0$ and
$R=+\infty$ compactifications of $\M$ from the proof of Theorem~\ref{th:CO} are
still valid in our perturbed case, as well as the fact that the flowline length
0 configurations in Figure~\ref{fig:def_Moduli}~(middle and right) are interior
points of $\M$. (This is a very hands-on version of the general notion of a
consistent choice of perturbation data in the setup of by
Seidel~\cite{SeiBook08}, see also~\cite{She11}, except that we are not using a
Hamiltonian term in our equation.) We keep all disks appearing in $\M$ to be $J$-holomorphic, without any perturbation.

It is well known that the solutions to (\ref{eq:perturbed}) are regular for a Baire set of $J_{A_R}|_{D_R}$, or
equivalently for a Baire set of $J_D$'s \cite[Lemma~4.1, Corollary~4.4]{CM07}. The fact that a perturbation in a neighbourhood of a point is sufficient follows from the unique continuation principle for Cauchy-Riemann type equations and is contained in the statement of \cite[Lemma~4.1]{CM07}.

In particular, there is a sequence $J_{A_R,n}$ converging
to the constant $J$:
$$
J_{A_R,n}|_{D_R}\to J \quad \text{ in } C^{\infty}(D_R,\J)
$$
such that the annuli solving (\ref{eq:perturbed}) with respect to $J_{A_R,n}$ are regular for each $n$. 

The rest of the proof of Theorem~\ref{th:CO} requires one more minor modification.  Looking
at Figure~\ref{fig:config_R_infty}~(left), one of the holomorphic disks now
carries a domain-dependent perturbation supported in the subdomain $D$ inherited
from the annulus, compare Figure~\ref{fig:perturb_annuli}~(right). (Which of the
two disks carries this perturbation depends on which side of the core circle the
$D$ was in the annulus.) We note that the disks in
Figure~\ref{fig:config_R_infty}~(middle and right) do not carry a perturbation,
as they arise as side bubbles from the annuli. We claim that for large enough
$n$, the count of the configurations in Figure~\ref{fig:config_R_infty}~(left)
where one of the disks carries an above perturbation, equals to the same count
where the disks are purely $J$-holomorphic. Indeed, it follows from the fact
that $J_{D,n}\to J$, using continuity and the regularity of the unperturbed
$J$-holomorphic disks. This allows us to use Lemma~\ref{lem:configs_CO} and
conclude the proof.  

\subsection{Proof of Theorem~\ref{th:CO_groups}}
This is a simple modification of the proof of Theorem~\ref{th:CO}, so we shall be brief. The idea is to redefine the moduli space $\M$ by considering only those configurations in Figure~\ref{fig:def_Moduli} whose total boundary homology classes in $H_1(L;\k)$ resp.~$H_1(K;\k)$ belong to the affine subspace $S_L$ resp.~$S_K$. 

The only difference in the proof arises when we argue that configurations of type~(i) cancel, see Figure~\ref{Fig:M_Morse_break}. At that point of the above proof, we used Condition~(\ref{eq:disk_low_area_cancel_bdies}); now we need to use Condition~(\ref{eq:disk_groups_cancel_bdies}) instead. 
Let us consider configurations as in the left part of Figure~\ref{Fig:M_Morse_break}. Assume that the area $b$ annulus in the figure has boundary homology class $l\in H_1(L;\k)$ on $L$. Then the area $a$ disk of the same configuration has boundary class belonging to the affine subspace $S_L-l\subset H_1(L;\k)$; this is true because the total boundary homology class has to lie in $S_L$. By a Morse-theoretic version of Condition~(\ref{eq:disk_groups_cancel_bdies}), the count of such area $a$ disks with the attached flowlines (asymptotic to index 1 critical points) vanishes. The rest of the proof goes without change.\qed

\subsection{Adjusting the area conditions}\label{subsec:wall_crossing}
We would like to point out that  the area restrictions in Theorems~\ref{th:CO} and
~\ref{th:CO_groups} can be weakened at the expense of requiring one of the two
Lagrangians be monotone. In the setup of Section~\ref{subsec: DefOC_low}, suppose that $K$ is monotone, so that $B = +\infty$ and $b$ is the
monotonicity constant of $X$, i.e. $\omega(\beta) = \frac{b}{2} \mu(\beta)$ for
$\beta \in H_2(X,K)$. 
Below is a modified version of Theorem~\ref{th:CO}, which differs by the fact that the numbers $a,A$ become redefined compared to those from Section~\ref{subsec: DefOC_low}. We are still using a coefficient ring $Q$ for homology.

\begin{theorem}
	\label{thm:alg_cancel}
Suppose $K,L\subset X$ are orientable Lagrangian surfaces, and $K$ is monotone.
Fix any tame almost complex structure $J$, and let $\M_C(pt)$ be the moduli space of holomorphic disks in the homology class $C$ through a fixed point in $L$. Define $A$ to be
\begin{equation}
A=\min\left\{\omega_0:
\sum_{
	\begin{smallmatrix}
	{C\in H_2(X,L):}
	\\ \omega(C)=\omega_0,\
	\mu(C)=2
	\end{smallmatrix}
	}\quad \sum_{u\in \M_C(pt)}[\bd u] \neq 0\in H_1(L)\right\}
\end{equation}
(This minimum exists by Gromov compactness.)
Let $a$ be any number less than $A$, and assume in addition that that all holomorphic disks of area less than $a+b$ with boundary on $L$ are regular with respect to the chosen $J$. Define $\OC\clow([p_L])$ as in Section~\ref{subsec: DefOC_low} using holomorphic disks of area $a$.
If $a+b<A$, $[L]=[K]=0$, and
$$
\OC\clow([p_L])\cdot \OC\c([p_K])\neq 0,
$$
then $L$ and $K$ are Hamiltonian non-displaceable from each other.
\end{theorem}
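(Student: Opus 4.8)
The plan is to follow the proof of Theorem~\ref{th:CO} essentially verbatim, tracking where the global area hypothesis $a+b<\min(A,B)$ was used and checking that the weaker hypothesis suffices once $K$ is monotone. As before, argue by contradiction: assume a Hamiltonian $\phi$ with $\phi(L)\cap K=\emptyset$, rename $\phi(L)$ as $L$, pick perfect Morse functions $f_1,f_2$ on $L,K$, and form the moduli space $\M$ of the three types of pearly configurations in Figure~\ref{fig:def_Moduli} (annulus of area $a+b$ glued to a Maslov~2 disk of area $a$ on $L$, a Maslov~2 disk of area $b$ on $K$, constraints $p_L,p_K$), imposing the non-vanishing-of-total-boundary condition. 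Transversality for the annuli is handled exactly as in Subsection~\ref{subsec:trans} by a domain-dependent perturbation supported away from the degeneration loci; note the new hypothesis that \emph{all} holomorphic disks of area $<a+b$ on $L$ are $J$-regular is precisely what makes the unperturbed-disk parts of the compactification transverse, so no further perturbation of disks is needed.

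The new definition of $A$ only changes the bubbling analysis in one place. In the proof of Theorem~\ref{th:CO} one needed two facts about $L$: (1) Maslov index~2 area-$a$ disks cannot bubble off the annulus into lower-area Maslov~2 pieces, which is automatic since $a$ was the minimal such area; and (2) the type~(i) configurations of Figure~\ref{Fig:M_Morse_break} cancel, which used Condition~(\ref{eq:disk_low_area_cancel_bdies}). Now $a$ need not be minimal, so I must re-examine (1): the annulus has area $a+b$, so if a Maslov~2 disk of positive area $a'\le a$ bubbles off $L$, the remaining broken configuration carries total disk-area $a+b-a'$, and combined with reattachment one is led to configurations governed by area-$\le a$ disk counts on $L$. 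The point is that by the definition of $A$, for every value $\omega_0< A$ the \emph{signed sum} of boundary classes $\sum_{\mu(C)=2,\ \omega(C)=\omega_0}\sum_{u\in\M_C(pt)}[\bd u]$ vanishes in $H_1(L;Q)$; so although individual area-$a'$ disks exist, their contribution to the boundary count of $\M$ cancels in groups, exactly as the type~(i) cancellation did in the original proof. Spelling out this Morse-theoretic cancellation for each area value $a'<A$ appearing, and assembling it into the statement that the count of type~(i) boundary points of $\M$ is zero, is the substantive step.

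The remaining boundary strata are unchanged: $R=0$ configurations force an intersection point of $K\cap L$ and hence do not exist; $R=+\infty$ configurations are the three pictures of Figure~\ref{fig:config_R_infty}, whose signed count is $\OC\clow([p_L])\cdot\OC\c([p_K])$ by (the proof of) Lemma~\ref{lem:configs_CO}, where $\OC\clow([p_L])$ is now built from the area-$a$ disks; the unwanted Maslov~$0$--Maslov~$4$ breaking of Figure~\ref{fig:config_R_infty_unwanted} is excluded by the non-zero-boundary condition; disks of Maslov index $\le 0$ or $\ge 4$, side bubbles of Maslov~2 disks, and sphere bubbles are all excluded for index reasons exactly as before, using that $K$ is monotone (so $b$ is the monotonicity constant) and $a+b<A$ to rule out higher-area Maslov~2 bubbling on $L$. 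Compactness of the completed $\M$ then gives that the total signed boundary count is $0$, hence $\OC\clow([p_L])\cdot\OC\c([p_K])=0$, contradicting the hypothesis. The main obstacle I expect is the bookkeeping in the type~(i) cancellation: one has to be careful that the perturbation used for the annuli does not spoil the grouping of disk-boundary counts by area, and that the Morse-theoretic argument still produces a null-homologous cocycle for each area value $<A$ separately rather than only in aggregate.
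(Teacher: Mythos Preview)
Your overall strategy is right, but there are two concrete gaps that the paper's proof closes differently.

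First, you displace $L$ and rename $\phi(L)$ as $L$. The trouble is that the hypothesis fixes a specific $J$ for which all area-$<a+b$ disks on $L$ are regular, and $A$ is \emph{defined} using this $J$; neither the regularity nor the value of $A$ is guaranteed to survive replacing $L$ by $\phi(L)$ while keeping $J$ (and a nearby $\phi_*J$ may well change the disk counts on $L$ via Maslov-$0$ wall-crossing). The paper's first move is precisely to avoid this: since $K$ is monotone, $\OC\c([p_K])$ is invariant under Hamiltonian isotopies of $K$, so one displaces $K$ instead and keeps $(L,J)$ exactly as in the hypothesis. The domain-dependent perturbation for the annuli is then chosen to converge to this fixed $J$.

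Second, your moduli space $\M$ only contains area-$a$ disks on $L$. Then from the area-$(a+b)$ annulus-alone configuration, a Maslov-$2$ disk of area $a'\neq a$ can bubble at the $p_L$-marked point, producing a genuine boundary point of your $\M$ (it is the $l=0$ end of a disk-plus-annulus component you did not include). Your appeal to ``reattachment'' and to the type~(i) mechanism is the right instinct but is not the same phenomenon: this is bubbling, not Morse breaking. The paper's fix is to \emph{enlarge} $\M$ from the outset so that the $L$-disk in Figure~\ref{fig:def_Moduli} may have any area less than $a+b$; then those bubbled configurations become interior points. The type~(i) boundaries of the enlarged $\M$ cancel area-by-area because, by the new definition of $A$ and $a+b<A$, the boundary sums vanish for every area below $a+b$. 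Finally, one must check---and the paper explicitly notes---that no new type~(iii) configurations appear: in the enlarged right-hand component the annulus has area $a+b-a'$, and when $R\to\infty$ the resulting Maslov-$2$ disk on $K$ has area $b$ by monotonicity, forcing $a'=a$ (the alternative splitting with a constant disk on $K$ is excluded by the nonzero-boundary condition).
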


The meaning of this modification is as follows. In Theorem~\ref{th:CO}, $a$ and $A$ were the first two positive areas of classes in $H_2(X,L)$; moreover the boundares of area-$a$ disks had to cancel, in order for the string invariant to be defined. Here the boundaries of area-$a$ disks still cancel by the new definition of $A$ and because $a<A$.

Now we come to the difference: in the setup of Theorem~\ref{th:CO}, there existed no (topological) disks with areas between $a$ and $a+b$, while in the setup of Theorem~\ref{thm:alg_cancel}, such disks could exist (and be holomorphic of Maslov index 2), but their boundaries will still cancel by the assumption $a+b<A$. It turns out that this is sufficient to run the argument.

We can similarly modify Theorem~\ref{th:CO_groups}. 
\begin{theorem}
\label{thm:alg_cancel_groups}
In the setup of the previous theorem,
additionally choose $S_L$ as in Section~\ref{sec:intro}. The statement of
Theorem~\ref{thm:alg_cancel} holds if we replace
$A$ 
by
\begin{equation}
 A=\min\left\{\omega_0:
 \sum_{
 	\begin{smallmatrix}
 	{C\in H_2(X,L):}\\
 	\omega(C)=\omega_0,\ \mu(C)=2,\\
 	\bd C\in S_L+l
 	\end{smallmatrix}
 }\quad \sum_{u\in \M_C(pt)}[\bd u] \neq 0\in H_1(L) \text{ for some }l\in H_1(L)\right\}
\end{equation}
and $\OC\clow([p_L])$ by $\OC\clow([p_L],S_L)$.
\end{theorem}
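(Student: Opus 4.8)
The plan is to follow the proof of Theorem~\ref{th:CO} almost verbatim, tracking only the two places where the hypotheses differ, namely the compactness of the moduli space $\M$ and the cancellation of type~(i) configurations. First I would set up $\M$ exactly as in Subsection~\ref{subsec:Proof_ThCO}: assuming $\phi(L)\cap K=\emptyset$ and redenoting $\phi(L)$ by $L$, pick perfect Morse functions $f_1,f_2$ on $L,K$ and a generic domain-dependent perturbation of the constant $J$ on the annuli as in Subsection~\ref{subsec:trans}, so that all annuli in $\M$ are regular; here $J$ is the given tame almost complex structure, with the additional hypothesis that all holomorphic disks on $L$ of area $<a+b$ are regular, which is precisely what we need for the disk components of $\M$ to be transverse. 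The annulus has fixed conformal parameter $R\in(0,+\infty)$, area $a+b$, and the usual boundary/interior marked point constraints evaluating to $p_L,p_K$, with the non-vanishing total boundary condition in $H_1(L;\Z)$ and $H_1(K;\Z)$. The index count giving $\dim\M=1$ is unchanged, since it only used $\mu=2$ and the Fredholm index of a free-conformal-parameter annulus equalling its Maslov index.

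Next I would analyze the compactification. The $R=0$ limit is excluded by $L\cap K=\emptyset$, and the $R=+\infty$ limit again breaks the annulus into a disk on $L$ and a disk on $K$, one possibly constant, exactly as in Figure~\ref{fig:config_R_infty}; the Maslov-index-$4$-plus-constant breaking is still killed by the non-vanishing boundary condition, and Lemma~\ref{lem:configs_CO} applies to identify the signed count of these boundary points with $\OC\clow([p_L])\cdot\OC\c([p_K])$ (using $[L]=[K]=0$ so the pairing is defined, and $C^L\cdot C^K=0$ since $L\cap K=\emptyset$). The genuinely different point is \emph{why no higher-area disk bubbles off the annulus}. In Theorem~\ref{th:CO} this used the literal nonexistence of $\mu=2$ disks of area in $(a,a+b)$; here such disks can exist. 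Instead I would argue as follows: if a Maslov index $2$ disk $u_0$ of area $\omega_0$ bubbles off, the remaining configuration is an annulus of area $a+b-\omega_0<a+b\le A$ carrying a $\mu=2$ disk on $L$ of area $\le a$ (or a $\mu=2$ disk on $K$), and then running the \emph{same} degeneration-and-cancellation bookkeeping as for type~(i): the contribution of a bubbled disk of fixed area $\omega_0<a+b\le A$ and fixed boundary class, summed over all disks in that homology class through the fixed point, is $\sum_{u\in\M_C(pt)}[\bd u]$, and by the definition of $A$ together with $\omega_0<A$ this sum vanishes in $H_1(L)$. Combined with the oriented-surface self-intersection argument (the complementary boundary class of the annulus pairs to zero with the boundary class of the bubbled disk, so zero-total-boundary strata also contribute zero), the net count of all such bubbled strata is zero, so they may be harmlessly included in $\partial\M$ without affecting the count. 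Maslov index $\le 0$ disks still do not exist generically (simple ones by the index formula in $\dim X=4$, non-simple ones via the decomposition theorems \cite{KO00,La11}), Maslov index $\ge 4$ disk bubbles are excluded because the residual annulus would have $\mu\le 0$ and pass through a fixed point, which is non-generic, side bubbles of $\mu=2$ disks not carrying a constraint are excluded because the residual $\mu=2$ annulus would carry both $p_L$ and $p_K$, and sphere bubbles are codimension $2$ in the monotone ambient manifold. So $\M$ is compact after adding type (i) (now enlarged by the extra $\mu=2$ bubbled strata just discussed) and type (iii) configurations.

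The type~(i) cancellation is handled exactly as before, using the hypothesis that the \emph{boundaries of area-$a$ disks cancel}; this follows from $a<A$ and the new definition of $A$, since any strictly smaller area $\omega_0\le a<A$ forces $\sum_C\sum_{u\in\M_C(pt)}[\bd u]=0$ in $H_1(L)$, which is precisely the Morse-theoretic input needed for the disk-plus-flowline subconfigurations to cancel; the oriented-surface argument then kills the zero-total-boundary strata as in the original proof. Finally, the count of $\partial\M$ being zero by compactness yields $\OC\clow([p_L])\cdot\OC\c([p_K])=0$, contradicting the hypothesis, so $L$ and $K$ are non-displaceable. For Theorem~\ref{thm:alg_cancel_groups} I would repeat this verbatim with $\M$ restricted to configurations whose total boundary classes lie in $S_L$ resp.~$S_K$, replacing every use of ``boundaries cancel'' by its affine-subspace-graded version \eqref{eq:disk_groups_cancel_bdies}, exactly the modification already carried out in the proof of Theorem~\ref{th:CO_groups}; the new definition of $A$ with the $\bd C\in S_L+l$ constraint is what makes both the area-$a$ cancellation and the bubbled-$\mu=2$-stratum cancellation go through in each affine-subspace grade separately. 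The main obstacle, and the only real content beyond Theorem~\ref{th:CO}, is verifying that the potentially-present $\mu=2$ disk bubbles of area in $(0,a+b)$ genuinely cancel in the compactification rather than contributing spurious boundary points to $\M$; this is where the regularity hypothesis on all disks of area $<a+b$ and the averaged vanishing built into the definition of $A$ are both essential.
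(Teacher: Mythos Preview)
Your overall strategy is sound and matches the paper's, but there is a genuine gap in how you handle the $\mu=2$ disk bubbles of area $\omega_0\neq a$ on $L$. When such a disk bubbles from the $\mu=4$ annulus, the remaining configuration is simply a $\mu=2$ annulus of area $a+b-\omega_0$ (not ``an annulus carrying a $\mu=2$ disk of area $\le a$''), attached to the bubbled disk at a nodal point. This is \emph{not} a broken-flowline configuration, so the type~(i) Morse-theoretic cancellation does not apply directly. Your claimed mechanism --- that the contribution ``is $\sum_{u\in\M_C(pt)}[\partial u]$, which vanishes'' --- confuses a signed count of points with a homology class; the actual count is an intersection of the disk-boundary $1$-cycle with a $1$-\emph{chain} swept by the $L$-boundary marked point of the $\mu=2$ annuli, and since the annulus moduli is non-compact (the conformal parameter still degenerates), that chain is not a cycle and you cannot invoke the null-homology of $\sum[\partial u]$ directly.

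The paper resolves this by \emph{enlarging} $\M$: the middle and right configurations of Figure~\ref{fig:def_Moduli} are allowed to have the $L$-disk of any area less than $a+b$, not just $a$. Then the nodal configuration you describe is the $l=0$ slice of this enlarged middle configuration and is therefore an \emph{interior} point of $\M$; the genuine boundary lies at $l=\infty$, i.e.\ broken flowlines (type~(i)) with an $\omega_0$-area disk, and these cancel by the standard Morse argument because $\omega_0<a+b\le A$ forces the boundary sum to vanish in each (affine-subspace graded, for Theorem~\ref{thm:alg_cancel_groups}) area level. Since $K$ is monotone, the $R=+\infty$ breakings of the $\mu=2$ annuli in the enlarged middle configurations force $\omega_0=a$, so no new type~(iii) configurations appear. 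Once you enlarge $\M$ in this way, the rest of your write-up (transversality via domain-dependent perturbations converging to the fixed $J$, the $S_L$-restriction for Theorem~\ref{thm:alg_cancel_groups}, exclusion of $\mu\le 0$ and $\mu\ge 4$ bubbles, etc.) is correct and matches the paper. Minor slip: the annuli have \emph{free}, not fixed, conformal parameter.
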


\begin{proof}[Proof of Theorems~\ref{thm:alg_cancel},~\ref{thm:alg_cancel_groups}]
Given that $K$ is monotone,  $\OC\c([p_K])$ is obviously
invariant under its Hamiltonian isotopies. After this, the  proofs of Theorems~\ref{th:CO} and~\ref{th:CO_groups} can be repeated using the fixed $J$ appearing in the hypothesis, with one obvious adjustment: the configurations in Figures~\ref{fig:def_Moduli} and~\ref{Fig:M_Morse_break} must allow disks of any area less than $a+b$ with boundary on $L$. The configurations in Figure~\ref{Fig:M_Morse_break} still cancel by hypothesis. Note that no new configurations of type~(iii) (see Figure~\ref{fig:config_R_infty}) need to be included.

In order to achieve the transversality of annuli by the method of Subsection~\ref{subsec:trans}, we choose domain-dependent perturbations $J_{A_R,n}$ that converge, as $n\to+\infty$, to the fixed $J$ appearing in the hypothesis.
\end{proof}

\subsection{Proof of Lemma~\ref{lem:disks_in_nbhood}}

We start with Part~(i) which is purely topological.
Assuming that $H_1(L;\Z)\to H_1(T^*M;\Z)$ is $N$-torsion, for any class in $D\in H_2(X,T_a;\Z)$ its $N$-multiple can be written in the following general form:
$$
ND=i_*(D')+D'',\quad D'\in H_2(T^*M,L_a;\Z),\quad D''\in H_2(X;\Z).
$$
Recall that $\omega=c_1/k\in H^2(X;\R)$ is integral. Assuming $\mu(D)=2$, we compute:
$$
\begin{array}{lll}
\mu(ND)&=&\mu(D')+2c_1(D'')=2N,\\
\omega(ND)&=&a\cdot \mu(D')/2+c_1(D'')/k\\
&=&a(N-c_1(D''))+c_1(D'')/k\in \{aN+(1-ka)\Z\}.
\end{array}
$$
Above, we have used the fact that the $L_a$ are monotone in $T^*M$, and  that $c_1(D'')$ is divisible by $k$.
Therefore,
$$
\omega(D)\in \{a+\textstyle\frac 1 N (1-ka)\Z\}
$$
When $a<1/(k+N)$, the least positive number in the set $\{a+\textstyle\frac 1 N
(1-ka)\Z\}$ is $a$, and the next one is $A=a+\frac 1 N (1-ka)$. This proves
Lemma~\ref{lem:disks_in_nbhood}(i). Notice that area $a$ is achieved if and only
if $c_1(D'')=\omega(D'')=0$. 

To prove Lemma~\ref{lem:disks_in_nbhood}(ii), first notice that holomorphic
disks with boundary on $L_a\subset T^*M$ must be contained in $U\subset T^*M$ by
the maximum principle, for any almost complex structure cylindrical near $\bd
U$. Therefore to prove the desired 1-1 correspondence between the holomorphic
disks, it suffices to prove that for some almost complex $J$ on $X$, the
area-$a$ Maslov index 2 holomorphic disks with boundary on $T_a$ are contained
in $i(U)$. We claim that this is true for a $J$ which is sufficiently stretched
around $\bd i(U)$, in the sense of SFT neck-stretching.

\begin{figure}
	\includegraphics[]{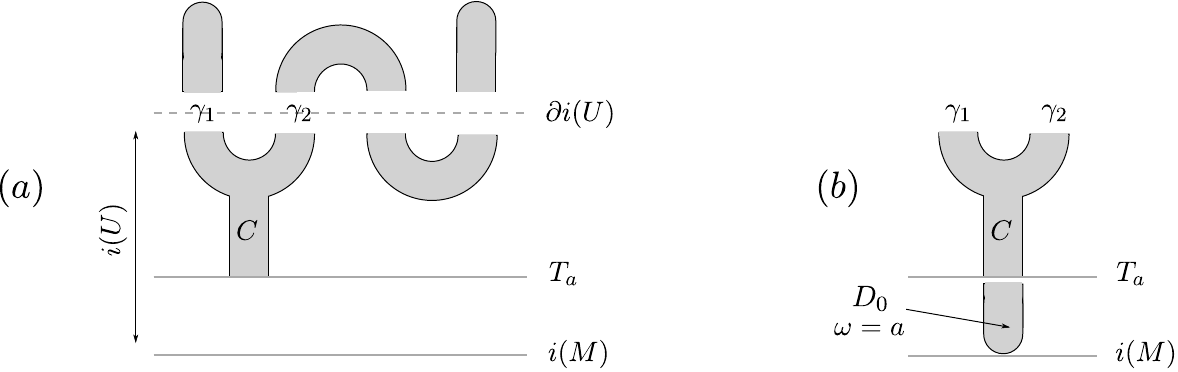}
	\caption{(a): holomorphic building which is the limit of a holomorphic disk, and its part $C$; (b): the area computation for $C$.}
	\label{fig:neck_stretch}
\end{figure}
Pick the standard Liouville 1-form $
\theta$ on $i(U)$, and stretch $J$ using a cylindrical almost complex structure with respect to $\theta$ near $\bd U$.
 The SFT compactness theorem \cite{CompSFT03} implies that disks not contained
 in $i(U)$ converge in the neck-stretching limit to a holomorphic building, like
 the one shown in Figure~\ref{fig:neck_stretch}(a). One part of the building is
 a curve with boundary on $T_a$ and several punctures. Denote this curve by $C$.
 It is contained in $i(U)$, and its punctures are asymptotic to Reeb orbits in
 $\bd i(U)$ which we denote by $\{\gamma_j\}$.
 	
Recall that above we have shown  that the homology class of the original disk $D$ had the form $$D=i_*(D')/N+D''/N\in H_2(X,T_a;\Q),$$ where $D''$ is a closed 2-cycle and $\omega(D'')=0$.
	Denote 
	$$ D_0=i_*(D')/N\in H_2(X,T_a;\Q).$$ 
	Then $\omega( D_0)=a$ and $ D_0$ can be realised as a chain sitting inside $i(U)$, whose boundary in $T_a$ matches the one of $C$ (or equivalently, $D$). 
	Consider the chain $C\cup (-D_0)$, where $(-D_0)$ is the chain $D_0$ taken with the opposite orientation, see Figure~\ref{fig:neck_stretch}(b). Then: 
	
 $$\bd \left(C\cup (-D_0)\right)=\cup_j\gamma_j.$$ 
 Below, the second equality
 follows from the Stokes formula using $\omega = d\theta$, which can be applied because the whole chain is contained in
 $i(U)$: 
 $$\omega(C)-a=\omega(C\cup (-D_0))=\textstyle \sum_j A(\gamma_j),$$
 where 
 $$ A(\gamma_j) = \textstyle \int_{\gamma_j} \theta > 0,$$ 
 since $\theta( \textit{Reeb vector field}) = 1$.  
 
 On the other hand, recall that $\omega(C)<a$ because $C$ is part of a
 holomorphic building with total area $a$. This gives a contradiction. We
 conclude that all area-$a$ Maslov index 2 holomorphic disks are contained in
 $i(U)$ for a sufficiently neck-stretched $J$.\qed

\section{The tori $T_a$ are non-displaceable from the Clifford torus} \label{sec:T_a}

In this section we recall the definition of the tori $\T_a\subset
\CP^1\times\CP^1$ which were studied by Fukaya, Oh, Ohta and Ono \cite{FO312},
and the tori $T_a\subset \CP^2$ appearing in the introduction. We prove
Theorem~\ref{th:T_a} along with a similar result for the $\T_a \subset \CP^1
\times \CP^1$, and for an analogous family of tori in the 3-point blowup of $\CP^2$.
We also prove Lemma~\ref{lem:cotangent_bundles}, and check that Floer cohomology
with bulk deformations vanishes for the $T_a$. 

\subsection{Definition of the tori.} \label{sec: dfn Tori}

We choose to define the tori $T_a$ as in \cite{Wu15}, using the {\it coupled
spin} system \cite[Example 6.2.4]{PR14} on $\CP^1\times\CP^1$. Consider
$\CP^1\times \CP^1$ as the double pendulum composed of two unit length rods: the
endpoint of the first rod is attached to the origin $0\in\R^3$ around which the
rod can freely rotate; the second rod is attached to the other endpoint of the
first rod and can also freely rotate around it, see Figure~\ref{fig:pend}. 

\begin{figure}[h]
\includegraphics{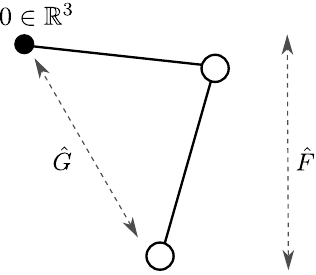}
\caption{The double pendulum defines two functions $\hF,\hG$ on $\CP^1\times \CP^1$.
}
\label{fig:pend}
\end{figure}

\noindent
Define two functions $\hF,\hG\co \CP^1\times\CP^1\to \R$ to be, respectively, the
$z$-coordinate of the free endpoint of the second rod, and its distance from the origin, normalised by $1/2$. In formulas,
$$
\begin{array}{l}
\CP^1 \times \CP^1=\{ x_1^2 + y_1^2 + z_1^2 = 1 \} \times \{ x_2^2 + y_2^2 + z_2^2 = 1 \} 
\subset \R^6,
\\
\hF(x_1,y_1,z_1,x_2,y_2,z_2) = \frac 1 2 (z_1 + z_2),
\\
\hG(x_1,y_1,z_1,x_2,y_2,z_2) = \frac 1 2 \sqrt{(x_1 + x_2)^2 + (y_1 + y_2)^2 +(z_1 + 
z_2)^2}.
\end{array}
$$
The function $\hG$ is not smooth along the anti-diagonal Lagrangian sphere
$\Sa = \{(x_1,y_1,z_1,x_2,y_2,z_2) \in \CP^1\times \CP^1; x_2 = -x_1, y_2 = -y_1, z_2= -z_1\}$
(corresponding to the folded pendulum), and away from it the 
functions $\hF$ and $\hG$ Poisson commute. The image of the ``moment map''
$(\hF,\hG)\co \CP^1\times\CP^1\to \R^2$ is the triangle shown in Figure~\ref{fig:polyt}.

\begin{figure}[h]
\includegraphics{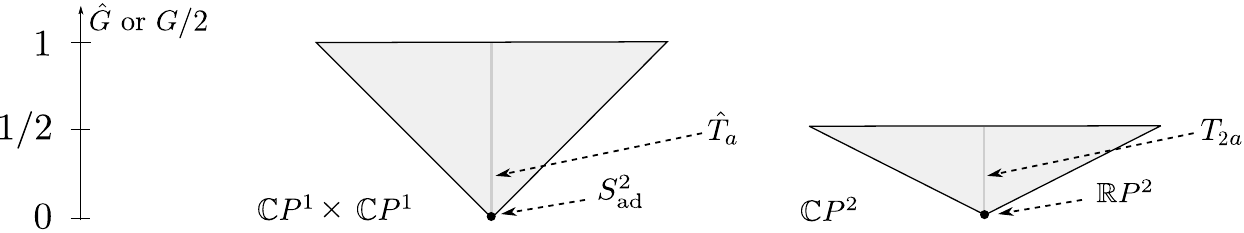}
\caption{The images of the ``moment maps'' on $\CP^1\times\CP^1$ and $\CP^2$, and the lines above which the tori $\T_a, T_a$ are located.
}
\label{fig:polyt}
\end{figure}

\begin{definition}
For $a\in(0,1)$, the Lagrangian torus $\T_a\subset \CP^1\times \CP^1$ is the pre-image of $(0,a)$
under the map $(\hF,\hG)$.
\end{definition}

The functions $(\hF,\hG)$ are invariant under the $\Z/2$-action on $\PxP$ that
swaps the two $\CP^1$ factors. This involution defines a 2:1 cover $\CP^1\times
\CP^1\to \CP^2$ branched along the diagonal of $\CP^1\times\CP^1$, so the
functions $(\hF,\hG)$ descend to functions on $\CP^2$ which we denote by $(F,G)$;
the image of $(F, G/2)\co \CP^2\to \R^2$ is shown in Figure~\ref{fig:polyt}.
Note that the quotient of the Lagrangian sphere $\Sa$ is $\RP^2 \subset \CP^2$.
Being branched, the 2:1 cover cannot be made symplectic, so it requires some
care to explain with respect to which symplectic form the tori $T_a\subset\CP^2$ are Lagrangian. One solution is to
consider $\CP^2$ as the symplectic cut \cite{Le95} of $T^*\RP^2$, as explained by
Wu~\cite{Wu15}. It is natural to take $(F,G/2)$, not $(F,G)$, as the ``moment
map'' on $\CP^2$.

We normalise the symplectic forms $\omega$ on $\CP^2$ and
 $\homega$ in $\PxP$ so that
 $\omega(\cH) = 1$ and $\homega(\cH_1) = \homega(\cH_2) = 1$, where $\cH =
 [\CP^1]$ is the generator of $H_2(\CP^2)$, and $\cH_1 = [\{\pt\}\times \CP^1]$,
 $\cH_2 = [\CP^1 \times \{\pt\}]$ in $H_2(\PxP)$. 
 
 
 

\begin{definition} 
For $a \in (0,1)$, the Lagrangian torus $T_a \subset \CP^2$ is the pre-image of
$(0, a/2)$ under $(F,G/2)$,~i.e.~the image of $\T_a$ under the 2:1 branched cover
$\PxP \to \CP^2$. 
\end{definition}

\begin{remark} \label{rem:def_Ta_Auroux} There is an alternative way to define
the tori $\T_a$ and $T_a$. It follows from the work of Gadbled \cite{Gad13}, see 
also \cite{OU13}, that the above defined tori are Hamiltonian isotopic to the
so-called Chekanov-type tori introduced by Auroux \cite{Au07}: 
$$
\begin{array}{cc} \T_a \cong \{ ([x:w],[y:z])\in \PxP \setminus
\{z=0\}\cup\{w=0\}: \ \frac{xy}{wz} \in \hgamma_a, \ \left|\frac{x}{w} \right| =
\left|\frac{y}{z} \right| \}, \\ T_a \cong \{ [x:y:z]\in \CP^2 \setminus
\{z=0\}: \ \frac{xy}{z^2} \in \gamma_a, \ \left|\frac{x}{z} \right| =
\left|\frac{y}{z} \right| \}, \end{array} 
$$ 
where $\hgamma_a,\gamma_a\subset
\C$ are closed curves that enclose a domain not containing $0\in\C$. The area of
this domain is determined by $a$ and must be such that the areas of holomorphic
disks computed in \cite{Au07} match Table~\ref{tab: Disks}; see below. (Curves
that enclose domains of the same area not containing $0\in\C$ give rise to
Hamiltonian isotopic tori.) The advantage of this presentation is that the tori
$T_a$ are immediately seen to be Lagrangian. The tori $\T_{1/2}$ and $T_{1/3}$ 
the monotone manifestations in $\PxP$ and $\CP^2$ of the Chekanov torus \cite{Che96}.
A presentation of the monotone Chekanov tori similar to the above was described 
in \cite{ElPo93}. 
Yet another way of defining the
tori is by Biran's circle bundle construction~\cite{Bi01} over a monotone circle
in the symplectic sphere which is the preimage of the top side of the triangles
in Figure~\ref{fig:polyt}; see again \cite{OU13}. \end{remark}

\subsection{Holomorphic disks} We start by recalling the theorem of Fukaya, Oh,
Ohta and~Ono mentioned in the introduction.

\begin{theorem}[{\cite[Theorem 3.3]{FO312}}]
For $a \in (0, 1/2]$, the torus $\T_a\subset \CP^1\times \CP^1$ is non-displaceable.\qed
\end{theorem}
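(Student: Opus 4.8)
This result is due to Fukaya, Oh, Ohta and Ono and we only sketch the strategy, since we quote it as a black box. The tool is Floer cohomology with bulk deformations: one shows that for a suitable bulk class $\mathfrak b\in H^2(\CP^1\times\CP^1;\Lambda_0)$ (or rather in the even cohomology of the ambient space with Novikov coefficients), the bulk-deformed Floer cohomology $HF((\T_a,\mathfrak b,\nabla),(\T_a,\mathfrak b,\nabla))$ is non-zero for an appropriate choice of flat line bundle $\nabla$ on $\T_a$. The first step is therefore to make the disk count explicit: identify $\T_a$ with the Chekanov-type torus of Remark~\ref{rem:def_Ta_Auroux}, and record the Maslov index~2 holomorphic disks together with their areas and boundary classes --- this is the content of Auroux's computation summarised in Table~\ref{tab: Disks}. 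For $a\le 1/2$ the least-area disks have area $a$ and their boundaries cancel, while the remaining families have areas involving $1-a$ and $2a$; the key quantitative feature is that when $a\le 1/2$ these areas are arranged so that the potential function can be made to have a critical point after bulk deformation.

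The second step is to write down the bulk-deformed potential function $\PO^{\mathfrak b}_{\T_a}$, a Laurent-type series in the holonomy variables $(y_1,y_2)$ whose coefficients are monomials $T^{\text{area}}$ weighted by the disk counts and by exponentials of the bulk parameter, and to solve the critical point equation $\partial\PO^{\mathfrak b}/\partial y_i=0$. For the monotone value $a=1/2$ one recovers the classical Chekanov torus and a critical point exists over a field of the appropriate characteristic; for $a<1/2$ the bulk term is precisely what is needed to compensate the area discrepancy between the two families of disks and restore a critical point. Concretely one chooses $\mathfrak b$ proportional to the point class (or a toric divisor class) with a coefficient tuned to $a$, so that the valuations of the two relevant terms in $\partial\PO^{\mathfrak b}/\partial y_i$ coincide and their leading coefficients can be cancelled; the Novikov field then contains the required root.

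The third step is the standard passage from a critical point of the bulk-deformed potential to non-vanishing of bulk-deformed Floer cohomology, and from there to non-displaceability: a critical point gives a weak bounding cochain making $\delta^2=0$ and $HF\cong H^*(T^2;\Lambda)\neq 0$, and a displacing Hamiltonian isotopy would force this group to vanish. All of this is carried out in \cite{FO312}; for the purposes of the present paper we only need the \emph{statement}. The main obstacle in their argument --- and the reason the range is $a\in(0,1/2]$ rather than all of $(0,1)$ --- is controlling which disk classes contribute and verifying that the relevant moduli spaces are regular and that no extra bubbling spoils the count; for $a>1/2$ the area ordering of the disk families changes and the bulk-deformed potential no longer admits a critical point, consistent with the fact (used elsewhere in this paper) that $T_a\subset\CP^2$ becomes displaceable for $a>1/3$.
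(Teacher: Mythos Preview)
The paper does not prove this theorem at all: note the \verb|\qed| immediately after the statement --- it is quoted verbatim from \cite{FO312} and used as a black box. So there is no ``paper's own proof'' to compare against, and your decision to sketch the FOOO bulk-deformation argument rather than give a self-contained proof is appropriate.

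Your sketch of the strategy is broadly right (bulk-deform the potential, find a critical point, conclude $HF^{\mathfrak b}\neq 0$), but a couple of details are off. First, from Table~\ref{tab: Disks} the Maslov-2 disks on $\T_a$ have areas $a$ (four classes) and $1-a$ (one class); there is no family of area $2a$. Second, in \cite{FO312} the bulk class $\mathfrak b$ is taken in $H^2$ and is Poincar\'e dual to a divisor (compare the discussion around Proposition~\ref{prop: Bulk CP^2}), not ``proportional to the point class''; a point class lives in $H^4$ and would bring in higher-index disks. Third, your explanation of why the range stops at $a=1/2$ is misleading: it is not a transversality or bubbling obstruction in the FOOO argument, but simply that the bulk-deformed potential ceases to have a critical point --- and indeed Proposition~\ref{prp: Probes} shows the $\T_a$ are actually displaceable for $a>1/2$, so no argument could work there. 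These are minor points in what is anyway only a heuristic summary of a cited result.
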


\begin{proposition} \label{prp: Probes}
Inside $\CP^1\times\CP^1$ and $\CP^2$, 
all fibres corresponding to interior points of the ``moment polytopes'' shown in Figure~\ref{fig:polyt}, except for the tori $\T_a$ when $a\in(0,1/2]$, and $T_a$ when $a \in (0,1/3]$,
  are displaceable.  
\end{proposition}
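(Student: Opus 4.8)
The plan is to show displaceability of all the fibres of $(\hF,\hG)$ and $(F,G/2)$ other than the claimed family by exhibiting an explicit Hamiltonian isotopy, using the ``probe'' technique of McDuff (see \cite{MD11}) adapted to this semitoric setting. The key point is that away from the singular Lagrangian sphere $\Sa$ (resp. $\RP^2$), the functions $\hF$ and $\hG$ Poisson-commute, so we are genuinely in a toric-like situation over the complement of the vertical edge of the moment polytope, and the usual displaceability criteria for toric fibres apply verbatim on that open dense piece.

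First I would set up the picture: the image of $(\hF,\hG)$ is the triangle in Figure~\ref{fig:polyt}, with one edge being the $\hG=0$ locus (the sphere $\Sa$, which is the ``folded pendulum'' stratum) and the other two edges being genuine toric-type boundary faces along which one of the circle actions of the integrable system degenerates. For an interior point $(s,t)$ of the triangle, the fibre is a Lagrangian torus $T_{(s,t)}$, and I would displace it by a probe: pick a rational direction $v$ entering the polytope transversally through one of the two honest edges at a point whose edge is ``integral-affine primitive'' in direction $v$, and such that $(s,t)$ lies in the first half of the chord cut out by that direction. Flowing along the Hamiltonian generating the corresponding circle action (which is globally defined since it is one of the $S^1$-factors of the pendulum system away from $\Sa$, and extends over $\Sa$ because the relevant rotation is about the axis and fixes the diagonal/anti-diagonal appropriately) displaces $T_{(s,t)}$ from itself. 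One checks directly that every interior point except those on the segment $\{\hF=0\}$ (i.e.~the $\T_a$) is covered by such a probe, exactly as in McDuff's analysis of $\CP^1\times\CP^1$ and $\CP^2$; the vertical segment $\{\hF=0\}$ is precisely the locus not reachable by a probe because it is the ``center line'' of the symmetry swapping the two rods.

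For the tori $\T_a$ with $a>1/2$, I would note that these still lie on the segment $\{\hF=0\}$ but in its upper half, so a probe entering through the top edge (the edge containing the monotone vertex) does reach them: the top edge of the triangle corresponds to a symplectic $\CP^1$ (preimage of that edge) and a probe along the fibre direction of Biran's circle-bundle description (Remark~\ref{rem:def_Ta_Auroux}) displaces $\T_a$ once $a>1/2$, since then the fibre sits below the midpoint of the chord. The same computation, pushed down through the $2{:}1$ branched cover, handles $T_a$ for $a>1/3$: areas get halved by the cover normalization (the ``moment map'' on $\CP^2$ is $(F,G/2)$), so the threshold $1/2$ upstairs becomes $1/3$ downstairs. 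Concretely, I would verify the area bookkeeping against Table~\ref{tab: Disks} to locate exactly where the relevant chord midpoint sits.

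The main obstacle I anticipate is making the probe argument rigorous across the singular fibre $\Sa$ (resp.~$\RP^2$): the function $\hG$ is not smooth there, so $(\hF,\hG)$ is only a semitoric and not a toric moment map, and one must be careful that the Hamiltonian $S^1$-action used for each probe is genuinely smooth and well-defined on all of $\CP^1\times\CP^1$ (or $\CP^2$), not merely on the complement of $\Sa$. The cleanest route is to work on the side of the polytope \emph{away} from the $\hG=0$ edge: every probe I need enters through one of the two \emph{smooth} edges and the chord it traces stays in the region where the integrable system is honestly toric, so the degeneracy at $\Sa$ is never invoked. The one delicate case is displacing fibres that are ``close'' to $\Sa$; there I would instead use the alternative toric-type presentation of a neighbourhood of $\Sa$ (it is a neighbourhood of the zero-section in $T^*S^2$, resp.~$T^*\RP^2$, with its own Hamiltonian circle action), and displace within that neighbourhood. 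Assembling these local displacements into the clean statement ``everything but $\T_a$, $a\in(0,1/2]$, and $T_a$, $a\in(0,1/3]$, is displaceable'' is then a matter of checking the chords cover the claimed region, which is the routine computation I would not spell out in full.
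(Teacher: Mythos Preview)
Your overall strategy---work in the toric complement of the singular Lagrangian and apply McDuff's probes---matches the paper's, and your handling of the off-axis fibres and of $\T_a$ for $a>1/2$ is fine. But there is a genuine gap in your treatment of the $T_a\subset\CP^2$ for $1/3<a\le 1/2$.

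Your claim that ``areas get halved by the cover normalisation, so the threshold $1/2$ upstairs becomes $1/3$ downstairs'' is wrong on two counts. Numerically, halving $1/2$ gives $1/4$, not $1/3$; and conceptually, the vertical probe in the $\CP^2$ triangle (height $1/2$ in the coordinate $G/2$) reaches only fibres with $a/2>1/4$, i.e.\ $a>1/2$. The paper is explicit on this point: ``When $1/3<a<1/2$, the method of probes cannot displace $T_a$.'' The monotone value $a=1/3$ is not an artefact of any covering normalisation but reflects where the Chekanov torus sits in $\CP^2$. For the remaining range $1/3<a\le 1/2$ the paper invokes a separate argument, due to Dimitroglou~Rizell: in the presentation of Remark~\ref{rem:def_Ta_Auroux} one may, for $a>1/3$, choose the curve $\gamma_a$ inside the unit disk, so that $T_a$ projects into the open segment $S$ joining $(0,0)$ to $(1/3,1/3)$ in the \emph{standard} toric moment polytope of $\CP^2$; a Hamiltonian isotopy then carries the preimage of $S$ to that of the segment from $(0,1)$ to $(1/3,1/3)$, which is disjoint from $S$. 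Your proposal contains no substitute for this step, and the probe argument alone cannot supply it.
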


\begin{proof} First, note that our model is toric in the complement of the
Lagrangians $\Sa \subset \PxP$ resp.~$\R P^2 \subset \CP^2$, represented
by the bottom vertex of Figure \ref{fig:polyt}. In fact, $\PxP \setminus \Sa$ and $\CP^2 \setminus \R P^2$ can be identified with the following normal
bundles, respectively: $\mathcal{O}(2)$ over the diagonal in $\PxP$, giving the maximum level set of
$\hG$; and $\mathcal{O}(4)$ over the conic in $\CP^2$, giving the
maximum level set of $G/2$. Clearly, these spaces are toric.

 Recall the method of probes due to McDuff~\cite{MD11} which is
a mechanism for displacing certain toric fibres. Horizontal probes displace all
the fibres except the $\T_a$ or $T_a$, $a\in (0,1)$. Vertical probes over the
segment $\{0\}\times (0,1/2]$ displace the $T_a$ for $a > 1/2$, and probes over
the segment $\{0\}\times (0,1]$ to displace the $\T_a$ for $a > 1/2$. All the
displacements given by probes can be performed by a Hamiltonian compactly
supported in the complement of the Lagrangians $S^2 \subset \PxP$, respectively
$\R P^2 \subset \CP^2$. When $1/3 < a< 1/2$, the method of probes cannot
 displace $T_a$. 

The proof of this remaining
case is due to Georgios Dimitroglou Rizell (currently not in the literature),
who pointed out that for $a > 1/3$, the tori $T_a$, up to Hamiltonian isotopy,
can be seen to project onto the open segment $S$ connecting $(0,0)$ with
$(1/3,1/3)$ in the standard moment polytope of $\CP^2$ (using the description of
Remark \ref{rem:def_Ta_Auroux}, we may take $\gamma_a$ inside the disk of radius
1 for $a > 1/3$). But there is a Hamiltonian isotopy of $\CP^2$ that sends the
preimage of $S$ to the preimage of the open segment connecting $(0,1)$ with
$(1/3,1/3)$, and hence disjoint from $S$. \end{proof}


The Maslov index 2 holomorphic disks for the tori $\T_a$ and $T_a$, with respect
to some choice of an almost complex structure for which the disks are regular,
were computed, respectively, by Fukaya, Oh, Ohta and~Ono \cite{FO312} and Wu
\cite{Wu15}. Their results can also be recovered using the alternative
presentation of the tori from Remark~\ref{rem:def_Ta_Auroux}. Namely, Chekanov
and Schlenk \cite{ChSch10} determined Maslov index 2 holomorphic disks for the
monotone Chekanov tori $T_{1/3} \subset \CP^2$ and $T_{1/2} \subset \PxP$, and
the combinatorics of these disks stays the same for the Chekanov-type tori from
Remark~\ref{rem:def_Ta_Auroux} if one uses the standard complex structures on
$\CP^2$ and $\PxP$ \cite[Proposition~5.8, Corollary~5.13]{Au07}.
We summarise these  results in the  statement below.

\begin{table}[h]
	{\it
	\begin{tabular}{|c|c|c|c|}
		\hline 
		\multicolumn{4}{|c|}{$\vphantom{\T_a}T_a\subset \CP^2$}
		\\
		\hline 
		Disk class&  \# & Area & $\PO$ term
		\\
		\hline 
		$\cH-2\beta-\alpha$ & 1 & $a$ & $t^az^{-2}w^{-1}$
		\\
		$\cH-2\beta$ & 2 & $a$ & $t^az^{-2}$
		\\
		$\cH-2\beta+\alpha$ & 1 & $a$ & $t^az^{-2}w$
		\\
		$\beta$ & 1 & $(1-a)/2$ & $t^{(1-a)/2}z$
		\\
		\hline		
		\multicolumn{4}{c}{}
	\end{tabular}
~
	\begin{tabular}{|c|c|c|c|}
		\hline
		\multicolumn{4}{|c|}{$\T_a\subset \CP^1\times\CP^1$}
		\\
		\hline
		Disk class&  \# & Area & $\PO$ term\\
		\hline 
		$\cH_1-\beta-\alpha$ & 1 & $a$ & $t^az^{-1}w^{-1}$\\
		$\cH_1-\beta$ & 1 & $a$ & $t^az^{-1}$ \\
		$\cH_2-\beta$ & 1 & $a$ & $t^az^{-1}$\\
		$\cH_2-\beta+\alpha$ & 1 & $a$ & $t^az^{-1}w$\\
		$\beta$ & 1 & $1-a$ & $t^{1-a}z$\\
		\hline
	\end{tabular}
	}
	\smallskip
	\caption{The homology classes of all Maslov index two $J$-holomorphic disks on the tori; the
		number of such disks through a generic point on the torus; their areas; the corresponding monomials in the superpotential function: all for
		some regular almost complex structure $J$. Here
		$\alpha,\beta$ denote some fixed homology classes in $H_2(\CP^2,T_a)$ or 
		$H_2(\CP^1\times\CP^1,\T_a)$, and $\bd \alpha, \bd \beta$ generate $H_1(T_a, \Z)$
		or $H_1(\T_a, \Z)$.}
\label{tab: Disks}
\end{table}

\begin{proposition}[\cite{Au07, ChSch10, FO312, Wu15}] \label{Prop: Disks T_a} 
There exist almost complex structures on $\CP^2$ and $\PxP$ for which the
enumerative geometry of Maslov index 2  holomorphic disks with boundary on $T_a$,
resp.~$\T_a$, is as shown in Table~\ref{tab: Disks}, and these disks are regular.
Here we are considering the standard spin structure in the tori to orient
the moduli spaces of disks. \qed
\end{proposition}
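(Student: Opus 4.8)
\textbf{Proof proposal for Proposition~\ref{Prop: Disks T_a}.}

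The plan is to reduce the statement to the already-established computations of Auroux, Chekanov--Schlenk, Fukaya--Oh--Ohta--Ono and Wu, using the identifications recalled in Remark~\ref{rem:def_Ta_Auroux}. First I would fix the presentation of the tori $T_a$, $\T_a$ as Chekanov-type tori in the complements of a conic (resp.\ a diagonal divisor), equipped with the \emph{standard} integrable complex structures $J_{\std}$ on $\CP^2$ and $\PxP$. With this complex structure, the relevant moduli spaces of Maslov index~$2$ disks were explicitly described: for the monotone Chekanov tori $T_{1/3}\subset\CP^2$ and $T_{1/2}\subset\PxP$ this is Chekanov--Schlenk~\cite{ChSch10}, and for the one-parameter families of Chekanov-type tori the combinatorial type of the disk count is shown to be unchanged in \cite[Proposition~5.8, Corollary~5.13]{Au07}. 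Thus the first step is simply to transcribe those results into the homology-class notation $\cH-2\beta-\alpha$ etc.\ of Table~\ref{tab: Disks}, checking that the number of disks through a generic point and the boundary classes in $H_1(T_a;\Z)$ match. The areas are then pinned down by pairing the listed homology classes with $\omega$ under our normalisation $\omega(\cH)=1$ (resp.\ $\homega(\cH_1)=\homega(\cH_2)=1$) together with the monotone case: a class of the form $\cH-2\beta-(\text{correction by }\alpha)$ has area $a$ exactly when $\gamma_a$ encloses a region of the prescribed area, and the section class $\beta$ then carries the complementary area $(1-a)/2$ in $\CP^2$, $1-a$ in $\PxP$, by $\mu=2$ monotonicity bookkeeping on the monotone member and linearity in $a$.

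The second, more delicate step is \emph{regularity}. For the integrable $J_{\std}$ the disks in question are, after the biholomorphism removing the divisor, restrictions of rational curves / linear systems to a totally real torus, and automatic-regularity arguments apply: each such disk is simple (its boundary class is primitive or a prescribed small multiple, and one checks no multiple covers occur at Maslov~$2$ in dimension~$4$), and holomorphic with respect to an integrable structure with the correct first Chern number, so the relevant linearised Cauchy--Riemann operator is surjective by the Riemann--Roch / automatic transversality criterion in real dimension~$4$ (Maslov index~$2$ disks on a surface are always regular, cf.\ the argument recalled in the proof of Lemma~\ref{lem:OC_invt} via \cite{KO00,La00}). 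Concretely, I would invoke \cite[Proposition~5.8, Corollary~5.13]{Au07} for the families, and for the monotone endpoints \cite{ChSch10}; the Maslov index~$2$ disks on Fano surfaces with the standard complex structure are classical and regular. Finally one has to confirm that these are \emph{all} Maslov~$2$ disks: this follows because the total count of Maslov~$2$ disks through a point is a symplectic invariant (the number is read off the superpotential, equivalently the disk potential of the monotone member computed by Chekanov--Schlenk), and the areas force any hypothetical extra disk to have a homology class already listed.

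The main obstacle I anticipate is the orientation/spin-structure bookkeeping: the statement asserts the counts ``$\#$'' in Table~\ref{tab: Disks} are computed with the \emph{standard} spin structure on the torus, and the signs (which matter for the superpotential terms, e.g.\ the multiplicity $2$ on $\cH-2\beta$ versus the single disks $\cH-2\beta\pm\alpha$) must be consistent with the conventions of \cite{FO312,Wu15}. Reconciling Auroux's count of disks on Chekanov-type tori (done over $\Z/2$ or up to sign in \cite{Au07}) with the signed count of \cite{FO312,Wu15} requires choosing the spin structure for which the wall-crossing / superpotential matches, and verifying the claimed monomials $t^a z^{-2}w^{\pm1}$, $t^a z^{-2}$ (with coefficient $2$), $t^{(1-a)/2}z$ are exactly what the standard spin structure produces. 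I would handle this by citing Wu~\cite[Section~3]{Wu15} for $\CP^2$ and \cite[Theorem~3.3 and its proof]{FO312} for $\PxP$, where precisely this signed count is carried out, and noting that the Chekanov-type presentation of Remark~\ref{rem:def_Ta_Auroux} is Hamiltonian isotopic to theirs, so the enumerative invariants (with a fixed spin structure) agree. Everything else is then a matter of matching notation, which I would not grind through in detail.
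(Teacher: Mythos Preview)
Your proposal is essentially correct and matches the paper's own treatment: the proposition carries a \qed\ and is not proved in the paper beyond the preceding paragraph, which cites exactly the sources you invoke (\cite{FO312} and \cite{Wu15} for the direct computations, and \cite{ChSch10} plus \cite[Proposition~5.8, Corollary~5.13]{Au07} for the alternative route via the Chekanov-type presentation of Remark~\ref{rem:def_Ta_Auroux}); the sign issue is handled in the paper by a separate remark pointing to \cite{Cho04} and \cite{Vi13}, in the same spirit as your final paragraph.

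One small caution: your aside that ``Maslov index~2 disks on a surface are always regular'' via an automatic-transversality principle is not a theorem in this generality, and the reference to \cite{KO00,La00} concerns decomposition of non-simple disks rather than regularity. You should rely directly on the regularity statements proved in \cite{Au07,FO312,Wu15} for the specific complex structures used there, as you in fact do in the next sentence.
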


\begin{remark}
 The fact that all disks contribute with positive signs is an argument 
 analogous to \cite[Proposition~8.2]{Cho04} --  
 see also \cite[Section~5.5]{Vi13} for a similar discussion. 
\end{remark}

\subsection{Proof of Theorem \ref{th:T_a}}
We now have all the ingredients to prove Theorem~\ref{th:T_a} using
Theorem~\ref{th:CO}. 
Take the almost complex structure $J$ from Proposition~\ref{Prop: Disks T_a}, then the parameter $a$ indexing the torus $T_a\subset\CP^2$ satisfies Equation~\eqref{eq:defn_a_dim4} whenever $a<1/3$. 
Let $\{D_i\}_i\subset (\CP^2,T_a)$ be the images of all
$J$-holomorphic Maslov index 2 disks of area $a$ such that $p \in \bd D_i$, for
a fixed point $p \in T_a$. 
We work over the coefficient ring $Q=\Z /8$. According to Table~\ref{tab: Disks},
$$
\sum_i \bd [D_i]= - 8\cdot \bd\beta = 0 \in H_1(T_a; \Z/8).
$$ 
Moreover, according to Table~\ref{tab: Disks} we have

\begin{equation} \label{eq: CP2OC_Ta}
 \OC\clow([p_{T_a}]) = 4\cH \in H_2(\CP^2; \Z/8). 
\end{equation} 
Note that the next to the least area $A$ from Equation~(\ref{eq: def A}) equals $A = (1-a)/2$. 

 Let us move to the Clifford torus. It is well known that
the monotone Clifford torus $T_{\Cl}$ bounds three Maslov index 2
$J$-holomorphic disks passing through a generic point, belonging to classes of
the form $\beta_1$, $\beta_2$, $\cH - \beta_1 - \beta_2\in H_2(\CP^2,T_\Cl;\Z)$
(and counting positively with respect to the standard spin structure on the
torus) \cite{CO06}, see also \cite[Proposition~5.5]{Au07}, and having area $b =
1/3$. So we obtain
$$ 
\OC\c([p_{T_\Cl}]) = \cH \in H_2(\CP^2; \Z/8).
$$
\begin{proof}[Proof of Theorem \ref{th:T_a}]
	Since 
	$$\OC\clow([p_{T_a}]) \cdot \OC\c([p_{T_\Cl}]) = 4 \ne 0 \mod 8,$$ we are in shape to
	apply Theorem \ref{th:CO}, provided that:
	$$
	a + b = a + 1/3 < A = \tfrac{1-a}{2}
	$$
	~i.e.~$a < 1/9$. The case $a = 1/9$ follows by continuity. 
\end{proof}

\begin{remark}
\label{rem:not_works_for_conj}
We are unable to prove that the tori $T_a$ are non-displaceable  using Theorem~\ref{th:CO} because
$
\OC\clow([p_{T_a}])\cdot \OC\clow([p_{T_a}]) = 16 \equiv 0\mod 8.
$ 
\end{remark}

\begin{remark}
\label{rem:not_works_over_Z}
It is instructive to see why the argument cannot be made to work over $\C$ or $\Z$. Then $\sum_i \bd [D_i]= - 8\cdot \bd\beta$ is non-zero, but this can be fixed by introducing a local system $\rho\co \pi_1(T_a)\to\C^\x$ taking $\alpha\mapsto -1$, $\beta\mapsto +1$. By definition, $\rho$ is multiplicative, so for example, $\rho(\alpha+\beta)=\rho(\alpha)\rho(\beta)$. Then $\sum_i\rho(\bd[D_i])\cdot \bd[D_i]$ equals
$$
-(-2\bd\beta-\bd\alpha)+2(-2\bd\beta)-(-2\bd\beta+\bd\alpha)=0\in H_1(T_a;\C).
$$
However, in this case $\OC\clow([p_{T_a}];\rho)=\sum_i\rho(\bd [D_i])[D_i]$
vanishes in $H_2(\CP^2;\C)$, because the $H$-classes from Table~\ref{tab: Disks}
cancel in this sum. \end{remark}

\subsection{Similar theorems for $\PxP$ and $\BlIII$} 
Using our technique, we
can prove a similar non-displaceability result inside $\PxP$, which is probably
less novel, and $\BlIII$, both endowed with a monotone symplectic form. We start 
with $\PxP$.

\begin{theorem}
\label{th: Ta in PxP}
	For each $a\in (0,1/4]$, the torus $\T_a \subset \PxP$ is Hamiltonian
	non-displaceable from the monotone Clifford torus $T_\Cl \subset \PxP$.  
\end{theorem}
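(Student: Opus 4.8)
The plan is to derive Theorem~\ref{th: Ta in PxP} from Theorem~\ref{th:CO_groups} applied over $\k = Q = \Z/2$ to the pair $L = \T_a$, $K = T_\Cl$, reading off all disk data from Table~\ref{tab: Disks} and Proposition~\ref{Prop: Disks T_a} together with the classical enumeration of Maslov-index-$2$ disks on the monotone Clifford torus \cite{CO06}. This is the exact analogue of the proof of Theorem~\ref{th:non_displ_spheres}, with $\T_a$ playing the role of a torus lying near the anti-diagonal Lagrangian sphere $\Sa \subset \PxP$ (whose class is $\cH_1 - \cH_2$) and $T_\Cl$ the partner Lagrangian; because $a$ need not be small, however, I cannot invoke Lemmas~\ref{lem:cotangent_bundles} and~\ref{lem:disks_in_nbhood} and must work with Table~\ref{tab: Disks} directly, which is valid for all $a \in (0,1)$.

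First I would fix the regular $J$ of Proposition~\ref{Prop: Disks T_a} and carry out the area bookkeeping. From the table $\mu(\cH_i) = 4$, $\mu(\beta) = 2$, $\mu(\alpha) = 0$, $\homega(\alpha) = 0$ and $\homega(\beta) = 1-a$, so a topological Maslov-index-$2$ class $m_1\cH_1 + m_2\cH_2 + k\alpha + n\beta$ satisfies $n = 1 - 2(m_1+m_2)$ and has area $(1-a) + (m_1+m_2)(2a-1)$; for $a \le 1/4 < 1/2$ the two smallest positive values of this expression are $a$ and $1-a$, so the minima of \eqref{eq:defn_a_dim4} and \eqref{eq: def A} exist and equal $a$ and $A = 1-a$. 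Since $T_\Cl$ is monotone we have $B = +\infty$, and $b = 1/2$ is the common area of its four Maslov-index-$2$ disks, in the classes $\beta_1$, $\cH_1 - \beta_1$, $\beta_2$, $\cH_2 - \beta_2$. Hence $a + b < \min(A,B)$ is equivalent to $a < 1/4$, with the endpoint $a = 1/4$ to be handled at the end by the standard continuity argument.

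Next I would compute the low-area string invariants over $\Z/2$. On $\T_a$ take $S_{\T_a} = \langle \partial\beta\rangle \subset H_1(\T_a;\Z/2)$: the four area-$a$ disks of Table~\ref{tab: Disks} have $\Z/2$-boundaries $\partial\alpha + \partial\beta$, $\partial\beta$, $\partial\beta$, $\partial\alpha + \partial\beta$, so each coset of $S_{\T_a}$ contains two of these disks, whose boundaries sum to $0$; thus Condition~\eqref{eq:disk_groups_cancel_bdies} holds, the two disks with boundary in $S_{\T_a}$ are $\cH_1 - \beta$ and $\cH_2 - \beta$, and summing their classes gives $\OC\clow([p_{\T_a}], S_{\T_a}) = \cH_1 + \cH_2 \in H_2(\PxP;\Z/2)$ (the $2\beta$ vanishes mod $2$). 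On $T_\Cl$ take $S_{T_\Cl} = \langle \partial\beta_1\rangle$; the same check applies and the selected disks $\beta_1$, $\cH_1 - \beta_1$ yield $\OC\c([p_{T_\Cl}], S_{T_\Cl}) = \cH_1 \in H_2(\PxP;\Z/2)$. Both tori are nullhomologous, so $[\T_a] = [T_\Cl] = 0$ and these classes are well-defined. Since $(\cH_1 + \cH_2) \cdot \cH_1 = 1 \ne 0$ in $\Z/2$, Theorem~\ref{th:CO_groups} gives that $\T_a$ is non-displaceable from $T_\Cl$ for $a < 1/4$; the case $a = 1/4$ follows because displaceability is open along the continuous family $\{\T_a\}$.

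The only genuinely delicate point — and the place where the argument could fail — is the choice of coefficient ring and affine subspaces. Over the coarser ring $\Z/4$, which is the largest over which the total boundary $-4\partial\beta$ of the area-$a$ disks still vanishes, one gets $\OC\clow([p_{\T_a}]) = 2\cH_1 + 2\cH_2$ and $\OC\c([p_{T_\Cl}]) = \cH_1 + \cH_2$, whose product is $4 \equiv 0$; and over $\Z/2$ without a group refinement on $T_\Cl$ one gets $(\cH_1 + \cH_2)\cdot(\cH_1 + \cH_2) = 2 \equiv 0$. So one must pass to $\Z/2$ and split the disks into groups on \emph{both} tori simultaneously (the refinement on $\T_a$ being the analogue of the one in Lemma~\ref{lem:cotangent_bundles}(i)). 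Everything else — regularity, existence of the minima, and independence of the invariants from the auxiliary choices via Lemma~\ref{lem:OC_invt} — is immediate from the inputs.
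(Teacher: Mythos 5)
Your proposal is correct and follows essentially the same route as the paper: Theorem~\ref{th:CO_groups} over $\k=Q=\Z/2$ with $S_{\T_a}=\langle\partial\beta\rangle$ and a rank-one subspace on the Clifford torus, after observing that the unrefined $\Z/4$ computation gives $4\equiv 0$; your area bookkeeping ($A=1-a$, $b=1/2$, $B=+\infty$, hence $a<1/4$) and the intersection number $(\cH_1+\cH_2)\cdot\cH_1=1$ match the paper's $(\cH_1+\cH_2)\cdot\cH_2=1$ up to the immaterial swap of $\beta_1$ for $\beta_2$. Your explicit continuity argument for the endpoint $a=1/4$ is a welcome touch that the paper leaves implicit.
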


\begin{remark}  We believe this theorem can be obtained by a short elaboration
on~\cite{FO312}: for the bulk-deformation $\bb$ used in \cite{FO312}, there
should exist local systems (which in this context are weak bounding cochains
\cite[Appendix~1]{FO312}) on $\T_a$ and $T_\Cl$ such that $HF^{\bb}(\T_a,T_\Cl)
\ne 0$, for $a \in (0, 1/2]$. Alternatively, in addition to
$HF^{\bb}(\T_a,\T_a)\neq 0$ as proved in \cite{FO312}, one can show that
$HF^{\bb}(T_\Cl,T_\Cl)\neq 0$ for some local system, and there should be a
bulk-deformed version of Theorem~\ref{th:OCCO_monot} using the unitality of the
string maps and the semi-simplicity of the deformed quantum cohomology
$QH^{\bb}(\CP^2)$. Our proof only works for $a\le 1/4$, but is based on much
simpler transversality foundations. \end{remark}

As a warm-up, let us try to apply Theorem~\ref{th:CO}; we shall work over $\Z/4$. By looking at Table~\ref{tab: 
Disks}, we see that for $a < 1/2$ we have
	 \begin{equation} \label{eq: PxP:OC_Ta}
	   \OC\clow([p_{\T_a}]) = 2(\cH_1 + \cH_2) \in H_2(\PxP; \Z/4),
	 \end{equation}
	and $ A = 1 -a$.
	One easily shows that 
	$$\OC\c([p_{\T_\Cl}]) = \cH_1 + \cH_2 \in H_2(\PxP; \Z/4),$$ 
 since the Clifford torus bounds holomorphic Maslov index 2 disks of area $b =
 1/2$, passing once through each point of $\T_\Cl$, in classes of the form
 $\beta_1$, $\beta_2$, $\cH_1 - \beta_1$, $\cH_2 - \beta_2$ (and counting
 positively with respect to the standard spin structure on the torus)
  \cite{CO06}, see also
 \cite[Section~5.4]{Au07}. We cannot directly apply Theorem~\ref{th:CO} because
	$$\OC\clow([p_{\T_a}]) \cdot \OC\c([p_{\T_\Cl}]) = 4 \equiv 0 \mod 4.$$ 
Hence we need to use the more refined Theorem \ref{th:CO_groups}. 	
\begin{proof}[Proof of Theorem \ref{th: Ta in PxP}] 
 
Consider $S_{\T_\Cl} \subset H_1(\T_\Cl; \Z/2)$ to be the linear space generated
 by $[\del \beta_2]$ and $S_{\T_a} \subset H_1(\T_a; \Z/2)$ generated by $\del
 \beta$; both satisfy Condition~ \eqref{eq:disk_groups_cancel_bdies} over $\k=Q=\Z/2$. So we have:
 
 \begin{equation} \label{eq: PxP:OC_Ta_S}
   \OC\clow([p_{\T_a}],S_{\T_a}) = \cH_1 + \cH_2 \in H_2(\PxP; \Z/2),
 \end{equation}
	
\begin{equation}
  \OC\c([p_{\T_\Cl}],S_{\T_\Cl}) = \cH_2 \in H_2(\PxP; \Z/2),
\end{equation}	
and hence,

$$\OC\clow([p_{\T_a}],S_{\T_a}) \cdot \OC\c([p_{\T_\Cl}],S_{\T_\Cl}) = 1 \neq 0 \mod 2.$$ 
Therefore by Theorem \ref{th:CO_groups}, $\T_a$ is non-displaceable from $\T_\Cl$
provided that $ a + b = a + 1/2 < A = 1 - a$, i.e.~ $a < 1/4$.
\end{proof}

	

Next, we pass on to $\BlIII$ which we see as $\PxP$ blown up at the two points
corresponding to the two top corners of the image of the ``moment map''
$(\hF,\hG)$, see Figure \ref{fig:polytBlIII}. If the blowup is of the correct
size then the resulting symplectic form on $\BlIII$ is monotone; see
\cite[Section 7]{Vi16b} for more details. We denote by $\bar{T}_a$ the tori in
$\BlIII$ coming from the $\T_a \subset \PxP$, in particular, $\bar{T}_a =
L^{1/2}_{1 - a}$ in the notation of \cite[Section 7]{Vi16b}. We also denote by
$\bar{T}_{\Cl}$ the monotone torus corresponding to the baricentre of the
standard moment polytope of $\BlIII$.

\begin{figure}[h]
\includegraphics{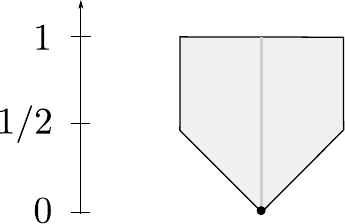}
\caption{The images of the ``moment maps'' on $\BlIII$, 
and the line above which the tori $\bar{T}_a$ are located.
}
\label{fig:polytBlIII}
\end{figure}

\begin{theorem}
\label{th: Ta in BlIII}
	For each $a\in (0,1/4]$, the torus $\bar{T}_a \subset \BlIII$ is Hamiltonian
	non-displaceable from the monotone Clifford torus $\bar{T}_\Cl \subset \BlIII$.
\end{theorem}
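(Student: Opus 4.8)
The plan is to imitate the proof of Theorem~\ref{th: Ta in PxP}, running our non-displaceability criterion for the pair $\bar T_a,\bar T_{\Cl}$. Since $\bar T_{\Cl}$ is monotone, the criterion to invoke is the sharpened Theorem~\ref{thm:alg_cancel_groups}, rather than Theorem~\ref{th:CO_groups}. The reason is that $\BlIII$ contains the exceptional curves $E_1,E_2$, which already give topological Maslov index~$2$ classes in $H_2(\BlIII,\bar T_a)$ of small area, so the topological next-to-least area of \eqref{eq: def A} is too small for Theorem~\ref{th:CO_groups}; however no holomorphic disk on $\bar T_a$ lives in these classes for a good $J$, so the \emph{algebraic} $A$ of Theorem~\ref{thm:alg_cancel_groups} is still large. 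Throughout I would work in the $\PxP$-model $\BlIII=\PxP\#2\overline{\CP^2}$, normalising the monotone form so that $\homega(\cH_1)=\homega(\cH_2)=1$ (hence $\homega(E_i)=1/2$), in line with the conventions for $\T_a$; with this normalisation the monotonicity constant of $\bar T_{\Cl}$ is $b=1/2$.

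First I would record the holomorphic disks. For the monotone toric fibre $\bar T_{\Cl}$ these are the standard Cho--Oh toric-fibre disks: one through a generic point for each of the six facets of the moment hexagon, all of area $b=1/2$. For $\bar T_a=L^{1/2}_{1-a}$, which arises from $\T_a\subset\PxP$ by \cite{Vi16b}, I would fix a tame $J$ on $\BlIII$ making $E_1,E_2$ holomorphic and use the blow-down $\BlIII\to\PxP$: the Maslov index~$2$ holomorphic disks of $\T_a$ listed in Table~\ref{tab: Disks} all avoid the two blown-up corners of the moment triangle, so they lift to disks on $\bar T_a$ in the same classes---four of area $a$, in classes $\cH_1-\beta$, $\cH_2-\beta$, $\cH_1-\beta-\alpha$, $\cH_2-\beta+\alpha$, and one of area $1-a$ in class $\beta$---and there is no holomorphic Maslov index~$2$ disk of area in $(a,a+b)$. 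In particular $\sum_i\bd[D_i^L]=-4\cdot\bd\beta=0\in H_1(\bar T_a;\Z/4)$ and, exactly as in \eqref{eq: PxP:OC_Ta}, $\OC\clow([p_{\bar T_a}])=2(\cH_1+\cH_2)\in H_2(\BlIII;\Z/4)$; moreover the algebraic $A$ equals $1-a$.

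Next, just as in the proof of Theorem~\ref{th: Ta in PxP}, the $\Z/4$ pairing $\OC\clow([p_{\bar T_a}])\cdot\OC\c([p_{\bar T_{\Cl}}])$ vanishes (here $\OC\c([p_{\bar T_{\Cl}}])=2\cH_1+2\cH_2-E_1-E_2$, the sum of the six toric disk classes, and the product equals $8\equiv 0\mod 4$), so I would pass to $\k=Q=\Z/2$ and the refined invariants. With $S_{\bar T_a}=\langle\bd\beta\rangle$ one gets $\OC\clow([p_{\bar T_a}],S_{\bar T_a})=\cH_1+\cH_2$ exactly as in \eqref{eq: PxP:OC_Ta_S}; and for a suitable one-dimensional $S_{\bar T_{\Cl}}\subset H_1(\bar T_{\Cl};\Z/2)$ (grouping the six toric disks according to their boundary classes so that \eqref{eq:disk_groups_cancel_bdies} holds) the class $\OC\c([p_{\bar T_{\Cl}}],S_{\bar T_{\Cl}})$ meets $\cH_1+\cH_2$ in $1\neq 0\in\Z/2$. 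The area inequality $a+b<A$ now reads $a+\tfrac12<1-a$, i.e.\ $a<\tfrac14$, with $a=\tfrac14$ following by continuity as in the proof of Theorem~\ref{th:T_a}; so Theorem~\ref{thm:alg_cancel_groups} applies and gives the conclusion.

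The main obstacle is the holomorphic disk bookkeeping on $\BlIII$, specifically the claim that for a good $J$ no holomorphic Maslov index~$2$ disk on $\bar T_a$ has area in $(a,a+b)$---equivalently, that no such disk meets $E_1$ or $E_2$. I would deduce this from the blow-down correspondence together with the explicit list of disks of $\T_a$ in Table~\ref{tab: Disks}, supplemented if necessary by a neck-stretch around $\bd i(U)$ in the spirit of Lemma~\ref{lem:disks_in_nbhood}(ii), or else cite \cite{Vi16b}. A secondary point is identifying the boundary classes of the six toric disks of $\bar T_{\Cl}$ precisely enough to choose the subspace $S_{\bar T_{\Cl}}$ making the $\Z/2$-intersection non-zero, which is a routine toric computation. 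All the analytic input---regularity of the disks and of the annuli, the cobordism over the conformal parameter, the limit $J_{A_R,n}\to J$---is identical to the proofs of Theorems~\ref{th:CO_groups} and~\ref{thm:alg_cancel_groups}, so nothing new is required there.
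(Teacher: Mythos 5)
Your overall strategy (pass to $\Z/2$, use $S_{\bar T_a}=\langle\del\beta\rangle$, pair $\cH_1+\cH_2$ against a refined invariant of the toric fibre, and invoke the ``algebraic'' area threshold of Theorem~\ref{thm:alg_cancel}/\ref{thm:alg_cancel_groups}) is the same as the paper's, but your disk bookkeeping on $\BlIII$ contains a claim that is false, and it is exactly the point the paper has to deal with. You assert that for a good $J$ the only Maslov index~2 holomorphic disks on $\bar T_a$ are the lifts of the five disks of $\T_a$ from Table~\ref{tab: Disks}, so that no holomorphic disk has area in $(a,a+b)$ and none meets $E_1$ or $E_2$. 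This is not correct: by \cite[Lemma~7.1]{Vi16b} (the very reference you propose to cite), after the blowup the torus $\bar T_a$ acquires \emph{two extra} Maslov index~2 holomorphic disks of area $1/2$, with boundary classes $[\del\alpha]$ and $-[\del\alpha]$, whose classes sum to $\cH_1+\cH_2-E_1-E_2$. So the obstruction is not merely the topological classes $E_i$ in \eqref{eq: def A}; there genuinely are holomorphic disks of area $b=1/2\in(a,a+b)$, and no blow-down correspondence or neck-stretching argument can remove them. Your plan to ``deduce the absence of such disks'' would therefore fail at its main step.

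The repair is precisely what makes the algebraic-cancellation theorems relevant: one accepts the two extra area-$1/2$ disks and observes that over $\k=Q=\Z/2$ their boundaries are both equal to $\alpha$, hence lie in the same affine subspace parallel to $S_{\bar T_a}=\langle\del\beta\rangle$ and their count is $2\equiv 0$. Consequently, in the definition of $A$ in Theorem~\ref{thm:alg_cancel} (or its $S$-refined version) these disks do not contribute, and one may take $A=1-a$ even though the topological/holomorphic next-to-least area is $1/2$. With this correction the rest of your argument goes through unchanged: $\OC\clow([p_{\bar T_a}],S_{\bar T_a})=\cH_1+\cH_2$, a suitable $S_{\bar T_\Cl}$ gives $\OC\c([p_{\bar T_\Cl}],S_{\bar T_\Cl})=\cH_2$, the $\Z/2$-pairing is $1$, and $a+b<A$ becomes $a<1/4$, with $a=1/4$ by continuity.
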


\begin{proof} 
Let $E_1$ and $E_2$ be the classes of the exceptional curves of the above blowups, so that 
$$H_2(\BlIII,\bar{T}_a) = \langle \cH_1,
\cH_2, E_1, E_2, \beta, \alpha \rangle.$$ Compared to Table~\ref{tab: Disks}, the torus $\bar{T}_a$
acquires two extra holomorphic disks of area $1/2$, with boundary in classes
$[\del \alpha]$ and $-[\del \alpha]$, and whose sum gives the class $\cH_1
+ \cH_2 - E_1 - E_2$, see \cite[Lemma 7.1]{Vi16b}.

We then use $S_{\bar{T}_a} \subset H_1(\bar{T}_a; \Z/2)$ generated by $\del
\beta$ and $S_{\bar{T}_\Cl} \subset H_1(\bar{T}_\Cl; \Z/2)$ in a similar
fashion as in the proof of Theorem \ref{th: Ta in PxP}, so that $S_{\bar{T}_a}$,
$S_{\bar{T}_\Cl}$ satisfy Condition~\eqref{eq:disk_groups_cancel_bdies} and
$\OC\c([p_{\T_\Cl}],S_{\T_\Cl}) = \cH_2$. Hence
$$\OC\clow([p_{\bar{T}_a}],S_{\bar{T}_a}) \cdot \OC\c([p_{\bar{T}_\Cl}],S_{\bar{T}_\Cl})
 = (H_1+H_2)\cdot H_2 =1 \mod 2.$$
If one defines $A$ by (\ref{eq: def A}), then $A = b = 1/2$, so
Theorem~\ref{th:CO_groups} does not apply. However, we can use Theorem~\ref{thm:alg_cancel}.
Notice that the boundaries of both disks of area $1/2$ are equal to $\alpha$ over $\Z/2$, and there are two such disks so their count vanishes over $\Z/2$. Therefore in the setup of Theorem~\ref{thm:alg_cancel} we can take $A=
1-a$. So we get the desired non-displaceability result as long as $a+b < 1-a$, i.e.~$a
< 1/4$.
\end{proof}

\begin{remark}
We expect that Theorems~\ref{th: Ta in PxP},~\ref{th: Ta in BlIII} can be improved
so as to allow $a\in (0,1/2]$. Indeed, the tori appearing in those theorems are non-self-displaceable for $a\in(0,1/2]$: see \cite{FO312, FO311b} for the case of $\CP^1\times \CP^1$ and \cite{Vi16b} for the case of
$\BlIII$; and see the previous remark.  
\end{remark}

\subsection{Proof of Lemma~\ref{lem:cotangent_bundles}} Starting with
$X=\CP^1\times \CP^1$ or $X=\CP^2$, remove the divisor $D\subset X$ given by the
preimage of the top side of the triangle in Figure~\ref{fig:polyt} under the
``moment map''. The complement $U$ is symplectomorphic to an open co-disk bundle
inside $T^*S^2$, respectively $T^*\RP^2$. The Lagrangian tori $\hat T_a$ resp.~$
T_a$ are monotone in $U$. Indeed, note that the only disk in $X$ passing through
the divisor $D$ is the one in class $\beta$ (Table \ref{tab: Disks}) -- this can
be seen in any presentation of the tori \cite{Au07,FO312,Wu15}, see again Remark
\ref{rem:def_Ta_Auroux}. 
Monotonicity of the tori follows from noting that
$H_2(U,\T_a; \Q)$, resp. $H_2(U,T_a; \Q)$, is generated by the remaining Maslov
index 2 disks -- which all have the same area $a$ and boundary generate
$H_1(\T_a; \Q)$, resp. $H_1(T_a; \Q)$ -- together with the Lagrangian
zero-section $\Sa$ when $X = \PxP$, which have Maslov index $0$ (recall that
$H_2(T^*\R P^2; \Q) = 0$). Actually these tori differ by scaling inside the
respective cotangent bundle. We denote these tori seen as sitting in the
cotangent bundles by $\hat L_a \subset T^*S^2$ resp.~$L_a\subset T^*\RP^2$.
These are the tori we take for Lemma~\ref{lem:cotangent_bundles}. In the
cotangent bundle, the tori can be scaled without constraint so we actually get a
family indexed by $a\in(0,+\infty)$ and not just $(0,1)$.

As we pointed out, the holomorphic disks of area $a$ from Table \ref{tab: Disks}
are precisely the ones which lie in the complement of $D\subset X$ \cite{FO312,
Wu15}, therefore they belong to $U$. Finally, the tori $\hat L_a$ and $L_a$
bound no holomorphic disks in $T^*S^2$ resp.~$T^*\RP^2$ other than the ones
contained inside $U$, by the maximum principle. Therefore we know all
holomorphic Maslov index 2 disks on these tori, and
Lemma~\ref{lem:cotangent_bundles} becomes a straightforward computation,
that we actually already performed. Indeed, the disks used to compute
$\OC\c$ in $U$ (and hence in $T^*S^2$ resp.~$T^*\RP^2$) are
the same used to compute $\OC\clow$ in $X$, i.e., 
$\OC\clow([p_{\T_a}]) =i_*\OC\c([p_{\hat L_a}])$, resp. 
$\OC\clow([p_{T_a}]) =i_*\OC\c([p_{L_a}])$.

\begin{remark}
  Note that the disks computed in Table \ref{tab: Disks} were with respect to 
  the standard complex structure $J$. Moreover, the divisor $D$ corresponds to 
  the diagonal in $\PxP$ and to a conic in $\CP^2$. 
   In particular, $J$ is cylindrical at infinity
  for $X\setminus D$.
 \end{remark}

Namely, as in the proof of Theorem \ref{th: Ta in PxP}, the holomorphic Maslov index 2 disks
with boundary on $\hat L_a \subset T^*S^2$ satisfy Condition
\eqref{eq:disk_low_area_cancel_bdies} over $\Z/4$, and Equation~\eqref{eq:S2_OC} from Lemma
\ref{lem:cotangent_bundles} follows immediately from \eqref{eq: PxP:OC_Ta}: 
$$
\begin{array}{r}
i_*\OC\c([p_{\hat L_a}]) = \OC\clow([p_{\T_a}]) = 2(\cH_1 + \cH_2) = 
2(\cH_1 - \cH_2)
\\ = 2i_*[S^2] \in
H_2(\PxP; \Z/4),
\end{array}
$$ 
and injectivity of $i_*: H_2(U; \Z/4) \to H_2(\PxP; \Z/4)$, where $i$ is the embedding of $U\subset X$..

Similarly, we can identify $S_{\hat L_a}$ with the $S_{\T_a}$ from proof of
Theorem \ref{th: Ta in PxP}, which satisfies Condition
\eqref{eq:disk_groups_cancel_bdies} over $\k=Q=\Z/2$. Equation~\eqref{eq:S2_OC_groups} from
Lemma \ref{lem:cotangent_bundles} follows immediately from \eqref{eq:
PxP:OC_Ta_S}:
$$
\begin{array}{r}
i_*\OC\c([p_{\hat L_a}],S_{\hat L_a}) = \OC\clow([p_{\T_a}],S_{\T_a}) = \cH_1 + \cH_2 = \cH_1 - \cH_2 
\\
=  i_*[S^2] 
\in H_2(\PxP; \Z/2),
\end{array}
$$
and the injectivity of $i_*: H_2(U; \Z/2) \to H_2(\PxP; \Z/2)$. 

Analogously, Lemma \ref{lem:cotangent_bundles}(ii)
is checked as in the proof of Theorem \ref{th:T_a}, in particular Equation~\eqref{eq:RP2_OC}
 follows from \eqref{eq: CP2OC_Ta}:
$$
i_*\OC\c([p_{L_a}]) = \OC\clow([p_{T_a}]) = 4\cH = i_*[4\R P^2] \in H_2(\CP^2; \Z/8).
$$ 
Indeed,
$i_*$ sends the generator $[4\R P^2]$ of $H_2(T^*\R P^2; \Z / 8) \cong \Z / 2$
to $4\cH\allowbreak \in H_2(\CP^2;$ \linebreak[3] $\Z/8)$.

Finally, we note that these computations are actually valid for $a \in
(0,+\infty)$, as scaling monotone tori in a cotangent bundle does not change the
enumerative geometry of holomorphic disks. \qed


\subsection{The superpotentials} We conclude by an informal discussion of
the superpotentials of the tori we study, aimed to readers familiar to the
notions of the superpotential and bulk deformations. We refer to
\cite{Au07,FO310b,FO312,Wu15} for the definitions. The Landau-Ginzburg
superpotential (further called ``potential'') associated to a Lagrangian 2-torus
and an almost complex structure $J$ is a Laurent series in two variables which
combinatorially encodes the information about all $J$-holomorphic index 2 disks
through a point on $L$.
In the setting of Proposition~\ref{Prop: Disks T_a}, the potentials are given by
\begin{equation}\label{eq:Pot_CP2}
\PO_{\CP^2}  = t^{(1-a)/2}z + \frac{t^a}{z^2w} + 2\frac{t^a}{z^2} + \frac{t^aw}{z^2} = t^{(1-a)/2}z + t^a\frac{(1 +w)^2}{z^2w};
\end{equation}
\begin{equation}\label{eq:Pot_CP1}
\PO_{\PxP}  = t^{1-a}z + \frac{t^a}{zw} + 2\frac{t^a}{z} + \frac{t^aw}{z} = t^{1-a}z + t^a\frac{(1 + 
	w)}{zw} + t^a\frac{(1 +  w)}{z}.
\end{equation}

(These functions are sums of monomials corresponding to the disks as shown in Table~\ref{tab: Disks}.)
Here $t$ is the
formal parameter of the Novikov ring $\Lambda_0$ associated with a ground field $\k$, usually assumed to be of characteristic zero:
$$
\Lambda_0 = \{ \sum a_i t^{\lambda_i}\ | \ a_i \in \k, \ \lambda_i \in \R_{\ge 0}, \
\lambda_i \le \lambda_{i+1}, \ \lim_{i \to \infty} \lambda_i = \infty \}. 
$$

Let
$\Lambda_\x$ be the field of elements of $\Lambda_0$ with nonzero constant term $a_0t^0$.
We can see $(\Lambda_\x)^2$  as the space of local systems $\pi_1(L)\to \Lambda_\x$ on a Lagrangian torus $L$, or \cite[Remark 5.1]{FO310b} as the space $\exp(H_1(L;\Lambda_0))$ of exponentials of elements in $H_1(L;\Lambda_0)$, the so-called bounding cochains from the works of Fukaya, Oh, Ohta and~Ono \cite{FO310,FO310b, FO3Book}.
In turn, the potential can be seen as a function $(\Lambda_\x)^2\to \Lambda_0$, 
and its critical points
correspond to local systems
$\sigma\in (\Lambda_\x)^2$ such that $HF^*(L,\sigma)\neq 0$ \cite[Theorem 5.9]{FO310b}


If the potential has no critical points, it  can sometimes be fixed by  introducing a bulk deformation $\bb \in H^{2k}(X; \Lambda_0)$ which deforms the function; critical points of the deformed potential correspond to local systems $\sigma\in (\Lambda_\x)^2$ such that $HF^\bb(L,\sigma)\neq 0$ \cite[Theorem~8.4]{FO310b}. This was the strategy of \cite{FO312} for proving that the tori $\T_a\subset\CP^1\times\CP^1$ are non-displaceable. When $\bb\in H^{2}(X;\Lambda_0)$, the deformed potential is still determined by Maslov index 2 disks (if $\dim X=2n>4$, this will be the case for $\bb\in H^{2n-2}(X;\Lambda_0)$), see e.g.~\cite[Theorem~8.2]{FO310b}. For bulk deformation classes in other degrees, the deformed potential will use disks of all Maslov indices, and its computation becomes out of reach.

In contrast to the $\T_a$, the potential for the tori $T_a$ does not acquire a critical point after we introduce a degree 2 bulk deformation class
$\bb \in H^2(\CP^2, \Lambda_0)$.

\begin{proposition} \label{prop: Bulk CP^2}
Unless $a=1/3$, for any bulk deformation class $\bb \in H^2(\CP^2, \Lambda_0)$, the deformed potential $\PO^\bb$ for the torus $T_a\subset \CP^2$ has no critical point in $(\Lambda_\x)^2$.
\end{proposition}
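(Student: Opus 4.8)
The plan is to make the bulk-deformed potential completely explicit and then solve its critical point equations over the Novikov field.

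First I would determine the shape of $\PO^\bb$. Since $\bb\in H^2(\CP^2;\Lambda_0)$ has degree $2$, the deformed potential is still computed from Maslov index~$2$ disks (\cite[Theorem~8.2]{FO310b}), so $\PO^\bb$ is obtained from $\PO_{\CP^2}$ in~\eqref{eq:Pot_CP2} by multiplying the monomial attached to a disk class $\gamma\in H_2(\CP^2,T_a)$ by the unit $\exp\langle\bb,\gamma\rangle\in\Lambda_\x$, where $\langle\bb,\gamma\rangle\in\Lambda_0$ is the pairing of $\bb$ with $\gamma$ (intersection of a cycle Poincar\'e dual to $\bb$ with a chain representing $\gamma$). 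Now $H^2(\CP^2;\Lambda_0)$ is one-dimensional, spanned (as $\k$ has characteristic zero) by $\PD[Q]$ for the conic $Q=D$ appearing in the proof of Lemma~\ref{lem:cotangent_bundles}. By that proof, $Q$ is disjoint from $T_a$, the area-$a$ disks (classes $\cH-2\beta+k\alpha$, $k\in\{-1,0,1\}$) lie in $U=\CP^2\setminus Q$, and the only Maslov index~$2$ disk meeting $Q$ is the one in class $\beta$; combined with $[Q]\cdot\cH=2$ and $[Q]\cdot(\cH-2\beta)=0$ this gives $[Q]\cdot\beta=1$. Hence the bulk leaves the area-$a$ monomials untouched and rescales the $\beta$-monomial by a factor $\mathfrak c\in\Lambda_\x$ of valuation $0$ (namely $\mathfrak c=\exp\langle\bb,\beta\rangle$, which is a unit of valuation $0$ for \emph{any} $\bb$), so that
\[
\PO^\bb \;=\; \mathfrak c\,t^{(1-a)/2}z \;+\; t^a\,\frac{(1+w)^2}{z^2 w}.
\]

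Next I would solve for critical points in $(\Lambda_\x)^2$; since $z,w$ are units this is equivalent to $z\,\partial_z\PO^\bb=w\,\partial_w\PO^\bb=0$, i.e.
\[
\mathfrak c\,t^{(1-a)/2}z \;=\; 2\,t^a\,\frac{(1+w)^2}{z^2 w},\qquad\qquad t^a\,\frac{w^2-1}{z^2 w} \;=\;0 .
\]
The second equation forces $w^2=1$, hence $w=\pm 1$. If $w=-1$ then $(1+w)^2=0$ and the first equation reads $\mathfrak c\,t^{(1-a)/2}z=0$, impossible since $\mathfrak c,z\in\Lambda_\x$. If $w=1$ then $(1+w)^2/w=4$ and the first equation becomes $\mathfrak c\,t^{(1-a)/2}z^3=8\,t^a$, i.e.\ $z^3=8\,\mathfrak c^{-1}\,t^{(3a-1)/2}$. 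But $z\in\Lambda_\x$ forces $\mathrm{val}(z^3)=0$, while the right-hand side has valuation $\mathrm{val}(\mathfrak c^{-1})+(3a-1)/2=(3a-1)/2$, which is nonzero precisely when $a\ne 1/3$. Therefore $\PO^\bb$ has no critical point in $(\Lambda_\x)^2$ unless $a=1/3$. (When $a=1/3$ the equation reduces to $z^3=8\mathfrak c^{-1}$, which does have solutions, e.g.\ when $\k=\C$, in agreement with the monotone Chekanov torus $T_{1/3}$ being detected by Floer cohomology.)

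The step that needs the most care is the first one: verifying that the degree-$2$ bulk deformation affects only the $\beta$-monomial and rescales it by a \emph{unit} of valuation $0$. This combines the cited Fukaya--Oh--Ohta--Ono result that a degree-$2$ bulk class deforms the potential only through Maslov index~$2$ disks with the geometric input of Lemma~\ref{lem:cotangent_bundles} (the bulk class restricts trivially to the co-disk bundle $U=\CP^2\setminus Q$, which contains all the area-$a$ disks, and only the class $\beta$ escapes $U$) together with the intersection count $[Q]\cdot\beta=1$. Once $\PO^\bb$ is in the displayed normal form, the remainder is the short computation above.
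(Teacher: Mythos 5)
Your proposal is correct and follows essentially the same route as the paper: identify $\bb$ as a multiple of the Poincar\'e dual of the quadric $Q$, note that only the $\beta$-disk meets $Q$ so the bulk merely rescales the $\beta$-monomial by a unit $e^c$, and then rule out critical points in $(\Lambda_\x)^2$ by the valuation of $z^3=8e^{-c}t^{(3a-1)/2}$. Your only additions are cosmetic --- explicitly discarding the $w=-1$ branch and deriving $[Q]\cdot\beta=1$ from $[Q]\cdot\cH=2$ and $[Q]\cdot(\cH-2\beta)=0$ rather than citing Wu --- so no further changes are needed.
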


\begin{proof}
Let $Q\subset\CP^2$ be the quadric which is the preimage of the top side of the traingle in Figure~\ref{fig:polyt}, so $[Q]=2H$. 
Then $\bb$ must be Poincar\'e dual to $c \cdot [Q]$ for some $c \in \Lambda_0$. Among the holomorphic disks in Table~\ref{tab: Disks}, the only disk intersecting $Q$ is the $\beta$-disk intersecting it once \cite{Wu15}. Therefore the deformed potential
$$
   \PO^{\bb}_{\CP^2} = t^{(1-a)/2}e^c z + t^a\frac{(1 + w)^2}{z^2w} 
  $$
differs from the usual one by the $e^c$ factor by the monomial corresponding to the $\beta$-disk, compare \cite{FO312}. Its critical points are given by
   $$
   w = 1 , \ z^3 = 8t^{(3a-1)/2}e^{-c}. 
   $$
Unless $3a-1=0$, the $t^0$-term of $z$ has to vanish, so $z\notin\Lambda_\x$.
 \end{proof}


Keeping an informal attitude,
let us drop the monomial $t^{(1-a)/2}z$ from Equation~(\ref{eq:Pot_CP2}) of $\PO_{\CP^2}$; denote the resulting function by $\PO_{\CP^2,\low}$. 
For $a < 1/3$, it reflects the information about the least area holomorphic disks with boundary on~$T_a\subset\CP^2$,
\begin{equation}
\label{eq:trunc_po}
\PO_{\CP^2,low} =  t^a\frac{(1 + w)^2}{z^2w}. 
\end{equation}
Now, this function has plenty of critical points. Over $\C$, it has the critical line $w=-1$, and if one works over $\Z/8$ then the point $(1,1)$ is also a critical point, reflecting the fact the boundaries of the least area holomorphic Maslov index 2 disks on $T_a$ cancel modulo 8, with the trivial local system.

The potential (\refeq{eq:trunc_po}) becomes the usual potential for the monotone
tori $L_a\subset T^*S^2$ from Lemma~\ref{lem:cotangent_bundles}. The fact that
it has a critical point implies, this time by the standard machinery, that the
tori $L_a\subset T^*\RP^2$ are non-displaceable \cite[Theorem~2.3]{FO312}  
(note Condition~6.1, Theorem~A.1 and Theorem~A.2 in \cite[Appendix~1]{FO312}), see also \cite{BC12,She13}.
The same is true of the $\hat L_a\subset T^*S^2$ and has been known due to
\cite{AF08}, see also \cite[Appendix~2]{FO312}.


\bibliography{Symp_bib}{}

\begin{thebibliography}{10}

\bibitem{Ab10}
M.~Abouzaid.
\newblock {A geometric criterion for generating the Fukaya category}.
\newblock {\em Publ. Math. Inst. Hautes \'Etudes Sci.}, 112(1):191--240, 2010.

\bibitem{AM13}
M.~Abreu and L.~Macarini.
\newblock {Remarks on Lagrangian intersections in toric manifolds}.
\newblock {\em Trans. Amer. Math. Soc.}, 365:3851--3875, 2013.

\bibitem{AF08}
P.~Albers and U.~Frauenfelder.
\newblock {A nondisplaceable Lagrangian torus in $T^*S^2$}.
\newblock {\em Comm. Pure Appl. Math.}, 61(8):1046--1051, 2008.

\bibitem{Au07}
D.~Auroux.
\newblock {Mirror symmetry and T-duality in the complement of an anticanonical
  divisor}.
\newblock {\em J. G\"okova Geom. Topol.}, 1:51--91, 2007.

\bibitem{Bi01}
P.~Biran.
\newblock {Lagrangian barriers and symplectic embeddings}.
\newblock {\em Geom. Funct. Anal.}, 11(3):407--464, 2001.

\bibitem{BC07}
P.~Biran and O.~Cornea.
\newblock { Quantum Structures for Lagrangian Submanifolds}.
\newblock {\em arXiv:0708.4221}, 2007.

\bibitem{BC09B}
P.~Biran and O.~Cornea.
\newblock {A Lagrangian Quantum Homology}.
\newblock In {\em { New Perspectives and Challenges in Symplectic Field
  Theory}}, volume~49 of {\em {CRM Proc. Lecture Notes}}, pages 1--44. CRM-AMS,
  2009.

\bibitem{BC09A}
P.~Biran and O.~Cornea.
\newblock {Rigidity and uniruling for Lagrangian submanifolds}.
\newblock {\em Geom. Topol.}, 13:2881--2989, 2009.

\bibitem{BC12}
P.~Biran and O.~Cornea.
\newblock Lagrangian topology and enumerative geometry.
\newblock {\em Geom. Topol.}, 16(2):963--1052, 2012.

\bibitem{Bor13}
M.~S. Borman.
\newblock {Quasi-states, quasi-morphisms, and the moment map}.
\newblock {\em Int. Math. Res. Notices}, 11:2497--2533, 2013.

\bibitem{CompSFT03}
F.~Bourgeois, Y.~Eliashberg, H.~Hofer, K.~Wysocki, and E.~Zehnder.
\newblock {Compactness results in Symplectic Field Theory}.
\newblock {\em Geom. Topol.}, 7:799--888, 2003.

\bibitem{CW13}
F.~Charest and C.~Woodward.
\newblock Floer trajectories and stabilizing divisors.
\newblock {\em arXiv:1401.0150}, 2013.

\bibitem{Cha15A}
F.~Charette.
\newblock Gromov width and uniruling for orientable {L}agrangian surfaces.
\newblock {\em Algebr. Geom. Topol.}, 15(3):1439--1451, 2015.

\bibitem{Cha15B}
F.~Charette.
\newblock {Quantum Reidemeister torsion, open Gromov-Witten invariants and a
  spectral sequence of Oh}.
\newblock {\em arXiv:1503.00460}, 2015.

\bibitem{ChSch10}
Y.~Chekanov and F.~Schlenk.
\newblock Notes on monotone {L}agrangian twist tori.
\newblock {\em Electron. Res. Announc. Math. Sci.}, 17:104--121, 2010.

\bibitem{Che96}
Y.~V. Chekanov.
\newblock Lagrangian tori in a symplectic vector space and global
  symplectomorphisms.
\newblock {\em Math. Z.}, 223(4):547--559, 1996.

\bibitem{Cho04}
C.-H. Cho.
\newblock Holomorphic discs, spin structures, and {F}loer cohomology of the
  {C}lifford torus.
\newblock {\em Int. Math. Res. Not.}, (35):1803--1843, 2004.

\bibitem{CO06}
C.-H. Cho and Y.-G. Oh.
\newblock {Floer cohomology and disc instantons of Lagrangian torus fibers in
  Fano toric manifolds}.
\newblock {\em Asian J. Math.}, 10(4):773--814, 2006.

\bibitem{CM07}
K.~Coeliebak and K.~Mohnke.
\newblock {Symplectic hypersurfaces and transversality in Gromov-Witten
  theory}.
\newblock {\em J.~Symplectic Geom.}, 5(3):281--356, 2007.

\bibitem{ElPo93}
Yakov Eliashberg and Leonid Polterovich.
\newblock Unknottedness of {L}agrangian surfaces in symplectic {$4$}-manifolds.
\newblock {\em Internat. Math. Res. Notices}, (11):295--301, 1993.

\bibitem{Ev10}
J.~D. Evans.
\newblock {Lagrangian spheres in Del Pezzo surfaces}.
\newblock {\em J. Topol.}, 3(1):181--227, 2010.

\bibitem{EL15b}
J.~D. Evans and Y.~Lekili.
\newblock {Floer cohomology of the Chiang Lagrangian}.
\newblock {\em Selecta Math.}, 21(4):1361--1404, 2015.

\bibitem{ES16}
J.~D. Evans and I~Smith.
\newblock {Markov} numbers and lagrangian cell complexes in the complex
  projective plane.
\newblock {\em arXiv:1606.08656}, 2016.

\bibitem{FO310}
K.~Fukaya, Y.-G. Oh, H.~Ohta, and K.~Ono.
\newblock {Lagrangian Floer theory on compact toric manifolds, I}.
\newblock {\em Duke Math. J.}, 151(1):23--175, 2010.

\bibitem{FO310b}
K.~Fukaya, Y.-G. Oh, H.~Ohta, and K.~Ono.
\newblock {Lagrangian Floer theory on compact toric manifolds: survey}.
\newblock {\em arXiv:1011.4044}, 2010.

\bibitem{FO3Book}
K.~Fukaya, Y.-G. Oh, H.~Ohta, and K.~Ono.
\newblock {\em {Lagrangian Intersection Floer Theory: Anomaly and
  Obstruction}}, volume~46 of {\em {Stud. Adv. Math.}}
\newblock American Mathematical Society, International Press, 2010.

\bibitem{FO311a}
K.~Fukaya, Y.-G. Oh, H.~Ohta, and K.~Ono.
\newblock {Lagrangian Floer theory on compact toric manifolds II: bulk
  deformations}.
\newblock {\em Selecta Math. (N.S.)}, 17(2):609--711, 2011.

\bibitem{FO311b}
K.~Fukaya, Y.-G. Oh, H.~Ohta, and K.~Ono.
\newblock {Spectral invariants with bulk quasimorphisms and Lagrangian Floer
  theory}.
\newblock {\em arXiv:1105.5123}, 2011.

\bibitem{FO312}
K.~Fukaya, Y.-G. Oh, H.~Ohta, and K.~Ono.
\newblock {Toric Degeneration and Nondisplaceable Lagrangian Tori in $S^2\times
  S^2$}.
\newblock {\em Internat. Math. Res. Notices}, 13:2942â2993, 2012.

\bibitem{FO314}
K.~Fukaya, Y.-G. Oh, H.~Ohta, and K.~Ono.
\newblock {Anti-symplectic involution and Floer cohomology}.
\newblock {\em arXiv:0912.2646}, 2014.

\bibitem{Gad13}
A.~Gadbled.
\newblock On exotic monotone {L}agrangian tori in {$\Bbb{CP}^2$} and {$\Bbb
  S^2\times\Bbb S^2$}.
\newblock {\em J. Symplectic Geom.}, 11(3):343--361, 2013.

\bibitem{Gr85}
M.~Gromov.
\newblock {Pseudo holomorphic curves in symplectic manifolds}.
\newblock {\em Invent. Math.}, 82:307--347, 1985.

\bibitem{KO00}
D.~Kwon and Y.-G. Oh.
\newblock {Structure of the image of (pseudo)-holomorphic discs with totally
  real boundary condition}.
\newblock {\em Comm. Anal. Geom.}, 8(1):31--82, 2000.

\bibitem{La00}
L.~Lazzarini.
\newblock Existence of a somewhere injective pseudo-holomorphic disc.
\newblock {\em Geom. Funct. Anal.}, 10(4):829--862, 2000.

\bibitem{La11}
L.~Lazzarini.
\newblock {Relative frames on $J$-holomorphic curves}.
\newblock {\em J. Fixed Point Theory Appl.}, 9(2):213--256, 2011.

\bibitem{Le95}
E.~Lerman.
\newblock Symplectic cuts.
\newblock {\em Math. Res. Lett.}, 2(3):247--258, 1995.

\bibitem{MD90}
D.~McDuff.
\newblock The structure of rational and ruled symplectic {$4$}-manifolds.
\newblock {\em J. Amer. Math. Soc.}, 3(3):679--712, 1990.

\bibitem{MD11}
D.~McDuff.
\newblock {Displacing Lagrangian toric fibers via probes}.
\newblock In {\em {Low-dimensional and Symplectic Topology}}, volume~82 of {\em
  Proc. Sympos. Pure Math.}, pages 131--161. AMS, 2011.

\bibitem{MD96}
Dusa McDuff.
\newblock From symplectic deformation to isotopy.
\newblock In {\em Topics in symplectic {$4$}-manifolds ({I}rvine, {CA}, 1996)},
  First Int. Press Lect. Ser., I, pages 85--99. Int. Press, Cambridge, MA,
  1998.

\bibitem{OU13}
J.~Oakley and M.~Usher.
\newblock {On Certain Lagrangian Submanifolds of {$S^2\times S^2$} and
  {$\mathbb{C}P^n$}}.
\newblock {\em arXiv:1311.5152}, 2013.

\bibitem{OhtaOno96}
H.~Ohta and K.~Ono.
\newblock Notes on symplectic {$4$}-manifolds with {$b^+_2=1$}. {II}.
\newblock {\em Internat. J. Math.}, 7(6):755--770, 1996.

\bibitem{OhtaOno97}
H.~Ohta and K.~Ono.
\newblock Symplectic {$4$}-manifolds with {$b^+_2=1$}.
\newblock In {\em Geometry and physics ({A}arhus, 1995)}, volume 184 of {\em
  Lecture Notes in Pure and Appl. Math.}, pages 237--244. Dekker, New York,
  1997.

\bibitem{PR14}
L.~Polterovich and D.~Rosen.
\newblock {\em {Function Theory on Symplectic Manifolds}}, volume~34 of {\em
  CRM Monogr. Ser.}
\newblock 2014.

\bibitem{Ritter2016}
A.~Ritter and I.~Smith.
\newblock { The monotone wrapped Fukaya category and the open-closed string
  map}.
\newblock {\em Sel. Math. New Ser.}, To appear, 2016.

\bibitem{SeiBook08}
P.~{S}eidel.
\newblock {\em {Fukaya Categories and Picard-Lefschetz Theory}}.
\newblock European Mathematical Society, Zurich, 2008.

\bibitem{SeiL4D}
P.~Seidel.
\newblock {Lectures on Four-Dimensional Dehn Twists}.
\newblock In {\em {Symplectic 4-Manifolds and Algebraic Surfaces}}, volume 1938
  of {\em {Lecture Notes in Math.}}, pages 231--267. Springer, 2008.

\bibitem{She11}
N.~Sheridan.
\newblock {On the homological mirror symmetry conjecture for pairs of pants}.
\newblock {\em J. Differential Geom.}, 29(2):271--367, 2011.

\bibitem{She13}
N.~Sheridan.
\newblock {On the Fukaya category of a Fano hypersurface in projective space}.
\newblock {\em Publ. Math. Inst. Hautes \'Etudes Sci.}, 2016.

\bibitem{Smi15}
J.~Smith.
\newblock {Floer cohomology of Chiang-type Lagrangians}.
\newblock {\em arXiv:1510.08031}, 2015.

\bibitem{Ta95}
C.~H. Taubes.
\newblock The {S}eiberg-{W}itten and {G}romov invariants.
\newblock {\em Math. Res. Lett.}, 2(2):221--238, 1995.

\bibitem{Ta96}
C.~H. Taubes.
\newblock {${\rm SW}\Rightarrow{\rm Gr}$}: from the {S}eiberg-{W}itten
  equations to pseudo-holomorphic curves.
\newblock {\em J. Amer. Math. Soc.}, 9(3):845--918, 1996.

\bibitem{Ta00Book}
C.~H. Taubes.
\newblock {\em Seiberg {W}itten and {G}romov invariants for symplectic
  {$4$}-manifolds}, volume~2 of {\em First International Press Lecture Series}.
\newblock International Press, Somerville, MA, 2000.
\newblock Edited by Richard Wentworth.

\bibitem{Vi13}
R.~Vianna.
\newblock On exotic {L}agrangian tori in {$\Bbb{CP}^2$}.
\newblock {\em Geom. Topol.}, 18(4):2419--2476, 2014.

\bibitem{Vi16b}
R.~Vianna.
\newblock {Continuum families of non-displaceable Lagrangian tori in
  $(\mathbb{C}P^1)^{2{m}}$}.
\newblock {\em arXiv:1603.02006}, 2016.

\bibitem{Vi16}
R~Vianna.
\newblock {Infinitely many monotone Lagrangian tori in del Pezzo surfaces}.
\newblock {\em arXiv:1602.03356}, 2016.

\bibitem{Wo11}
C.~Woodward.
\newblock {Gauged Floer theory of toric moment fibers}.
\newblock {\em Geom. Funct. Anal.}, 21(3):680--749, 2011.

\bibitem{Wu15}
W.~Wu.
\newblock {On an exotic Lagrangian torus in $\mathbb{C} P^{2}$}.
\newblock {\em Compositio Math.}, 151(7):1372--1394, 2015.

\end{thebibliography}
\bibliographystyle{plain}

\end{document}